\documentclass[11pt,a4paper]{amsart}
\usepackage{mathrsfs}
\usepackage{amssymb,amscd,amsthm,cite,hyperref}
\input xy
\xyoption{all}

\usepackage{ifpdf}
\ifpdf 
  \usepackage[pdftex]{graphicx}
  \DeclareGraphicsExtensions{.pdf,.png,.jpg,.jpeg,.mps}
  \usepackage{pgf}
  \usepackage{tikz}
\else 
  \usepackage{graphicx}
  \DeclareGraphicsExtensions{.eps,.bmp}
  \DeclareGraphicsRule{.emf}{bmp}{}{}
  \DeclareGraphicsRule{.png}{bmp}{}{}
  \usepackage{pgf}
  \usepackage{tikz}
  \usepackage{pstricks}
\fi

\setlength{\textwidth}{16cm}
\evensidemargin0cm
\oddsidemargin0cm

\def\A{\mathscr A}
\def\H{\mathscr H}

\def\ind{\operatorname{ind}}

\def\ord{\operatorname{ord}}
\def\re{\operatorname{Re}}
\def\res{\operatorname{Res}}

\newcommand{\Res}{\mathop{\rm Res}}
\newcommand{\im}{\mathop{\rm Im}}

\def\ch{\operatorname{ch}}
\def\sh{\operatorname{sh}}
\def\cth{\operatorname{cth}}
\def\ctg{\operatorname{ctg}}

\def\Mat{\operatorname{Mat}}
\def\tg{\operatorname{tg}}
\DeclareMathOperator\tr{Tr}

\DeclareMathOperator\Tr{Tr}

 \def\lra{\longrightarrow}

\def\dim{\operatorname{dim}}

\DeclareMathOperator\GL{GL}
\DeclareMathOperator\Mp{Mp}
\DeclareMathOperator\Sp{Sp}

\DeclareMathOperator\End{End}

\DeclareMathOperator\diag{diag}

\newtheorem{proposition}{Proposition}
\newtheorem{theorem}{Theorem}
\newtheorem{corollary}{Corollary}
\newtheorem{lemma}{Lemma}

\theoremstyle{definition}
\newtheorem{remark}{Remark}
\newtheorem{definition}{Definition}

\def\im{\operatorname{Im}}

\title[Local Index Formulae for the Affine Metaplectic Group]{
Local Index Formulae on Noncommutative Orbifolds and Equivariant Zeta Functions for the Affine Metaplectic Group}

\author{Anton Savin}
\address{A. Savin. Peoples' Friendship University of Russia (RUDN University),
6 Miklukho-Maklaya St, Moscow, 117198, Russia}
\email{a.yu.savin@gmail.com} 

\author{Elmar Schrohe} 
\address{E. Schrohe.  Leibniz University Hannover, Institute of Analysis, Welfengarten 1, 30167 Hannover, Germany }
\email{schrohe@math.uni-hannover.de} 

\date{}
\setcounter{tocdepth}{1}

\begin{document}

\maketitle

\begin{abstract} 
We consider the algebra $\A$ of bounded operators on $L^2(\mathbb{R}^n)$ generated by quantizations of isometric affine canonical transformations.
The algebra $\A$ includes as subalgebras 
noncommutative tori of all dimensions and toric orbifolds.
We define the spectral triple $(\A, \H, D)$  with $\H=L^2(\mathbb R^n, \Lambda(\mathbb R^n))$ and the Euler operator $D$, a first order differential operator of index $1$.   
We show that this spectral triple has simple dimension spectrum: For every operator $B$ in the algebra $\Psi(\A,\H,D)$ generated by the Shubin type pseudodifferential operators and the elements of $\A$, the zeta function $\zeta_B(z) = \Tr (B|D|^{-2z})$ has a meromorphic extension to $\mathbb C$ with at most simple poles.
Our main result then is an explicit algebraic expression for the Connes-Moscovici 
cocycle.  
As a corollary we obtain local index formulae for noncommutative tori and toric orbifolds. 
\\[.5ex]
\noindent {\bf 2010 Mathematics Subject Classification:} {58J20, 46L87, 58B34, 13D03}
\end{abstract}

\tableofcontents

\section{Introduction}

In this article we present a local index formula for a spectral triple associated with the affine metaplectic group. As special cases we obtain local index formulae for noncommutative tori of arbitrary dimension and noncommutative toric orbifolds.  We follow the noncommutative geometry approach laid out in the classical paper by Connes and Moscovici \cite{CoMo1}. 
 
The key notion is that  of  spectral triple. A spectral triple  $(\A,\H,D)$ consists of an algebra $\A$, a Hilbert space $\H$, and an unbounded operator   $D$ acting on $\H$. In addition, $\A$ acts on $\H$ by bounded operators, and the commutators $[D,a]$ are bounded for all $a\in \A$. A classical example  is the  spectral triple   
\begin{equation}
\label{dir1}
(C^\infty(M),L^2(M,S),D), 
\end{equation}
where $C^\infty(M)$ is the algebra of smooth functions on a closed smooth Riemannian spin
manifold $M$, $L^2(M,S)$ is the space of $L^2$-sections of the spinor bundle, while  $D$ is the Dirac operator on spinors. Under certain conditions,   spectral triples define  classes in  the Kasparov $K$-homology of $\A$ and one obtains the Chern--Connes character  
$$
\ch(\A,\H,D)\in HP^{*}(\A)
$$
in periodic cyclic cohomology of $\A$. The local index formula of Connes and Moscovici   \cite{CoMo1} expresses this class in terms of  periodic cyclic cocycles on $\A$, which are described in terms of regularized traces on $\A$. 
In the case of the Dirac spectral triple   \eqref{dir1} these regularized traces reduce to the celebrated Wodzicki residue  \cite{Wod2}, and the local formula of Connes and Moscovici gives the classical local index formula, see \cite{Pon4}.  Let us emphasize, however, that to obtain an explicit index formula  in a new situation using Connes' and Moscovici's formula, one has to study  these regularized traces and carry out their explicit computation. For applications of the Connes--Moscovici formula see~\cite{CoMo3,NeTu1,CheHu1,Dab1, SDLSV1,PoWa2,Les3}.
  
Let us now describe the spectral triple under consideration. Denote by $\Mp^c(n)$  the complex metaplectic group (see e.g. Leray \cite{Ler8}, \cite{Fol1,deG1,SaSc2} or Section~\ref{sec-mp}). 
One of many equivalent definitions of this group says that this is the group of all unitary operators acting on $L^2(\mathbb{R}^n)$ equal to quantizations of linear canonical transformations of the cotangent bundle $T^*\mathbb{R}^n$. More generally, if we consider  affine canonical transformations of $T^*\mathbb{R}^n$,  we obtain the affine complex metaplectic group. We set $\A$ to be the algebra of bounded operators on $L^2(\mathbb{R}^n)$ generated by quantizations of {\em isometric} affine canonical transformations. It can be shown that $\A$   has the following generators:
\begin{enumerate}
\item[1)]  Heisenberg-Weyl operators: $u(x)\mapsto e^{ikx-iak/2}u(x+a)$, where $a,k,x\in \mathbb{R}^n$; 
\item[2)]  Shift operators: $u(x)\mapsto u(g^{-1}x)$, where $g\in O(n)$ is an orthogonal matrix;
\item[3)]  Fractional Fourier transforms for $\varphi\in(0,\pi)$ (see e.g.~\cite{BuMa1} or Section \ref{sec-mp}): 
$$
u(x_1,x_2,...,x_n)\mapsto \sqrt{\frac{1-i\ctg \varphi}{2\pi}}\int 
   \exp\left(
        i\left(
         (x_1^2+y_1^2)\frac{\ctg \varphi}2-\frac{x_1y_1}{\sin\varphi} 
        \right) 
       \right) u(y_1,x_2,...,x_n)dy_1
$$       
\end{enumerate}
Generators of the form 1) are quantizations of shifts in $T^*\mathbb{R}^n$, those of the form 2) are quantizations of differentials of orthogonal transformations, while those in 3) are quantizations of rotations in the $(x_1,p_1)$ plane. This algebra includes as subalgebras noncommutative tori of all dimensions and toric orbifolds~\cite{Con1,FaWa1,Walt4,Walt2,Walt3,Walt1,BCHL1,ChLu1}.  
Moreover, we set $\H=L^2(\mathbb{R}^n,\Lambda(\mathbb{R}^n))$ and show that the elements in $\A$  naturally act on the differential forms.
Finally, our operator $D$ is the well-known Euler operator, a differential operator  of index one on $\mathbb{R}^n$, see e.g. Higson, Kasparov and Trout  \cite{HKT1}. 
 
Our first result asserts that this spectral triple satisfies the conditions in the local index theorem of Connes and Moscovici. Using the stationary phase approximation we show that the zeta functions
$$
\zeta_{a,b}(z)=\tr(ab|D|^{-2z}),
$$
where $a\in \A$ and $b$ is a pseudodifferential operator of Shubin type on $\mathbb{R}^n$, admit a meromorphic continuation to $\mathbb{C}$ with simple poles; in other words, the spectral triple has simple dimension spectrum.

Our second result is an explicit algebraic formula for the Connes--Moscovici cocycle. We express this cocycle as a sum of contributions over the fixed point sets of the canonical transformations. The computation reduces to obtaining (i) equivariant heat trace asymptotics for the quantum oscillator with respect to elements of the affine metaplectic group (bosonic part) and (ii) heat trace asymptotics for operators given by Clifford products acting on algebraic forms (fermionic part). ?To compute these asymptotics, we use the Mehler formula and Getzler's calculus.
Furthermore, we analyze the Connes--Moscovici periodic cyclic  cocycle and show that it is in fact a sum of cyclic cocycles localized at  conjugacy classes in the group of isometric affine canonical transformations (this group is isomorphic to the semidirect product $\mathbb{C}^n\rtimes U(n)$). 
As applications, we give  explicit index formulae for noncommutative tori and for noncommutative toric orbifolds.
It turns out that noncommutative tori correspond to choosing lattices in $\mathbb{C}^n$, while orbifolds correspond to finite groups acting on such lattices. 

As mentioned above, our algebra $\A$ is generated by quantized canonical transformations acting in $L^2(\mathbb{R}^n)$. Thus, this paper is part of our ongoing project to construct an index theory associated with groups of quantized canonical transformations. 
The articles \cite{SaSchSt4} and \cite{SaSch1} focused on operators on closed manifolds, 
see also Gorokhovky, de Kleijn and Nest \cite{GKN} for related work.
In the recent article \cite{SaSc2} we treated the algebra generated by the metaplectic operators and the Shubin type pseudodifferential operators on $\mathbb R^n$ and obtained an index formula using K-theory.  
Here, in contrast, we study the algebra generated by the affine metaplectic group, define a spectral triple,  and find explicit expressions for the Connes-Moscovici cocycles  associated with it. 
 
\subsection*{Historical notes and relation to previous work} 
The local index formula in  \cite{CoMo1} was extended to twisted spectral triples by Connes and Moscovici, \cite{CoMo2}, \cite{Mos2}.
A conceptually different approach to the local index formula was developed by Carey, Phillips, Rennie and Sukochev in
\cite{CPRS1,CPRS2};  it allowed them to derive the local index formula without the technical condition on the rapid decay of the zeta functions along vertical lines in $\mathbb C$ that Connes and Moscovici had imposed;  see also
 Higson \cite{Hig6} for another derivation.

As for concrete applications, Connes and Moscovici stated the formula for the case of the Dirac operator on a closed spin manifold \cite[Remark II.1]{CoMo1}. 
In \cite{Pon4}, Ponge derived the formula for the Connes-Moscovici 
cocycle for a Dirac spectral triple from his new proof of the local index formula. 
Chern and Hu \cite{CheHu1} and Azmi \cite{Azmi00} computed the corresponding 
expressions for equivariant Dirac operators via heat kernel techniques, however without verifying the technical assumptions  made in \cite{CoMo1}.   
A complete treatment of the equivariant case was eventually given by Ponge and Wang \cite{PoWa2}.  
In \cite{SDLSV1} and \cite{Dab1} van Suijlekom, D\c{a}browski, Landi, Sitarz and Varilly obtained a local index formula for the quantum $SU(2)$.  
For a recent survey on index theory and noncommutative geometry see Gorokhovsky and van Erp \cite{GoErp1}.

Noncommutative tori and noncommutative orbifolds are central and actively researched objects in noncommutative geometry. The local index formula for 
the noncommutative two torus can be found in \cite{Con1}. This was extended recently by Fathizadeh, Luef and Tao \cite{FaLuTa}, who established a local formula for the index of the Dirac operator of a twisted spectral triple on the two torus. However, the local index is given as a number without considering the Connes-Moscovici cocycle. 
Chakraborty and Luef \cite{ChLu1}, building on and partly generalizing work by Echterhoff, L\"uck, Phillips and Walters \cite{Walt2} and Walters \cite{Walt3,Walt1}, 
studied $n$-dimensional noncommutative tori with an action of a finite cyclic group.
They used metaplectic representations and obtained structural and K-theoretical results, but no local index formulae. 
There also is a series of recent articles by Ponge and collaborators concerning pseudodifferential operators and differential geometric objects  on noncommutative two tori, see e.g. \cite{Pon20}. 
In a different vein, Mathai and Rosenberg in \cite{MaRo20} showed a Riemann--Roch theorem and a Hodge theorem for $n$-dimensional complex noncommutative  tori. The proof is via deformation to the commutative case without the use of local index formulae. 
Another interesting development is the pseudodifferential calculus on quantum Euclidean spaces developed by Gao, Junge and McDonald \cite{GaJuDo}, which they use to derive a local index formula for this situation. 
It is not unlikely that this calculus can also be used in the present context. For the purposes of this article, however, the Shubin pseudodifferential calculus combined with the metaplectic operators is a much simpler and completely adequate tool.

\subsection*{Structure of the article} We start by recalling the local index formula by Connes and Moscovici in  Section~\ref{sec1}. We state their result both in terms of zeta functions and heat trace asymptotics. 
In two subsequent sections  we recall necessary information about the metaplectic group and pseudodifferential operators of Shubin type.  Then  we obtain  in Section~\ref{sec2} the local index formula in the one-dimensional case. Section~\ref{sec3} is central to our paper: Here we define our spectral triple in $\mathbb{R}^n$, show that it satisfies the conditions of Connes' and Moscovici's theorem (Theorem~\ref{th34})
and give explicit formulae for the Connes--Moscovici 
cocycle (Theorem~\ref{th-main3}). A decomposition of the 
Connes--Moscovici periodic cyclic cocycle into a sum of cyclic cocycles localized at conjugacy classes in $\mathbb{C}^n\rtimes U(n)$ is obtained in Section~\ref{sec4}, while examples are considered in Section~\ref{sec5}.  In Section~\ref{sec:zeta}, we prove all the necessary results about equivariant zeta functions for Shubin type pseudodifferential operators and metaplectic operators. 

\subsection*{Acknowledgement.} ES thanks Gerd Grubb and Jens Kaad for helpful remarks.
The work of the first author was supported by   
RFBR, project number 21-51-12006; that of the second by DFG through project SCHR 319/8-1. 
We thank the referees for useful suggestions. 

\section{The Local Index Formula of Connes and Moscovici}\label{sec1}

\subsection*{The Chern-Connes character.}

Let $(\A,\H,D)$ be a spectral triple. Here
\begin{itemize} 
\item $\A$ is an algebra; 
\item $\H=\H_0\oplus \H_1$ is a graded Hilbert space with a representation of $\A$ on it by bounded even operators;
\item $D$ is an odd self-adjoint operator on $\H$. 
It is assumed that $D$ is local: $[D,a]$ is bounded for all $a\in \A$ and $(1+D^2)^{-1}$ is compact.
\end{itemize}

Suppose, moreover, that the  spectral triple is $p$-summable, i.e. 
$$
(1+D^2)^{-1/2}\in \mathcal{L}^p(\H)
$$
where $\mathcal{L}^p(\H)=\{T \;|\; T \text{ is compact and }\tr|T|^p<\infty\}$ is the  Schatten von Neumann ideal.
Then one defines the Chern--Connes character of the spectral triple in periodic cyclic cohomology, see  \cite{Con1}, \cite{Hig6}:
$$
\ch(\A,\H,D)\in HP^{ev}(\A).
$$
It contains information about the analytic indices of twisted operators. More precisely, given a projection $P\in \Mat_N(\A)$ over $\A$, we have
$$
\ind (P (D\otimes 1_N): P\H_0^N \lra P\H_1^N)=\langle \ch (\A,\H,D),[P]\rangle,
$$
where $[P]\in K_0(\A)$ is the class of the projection in $K$-theory, while 
$$
 \langle\cdot ,\cdot \rangle: HP^{ev}(\A)\times K_0(\A)\longrightarrow \mathbb{C}
$$ 
stands for the pairing of periodic cyclic cohomology with $K$-theory. 
Let us recall the definition of this pairing. Given a periodic cyclic cocycle $\varphi=(\varphi_{2k})_{k=0,1,...,n}$ over $\A$ and a projection $p\in\Mat_N(\A)$, we have
\begin{equation}\label{spar}
 \langle\varphi,p\rangle=(\varphi_{0}\# \Tr)(p)+ \sum_{k\ge
1} (-1)^k\frac{(2k)!}{k!}(\varphi_{2k}\# \Tr)(p-1/2,p,...,p),
\end{equation}
where 
\begin{equation}\label{spar1}
(\varphi_{2k}\# \Tr)(m_0\otimes a_0,...,m_{2k}\otimes
a_{2k})=\Tr(m_0...m_{2k})\varphi_{2k}(a_0,...,a_{2k}),\quad a_k\in \A,m_k\in \Mat_N(\mathbb C)
\end{equation}
is a $2k+1$-linear functional on $\Mat_N(\A)$.  

\subsection*{The local index formula.}

Connes and Moscovici \cite{CoMo1} (see also Higson \cite{Hig6}) proved 
that the  class   $\ch(\A,\H,D)$ contains a special representative, 
which we call the Connes--Moscovici cocycle. To state their result, we introduce several notions. Let us assume for simplicity that $D$ is invertible (for the noninvertible case see \cite{CoMo1}).

The spectral triple $(\A,\H,D)$ is supposed to be regular, see Definition 3.14 in \cite{Hig6}, i.e. for every $a\in \A$,   
$a$ and $[D,a]$ are in the domains of all iterated commutators 
$$
 [|D|,\cdot],\quad [|D|,[|D|,\cdot]],\quad \ldots. 
$$ 

Given a regular spectral triple, one defines the algebra $\Psi(\A,\H,D)$ as the smallest algebra of linear operators in $\H^\infty=\cap_{j\ge 1}{\rm Dom} |D|^j$ that  contains $\A$ and $[D,\A]$ and is closed under taking commutators with $D^2$: $B\in \Psi(\A,\H,D)$ implies that $[D^2,B]\in \Psi(\A,\H,D)$.

Given $B\in \Psi(\A,\H,D)$, we introduce the zeta function $\zeta_B$ by 
$$
 \zeta_B(z)=\tr(B|D|^{-2z}),
$$
which is defined and holomorphic for $\re z$ sufficiently large.

We say that $(\A,\H,D)$ has simple dimension spectrum, if there is a discrete set $F\subset \mathbb{C}$ such that 
$\zeta_B(z)$ extends meromorphically to $\mathbb{C}$ with at most simple poles in the set $F+\ord B$ for all $B\in \Psi(\A,\H,D)$.

\begin{theorem}\label{th0}
{\bf (Connes-Moscovici)}
Suppose that the spectral triple has simple dimension spectrum. 
Then the Chern--Connes character  $\ch(\A,\H,D)\in HP^{ev}(\A)$ in periodic cyclic cohomology has a representative $(\Psi_0,\Psi_2,\Psi_4,...,\Psi_{2k},...)$, where 
\begin{equation}\label{ht1a}
\Psi_{2k}(a_0,a_1,...,a_{2k}) =\sum_{\alpha}c_{k,\alpha}\res_{z=0}
\tr_s \left(a_0[D,a_1]^{[\alpha_1]}... [D,a_{2k}]^{[\alpha_{2k}]}|D|^{-2(|\alpha|+k+z)}\right),\quad k\ge 1,
\end{equation}
$\alpha=(\alpha_1,\alpha_2,...,\alpha_{2k})$ is a multi-index, $B^{[j]}$ stands for the $j$-th  iterated commutator of the operator $B$ with $D^2$,  and
$$
c_{k,\alpha}=(-1)^{|\alpha|}\frac{\Gamma(|\alpha|+k)}{\alpha!(\alpha_1+1)...(\alpha_1+...+\alpha_{2k}+2k)},
$$
while, see \cite[Remark 5.7]{Hig6}, 
\begin{equation}\label{ht1b}
\Psi_0(a_0)=\res_{z=0}z^{-1}
\tr_s \left(a_0 |D|^{-2 z}\right).
\end{equation}
\end{theorem}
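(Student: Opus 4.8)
The plan is to follow the original argument of Connes and Moscovici, in the streamlined form due to Higson \cite{Hig6}: one produces an explicit representative of $\ch(\A,\H,D)$ in the cyclic $(b,B)$-bicomplex by deforming a known ``global'' representative into the ``local'' form \eqref{ht1a}--\eqref{ht1b}, the pseudodifferential calculus $\Psi(\A,\H,D)$ and the simple dimension spectrum being exactly what is needed to make sense of the residues.

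First I would fix a starting cocycle. Since the triple is $p$-summable, the Chern--Connes character is represented in periodic cyclic cohomology by the Connes cocycle of the finitely summable triple, which one may in turn obtain as a retraction of the JLO (entire) cocycle, whose components are heat-kernel integrals over the standard simplex,
$$
(a_0,\dots,a_m)\longmapsto \int_{\Delta_m}\tr_s\bigl(a_0\, e^{-t_0 D^2}[D,a_1]\, e^{-t_1 D^2}\cdots [D,a_m]\, e^{-t_m D^2}\bigr)\, dt .
$$
Reducing modulo the summability degree yields a finite periodic cyclic cocycle in the class $\ch(\A,\H,D)$; this is the object I want to rewrite.

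Next comes the analytic heart. Using the Mellin transform $e^{-tD^2}=\frac{1}{2\pi i}\int_{\re z=c}\Gamma(z)\,t^{-z}|D|^{-2z}\,dz$ (legitimate since $D$ is invertible), each simplex integral turns into a contour integral whose integrand involves traces of the form $\tr_s\bigl(a_0[D,a_1]\cdots[D,a_m]\,|D|^{-2z}\bigr)$. To evaluate these I would move all the operators to the left of $|D|^{-2z}$ by the ``rearrangement'' procedure: repeatedly apply the identity for $[\,|D|^{-2z},B\,]$ and expand in the iterated commutators $B^{[j]}=[D^2,[\dots,[D^2,B]\dots]]$; in $\Psi(\A,\H,D)$ each such commutator strictly lowers the order and remains in the algebra, so the expansion is asymptotic. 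The simple dimension spectrum hypothesis then guarantees that every resulting zeta-type function $z\mapsto \tr_s(\cdots|D|^{-2z})$ extends meromorphically with at most simple poles, so the contour may be pushed to pick up the residue at $z=0$; after applying $\res_{z=0}$ only finitely many multi-indices $\alpha$ survive. The explicit coefficients $c_{k,\alpha}$ and the shifted power $|D|^{-2(|\alpha|+k+z)}$ emerge precisely from Taylor-expanding $(\lambda+t)^{-z-k}$ in $t$ and integrating the monomials $t_0^{\alpha_1}\cdots t_{2k}^{\alpha_{2k}}$ over $\Delta_{2k}$, which produces the denominators $(\alpha_1+1)\cdots(\alpha_1+\dots+\alpha_{2k}+2k)$ and the factor $\Gamma(|\alpha|+k)$.

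Finally I would check that $(\Psi_0,\Psi_2,\dots)$ is a cocycle and lies in the right class. The cocycle identity $b\Psi_{2k-2}+B\Psi_{2k+2}=0$ follows from the fact that $B\mapsto\res_{z=0}\tr(B|D|^{-2z})$ is a trace on $\Psi(\A,\H,D)$ (hence kills commutators, and in particular kills operators of sufficiently negative order, where the zeta function is already regular at $0$) together with the Leibniz rule for $[D,\cdot\,]$; this is a standard combinatorial verification. That the residue cocycle is cohomologous to the heat-kernel representative of the first step is seen by a transgression/homotopy argument, interpolating the two through the family of truncated expansions (equivalently, rescaling $D\mapsto sD$). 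The main obstacle, and the place where real work is needed, is analytic: one must justify interchanging the simplex integral, the Mellin contour integral and the operator trace, and control the remainders of the asymptotic expansion uniformly in $z$ on vertical strips --- this is exactly the point at which Connes and Moscovici assumed rapid decay of $\zeta_B$ along vertical lines, a hypothesis later removed by Carey--Phillips--Rennie--Sukochev \cite{CPRS1,CPRS2}; the secondary difficulty is the bookkeeping that pins down the exact constants $c_{k,\alpha}$ and confirms that the sum in \eqref{ht1a} is finite.
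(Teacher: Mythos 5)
This theorem is quoted in the paper from Connes--Moscovici \cite{CoMo1} and Higson \cite{Hig6} without an independent proof, and your sketch reproduces precisely the standard argument of those references (JLO/Connes cocycle, Mellin transform, rearrangement into iterated commutators with $D^2$, residues made meaningful by the simple dimension spectrum, transgression to identify the class), so it is essentially the same approach. Only sketch-level slips occur: the $(b,B)$-relation should read $b\Psi_{2k}+B\Psi_{2k+2}=0$, and it is the combination $B^{[j]}|D|^{-2j}$ rather than the iterated commutator $B^{[j]}$ alone whose order decreases in the expansion.
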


\begin{remark}
\label{decay}
Connes and Moscovici additionally required the zeta functions $\zeta_B$ to have rapid decay 
along vertical lines. It was shown by Carey, Phillips, Rennie and Sukochev 
\cite{CPRS1,CPRS2} that this assumption is not needed, see also Higson \cite{Hig6}. 
In the case at hand, rapid decay can be established directly with the help of the weakly parametric calculus of Grubb and Seeley, \cite{GS1}.  
\end{remark}

\subsection*{The local index formula and heat trace asymptotics.}
The following proposition will enable us to apply techniques in local index theory based on heat trace asymptotics.

\begin{proposition}\label{prop2}
Let the assumptions in Theorem~\ref{th0} be satisfied for the spectral triple $(\A,\H,D)$. Suppose in addition that for an operator  $B\in \Psi(\A,\H,D)$ the heat trace $\tr(Be^{-tD^2})$ exists, is a continuous function of $t>0$, is $O(t^{-\infty})$ for $t\to\infty$ and  has an asymptotic expansion 
\begin{equation}\label{heat-eq4}
\tr(Be^{-tD^2}) \sim \sum_{k=0}^\infty a_{m_k} t^{m_k}\quad \text{as }t\to 0^+
\end{equation}
for a sequence of real numbers $m_k\nearrow \infty$. Then  
\begin{equation}\label{res3}
\Res_{z=0} \tr(B|D|^{-2(m+z)})=\frac{a_{-m}}{\Gamma(m)},\qquad \text{whenever }m>0.
\end{equation}
Moreover, for $m=0$,
\begin{equation}\label{res3a}
\Res_{z=0} z^{-1}\tr(B|D|^{-2 z })= a_{0} .
\end{equation} 
\end{proposition}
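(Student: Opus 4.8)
The plan is to connect the heat trace and the zeta function by the Mellin transform. Since $D$ is invertible, $D^2\geq c$ for some $c>0$, so the spectral theorem gives $|D|^{-2s}=\Gamma(s)^{-1}\int_0^\infty t^{s-1}e^{-tD^2}\,dt$ for $\re s>0$; multiplying by $B$ and taking the trace, I would first check that for $\re s$ sufficiently large
\begin{equation*}
\tr(B|D|^{-2s})=\frac{1}{\Gamma(s)}\int_0^\infty t^{s-1}\,\tr(Be^{-tD^2})\,dt .
\end{equation*}
The interchange of trace and integral is legitimate because $Be^{-tD^2}$ is trace class for every $t>0$, the scalar function $\tr(Be^{-tD^2})$ is continuous on $(0,\infty)$, dominated by a fixed power $t^{m_0}$ near $t=0$ by \eqref{heat-eq4}, and $O(t^{-\infty})$ as $t\to\infty$; hence the integral converges absolutely for $\re s>\max(0,-m_0)$.

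Next I would split the integral at $t=1$. The tail $\int_1^\infty t^{s-1}\tr(Be^{-tD^2})\,dt$ is entire in $s$ by the rapid decay of the heat trace. For the part over $(0,1)$, fix $N$ and write $\tr(Be^{-tD^2})=\sum_{k=0}^N a_{m_k}t^{m_k}+R_N(t)$ with $R_N(t)=O(t^{m_{N+1}})$; integrating the explicit terms produces $\sum_{k=0}^N a_{m_k}/(s+m_k)$, while $\int_0^1 t^{s-1}R_N(t)\,dt$ is holomorphic on $\re s>-m_{N+1}$. Since $m_k\nearrow\infty$, letting $N$ grow shows that $\Gamma(s)\tr(B|D|^{-2s})$ has a meromorphic continuation to $\mathbb C$ whose only singularities are simple poles at $s=-m_k$ with residue $a_{m_k}$ (simple because the exponents $m_k$ are pairwise distinct). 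Equivalently, on any half-plane $\re s>-M$,
\begin{equation*}
\tr(B|D|^{-2s})=\frac{1}{\Gamma(s)}\Bigl(\sum_{m_k<M}\frac{a_{m_k}}{s+m_k}+h_M(s)\Bigr)
\end{equation*}
with $h_M$ holomorphic there.

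Both assertions of the proposition then follow by reading off residues at the relevant point. For $m>0$ put $s=m+z$; as $1/\Gamma$ is holomorphic and nonzero at $s=m$, the residue at $z=0$ receives a contribution only from the summand with $-m_k=m$, giving $\Res_{z=0}\tr(B|D|^{-2(m+z)})=a_{-m}/\Gamma(m)$, which is \eqref{res3} (with the convention that $a_{-m}=0$ if $-m$ is not among the exponents). For $m=0$ use $1/\Gamma(z)=z+O(z^2)$ near $z=0$: multiplying the pole term $a_0/z$ by $1/\Gamma(z)$ yields $a_0+O(z)$, while every other summand together with $h_M$ is holomorphic at $0$ and, times $1/\Gamma(z)=O(z)$, contributes $O(z)$; hence $\tr(B|D|^{-2z})=a_0+O(z)$ and $z^{-1}\tr(B|D|^{-2z})$ has residue $a_0$ at $z=0$, which is \eqref{res3a}. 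The only points requiring real care are the trace–integral interchange of the first step and the uniform bookkeeping that lets one enlarge the half-plane of holomorphy of $h_M$ as $N\to\infty$; both rest essentially on the continuity and growth hypotheses imposed on $\tr(Be^{-tD^2})$, and I anticipate no deeper obstacle.
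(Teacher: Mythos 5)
Your argument is correct and follows essentially the same route as the paper: relate $\zeta_B$ to the heat trace via the Mellin transform, split the integral at $t=1$, insert the small-$t$ expansion, and read off the residue at the relevant point (with $1/\Gamma$ handling the $m>0$ versus $m=0$ distinction). You carry out the bookkeeping in somewhat more detail (trace–integral interchange, full meromorphic continuation, explicit $m=0$ case), but the underlying method is identical.
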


\begin{proof}
Let $m>0$. We use the Mellin  transform $M_{t\to z}$  and obtain for $\re z>0$: 
\begin{equation}
\label{mellin2}
|D|^{-2z}=\frac 1{\Gamma(z)}\int_0^\infty t^{z-1} e^{-tD^2}dt, \quad\text{and hence }\quad 
\zeta_B(z)=\frac{1}{\Gamma(z)} M_{t\to z}(\tr(Be^{-tD^2})).
\end{equation}
The Mellin transform is well defined by our assumptions for large $\re z$. Hence
\begin{eqnarray*}\lefteqn{
 \tr(B|D|^{-2(m+z)})=
\frac 1{\Gamma(z+m)}\int_0^\infty t^{z+m-1} \tr(B e^{-tD^2})dt }\\
&\equiv&\
\frac 1{\Gamma(z+m)}\int_0^1 t^{z+m-1} \tr(B e^{-tD^2})dt  
\equiv
\frac 1{\Gamma(z+m)}\sum_{k\;:\; m+m_k< 1} a_{m_k}\int_0^1 t^{z+m+m_k-1} dt , 
\end{eqnarray*}
where $\equiv$ means equality modulo  functions holomorphic for $\re z>-1$. We then conclude that 
\begin{eqnarray*}\Res_{z=0} \tr(B|D|^{-2(m+z)})&=&\Res_{z=0}
\frac 1{\Gamma(z+m)}\sum_{k\;:\; m+m_k< 1} \left. \frac{a_{m_k}t^{z+m+m_k}}{z+m+m_k}\right|^1_0  
=\frac{a_{-m}}{\Gamma(m)} .
\end{eqnarray*}
The case $m=0$ is considered similarly. 
\end{proof}

\begin{remark}
It is possible to use properties of the heat trace and properties of the Mellin transform (see e.g. \cite[Theorems 3 and 4]{FGD1}) in order to obtain the properties of zeta functions listed in Theorem~\ref{th0}. For instance, if we require that $\tr(Be^{-tD^2})$ is smooth, $O(t^{-\infty})$ for $t\to\infty$, and has an asymptotic expansion \eqref{heat-eq4} as $t\to 0$,
then this implies that $\zeta_B(z)$ has a meromorphic continuation to $\mathbb{C}$ and  rapid decay on vertical lines.
\end{remark}

Suppose that all the operators $B= a_0[D,a_1]^{[\alpha_1]} \ldots [D,a_{2k}]^{[\alpha_{2k}]}$ in \eqref{ht1a}
satisfy the assumptions in Proposition~\ref{prop2}. Then we apply Proposition~\ref{prop2} and express the 
Connes--Moscovici cocycle $\{\Psi_{2k}\}$ in terms of heat invariants and get: 
\begin{equation}\label{ht1}
\Psi_{2k}(a_0,a_1,...,a_{2k}) =\sum_{\alpha}d_{k,\alpha} \times \left\{\text{ finite part of } t^{|\alpha|+k}
\tr_s \left(a_0[D,a_1]^{[\alpha_1]}... [D,a_{2k}]^{[\alpha_{2k}]}e^{-tD^2}\right)\right\},
\end{equation}
for all $k\ge 0$, where   
$$
d_{k,\alpha}=\frac{ (-1)^{|\alpha|}}{\alpha!(\alpha_1+1)...(\alpha_1+...+\alpha_{2k}+2k)}.
$$


\section{The Metaplectic Group}\label{sec-mp}

Let us recall a few facts about the  symplectic  and metaplectic groups from \cite{Ler8,deG1,SaSc2}.

\subsection*{The symplectic and the metaplectic groups.}

The metaplectic group $\Mp(n)\subset \mathcal{B}L^2(\mathbb{R}^n)$ is the group generated by unitary operators of the form 
$$
\exp(-i\widehat H)\in \Mp(n),
$$
where $\widehat H$ is the Weyl quantization of a homogeneous real quadratic Hamiltonian
$H(x,p)$, $(x,p)\in T^*\mathbb{R}^n$. In its turn, the complex metaplectic group $\Mp^c(n)$ is similarly
generated by unitaries associated with Hamiltonians $H(x,p)+\lambda$, where $H(x,p)$ is as above, while $\lambda$ is a real constant. Elements of the metaplectic group are called metaplectic operators.

The symplectic group $\Sp(n)\subset \GL(2n,\mathbb{R}) $ is the group of linear canonical transformations of $T^*\mathbb{R}^n\simeq \mathbb{R}^{2n}$, i.e., linear transformations that preserve the symplectic form $dx\wedge dp$. 
The symplectic group is generated by the   canonical transformations equal to the evolution operator for time $t=1$ of the Hamiltonian system 
$$
 \dot x=H_p,\quad \dot p=-H_x,
$$ 
where $H(x,p)$ is a homogeneous real quadratic Hamiltonian as above.

There is a natural  projection $\pi:\Mp(n)\to\Sp(n)$ that takes a metaplectic operator to the corresponding canonical transformation. This projection is  a nontrivial double covering of $\Sp(n)$.    Thus, one can not represent  elements of $\Sp(n)$ unambigously by metaplectic operators. However, it turns out that one can define  a representation of the   subgroup of isometric linear canonical transformations  by operators in the complex metaplectic group. Let us describe this representation.

\subsection*{Isometric linear canonical transformations and their quantization.} 
Consider the maximal compact subgroup $\Sp(n)\cap O(2n)$ of isometric linear canonical transformations. 
It is well known that this intersection coincides with the group $U(n)$  of unitary transformations of $T^*\mathbb{R}^n$ if we introduce the complex structure on $T^*\mathbb{R}^n\simeq \mathbb{C}^n$ via $(x,p)\mapsto z= p+ix$, see~\cite{Arn1}.

Recall that the unitary group is generated by the matrices $\exp( B+iA)$, where $A$ is a symmetric real matrix, while  $B$ is a  skew-symmetric real matrix.
\begin{proposition}
The following mapping is a well-defined  homomorphism of groups
\begin{equation}\label{secc1}
  \begin{array}{ccc}
    R: U(n) & \longrightarrow & \Mp^c(n)\vspace{2mm}\\
     g=\exp( B+iA) & \longmapsto & R_g= \exp(-i\widehat{H})\exp(i\tr A/2) ,
  \end{array}
\end{equation}
where $\widehat{H}$ is the Weyl quantization of the Hamiltonian
$$
 H(x,p)=\frac 1 2(x,p)\left(
                  \begin{array}{cc}
                    A & -B\\
                    B & A\end{array}\right)
\left(  \begin{array}{c}   x\\  p\end{array}\right).
$$  
\end{proposition}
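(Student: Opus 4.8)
\emph{Proof strategy.} The map $R$ is the splitting over the maximal compact subgroup $U(n)\subset\Sp(n)$ of the central extension $1\to U(1)\to\Mp^c(n)\to\Sp(n)\to 1$ (with projection $\pi$), and the phase $\exp(i\tr A/2)$ is there precisely to cancel the two-valuedness of the metaplectic representation. My plan is to build $R$ first on the universal covering group of $U(n)$, where the formula is manifestly unambiguous, and then verify that it descends to $U(n)$.

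First I would settle the infinitesimal picture. Write $H_C$ for the quadratic Hamiltonian in the statement attached to $C=B+iA\in\mathfrak u(n)$ ($B$ real skew-symmetric, $A$ real symmetric). Computing $\dot x=\partial_p H_C$, $\dot p=-\partial_x H_C$ shows that the Hamiltonian vector field $X_{H_C}$ has matrix $\left(\begin{smallmatrix}B&A\\-A&B\end{smallmatrix}\right)$ in the coordinates $(x,p)$, which is exactly the real $2n\times 2n$ matrix representing multiplication by $C$ under the identification $(x,p)\mapsto z=p+ix$. Hence $C\mapsto X_{H_C}$ is the standard inclusion $\mathfrak u(n)\hookrightarrow\liesp(n)$, so $C\mapsto H_C$ carries commutators to Poisson brackets; and since Weyl quantization intertwines the Poisson bracket of quadratic symbols with $\tfrac1i[\,\cdot\,,\cdot\,]$ \emph{with no higher-order correction}, the assignment $C\mapsto -i\widehat{H_C}$ is a Lie algebra homomorphism $\mathfrak u(n)\to\liemp(n)$. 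Moreover $\widehat{H_C}$ is essentially self-adjoint on Schwartz functions, so $\exp(-i\widehat{H_C})\in\Mp(n)$, and by construction of $\pi$ the symplectic map underlying it is the time-one flow of $H_C$, namely $\exp(X_{H_C})=\exp C=g$.

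Next I would integrate. Let $q\colon G\to U(n)$ be the universal covering group, with Lie algebra $\mathfrak u(n)$ and $\ker q\cong\pi_1(U(n))\cong\mathbb Z$. By Lie's second theorem the homomorphism $C\mapsto -i\widehat{H_C}$ integrates to a group homomorphism $\rho\colon G\to\Mp(n)$ with $\rho(\exp_G C)=\exp(-i\widehat{H_C})$, and the linear functional $C\mapsto\tfrac12\tr C=\tfrac i2\tr A$ (recall $\tr B=0$) integrates on the simply connected $G$ to a character $s\colon G\to U(1)$ — a square root of $\det\circ q$ — with $s(\exp_G C)=\exp(i\tr A/2)$. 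Since the image of $s$ is central in $\Mp^c(n)$, the product
\[
\widetilde R:=\rho\cdot s\colon G\longrightarrow\Mp^c(n),\qquad \widetilde R(\exp_G C)=\exp(-i\widehat{H_C})\exp(i\tr A/2),
\]
is again a group homomorphism, and $\widetilde R(\exp_G C)=R_{\exp C}$. So everything reduces to showing that $\widetilde R$ is trivial on $\ker q$.

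This descent is the crux. The kernel $\ker q\cong\mathbb Z$ is generated by $\gamma_0:=\exp_G\!\big(2\pi i\,\diag(1,0,\dots,0)\big)$ (its image loop in $U(n)$ has winding number one under $\det$), so it suffices to check $\widetilde R(\gamma_0)=\mathrm{id}$. With $C_0=2\pi i\,\diag(1,0,\dots,0)$, i.e. $A_0=2\pi\,\diag(1,0,\dots,0)$ and $B_0=0$, the operator $\widehat{H_{C_0}}=2\pi\cdot\tfrac12(-\partial_{x_1}^2+x_1^2)$, tensored with the identity, has spectrum $\{2\pi(m+\tfrac12):m\ge0\}$, so $\exp(-i\widehat{H_{C_0}})=-\,\mathrm{id}$ — exactly the nontrivial deck transformation of $\Mp(n)\to\Sp(n)$ — while the twist contributes $\exp(i\tr A_0/2)=e^{i\pi}=-1$; their product is $\mathrm{id}$. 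Hence $\widetilde R$ kills the generator of $\ker q$, therefore all of it, and descends to a homomorphism $R\colon U(n)\to\Mp^c(n)$ given by the stated formula; in particular $R$ does not depend on the choice of $B,A$ with $g=\exp(B+iA)$. The genuinely delicate point, the one not to skip, is exactly that $\exp(i\tr A/2)$ is the unique correction cancelling the zero-point energy picked up around the generator of $\pi_1(U(n))$; all the rest is the anomaly-freeness of Weyl quantization on quadratics together with formal consequences of Lie's theorems.
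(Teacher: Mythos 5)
Your argument is correct, but note that the paper itself does not prove this proposition: it is recalled as known, with references to Leray, Folland, de Gosson and the authors' earlier paper \cite{SaSc2}, and the text immediately afterwards only records that $R$ is characterized by its values on the generating subgroups $O(n)$ and $U(1)$ (shift operators and fractional Fourier transforms). So there is no in-paper proof to match; the implicit route suggested by the paper is a verification on those generators, whereas you construct the splitting directly on the universal cover of $U(n)$ and check descent. Your three steps are sound: (a) the infinitesimal statement is right -- $H_C(x,p)=\tfrac12(x^TAx+p^TAp)+p^TBx$ has Hamiltonian field with matrix $\bigl(\begin{smallmatrix}B&A\\-A&B\end{smallmatrix}\bigr)$, which is multiplication by $C$ under $z=p+ix$, and Weyl quantization is bracket-exact on quadratics; (b) Lie integration on the simply connected cover together with the central character $\exp(\tfrac12\tr C)$ gives $\widetilde R$; (c) the descent computation at $C_0=2\pi i\,\diag(1,0,\dots,0)$ is correct: $\widehat{H_{C_0}}=2\pi\cdot\tfrac12(x_1^2-\partial_{x_1}^2)$ has spectrum $2\pi(m+\tfrac12)$, so $\exp(-i\widehat{H_{C_0}})=-\mathrm{id}$, cancelled by $e^{i\pi}$; this is consistent with the paper's formula $R_g=e^{i\varphi(1/2-H_1)}$ at $\varphi=2\pi$. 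Two points you gloss over and should make explicit if this were written out in full: the sign bookkeeping in ``commutators to Poisson brackets'' (with the usual conventions both $H\mapsto X_H$ and $M\mapsto$ linear vector field with flow $e^{tM}$ reverse brackets, so the composite claim $\{H_{C_1},H_{C_2}\}=H_{[C_1,C_2]}$ does hold, but it is not immediate from ``$C\mapsto X_{H_C}$ is the standard inclusion'' alone); and the Lie-theoretic input that $\Mp(n)$ is a finite-dimensional Lie group double-covering $\Sp(n)$ whose Lie algebra is realized by the operators $-i\widehat H$, with the group exponential agreeing with the Stone/spectral exponential $e^{-i\widehat H}$ -- standard facts, and ones the paper itself takes as known, so this is a presentational caveat rather than a gap. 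What your approach buys, compared with checking relations among the generators $O(n)$ and $U(1)$, is that well-definedness (independence of the choice of $B,A$ with $g=\exp(B+iA)$) and the homomorphism property come out simultaneously from the descent, without needing a presentation of $U(n)$.
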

Since $U(n)$ is generated by the subgroups $O(n)$ and  $U(1)=\{{\rm diag}(z,1,\ldots,1)\;|\; |z|=1\}$ (see e.g. \cite[Lemma~1]{SaSc2}), it follows that the homomorphism~\eqref{secc1} is characterized by the properties:
\begin{itemize}
\item $R_gu(x)=u(g^{-1}x)$, if $g\in O(n)\subset U(n)$; in this case $R_g$  is the shift operator for an orthogonal matrix $g$
\item $R_gu(x)=e^{i\varphi(1/2-H_1)}u(x)$, if $g={\rm diag}(e^{i\varphi},1,...,1),$ where $H_1=\frac12(x_1^2-\partial_{x_1}^2)$. In this case, the operator $R_g$ is called the fractional Fourier transform with respect to $x_1$ and for $\varphi\in (0,\pi)$ is equal to (see \cite[Corollary 4.2]{H95})
$$
R_gu(x)=\sqrt{\frac{1-i\ctg \varphi}{2\pi}}\int 
   \exp\left(
        i\left(
         (x_1^2+y_1^2)\frac{\ctg \varphi}2-\frac{x_1y_1}{\sin\varphi} 
        \right) 
       \right) u(y_1,x_2,...,x_n)dy_1. 
$$
\end{itemize} 

\section{Shubin Type Pseudodifferential Operators}\label{sec-shubin}
A smooth function $a=a(x,p)$ on $T^*\mathbb R^n$ is a
pseudodifferential symbol (of Shubin type) of order $m\in \mathbb R$, provided its derivatives satisfy the estimates 
$$|D^\alpha_p D^\beta_x a(x,p) |\le c_{\alpha,\beta}(1+|x|+|p|)^{m- |\alpha|- |\beta|}$$
for all multi-indices $\alpha$, $\beta$, with suitable constants $c_{\alpha, \beta}$. 
In this article, we only work with classical symbols where $a$ admits an asymptotic expansion $a\sim \sum_{j=0}^\infty
a_{m-j}$. Here, each $a_{m-j}$ is a symbol of order $m-j$, which is (positively) homogeneous in $(x,p)$ for $|x,p|\ge 1$.

To a symbol $a$ as above we associate the operator $\mathop{\rm op}(a) $ on the Schwartz space $\mathcal S(\mathbb R^n)$,  defined by 
$$\mathop{\rm op}(a)u(x) =(2\pi)^{-n/2} \int e^{ix\cdot p}a(x,p) \widehat u(p) \, dp,$$
where $\widehat u(p) =(2\pi)^{-n/2} \int e^{-ix\cdot p} u(x) \, dx$ is the Fourier 
transform of $u$. Alternatively, we have the Weyl quantization ${\rm op}^w (a)$ 
of $a$ defined by 
$${\rm op}^w (a)(x) = (2\pi)^{-n} \iint e^{i(x-y)\cdot p} a\Big(\frac{x+y}2,p\Big)u(y) \, dydp.$$

The principal symbol $\sigma(A)$ of the operator $A={\rm op}(a)$ is defined as the homogeneous extension of the leading term $a_m$ to $T^*\mathbb R^n\setminus \{0\}$. 

A full calculus for Shubin type pseudodifferential operators, i.e. pseudodifferential operators with such symbols, has been developed in \cite[Chapter IV]{Shu1}. 
We write $\Psi^m(\mathbb R^n)$ for the space of all Shubin type pseudodifferential operators of order $\le m$ and $\Psi (\mathbb R^n)$ for the algebra of all these operators.

A fact we need in several places is that the elements of $\Psi^0(\mathbb R^n)$ extend to bounded operators on $L^2(\mathbb R^n)$ and those of  $\Psi^m(\mathbb R^n)$ 
to trace class operators on $L^2(\mathbb R^n)$ provided $m<-2n$. This is shown in \cite[Theorem 24.3 and Proposition 27.2]{Shu1}. 

Moreover, a Egorov theorem holds: Given an element $S\in \Mp(n)$ and 
$A={\rm op}^w a$ a Weyl-quantized Shubin type pseudodifferential operator  with symbol $a$, then $S^{-1}AS$
is the Weyl-quantized Shubin type pseudodifferential operator with symbol $a\circ \pi(S)$, where $\pi(S)\in \Sp(n)$ is the canonical transformation associated with $S$, see 
\cite[Theorem 7.13]{deG1}. As the principal symbol of the Weyl-quantized operator ${\rm op}^w(a)$ coincides with that of ${\rm op}(a)$, we find in particular that 
$$\sigma (S^{-1}AS) = \sigma(A)\circ \pi(S).$$ 


\section{Operators on $\mathbb{R}$}\label{sec2}
We start with the case  $n=1$, as it is simpler and the results will be useful later on.

\subsection*{The Euler operator.}
We introduce 
\begin{equation}\label{eq-d2}
D=\frac 1{\sqrt 2}  \left(
  \begin{array}{cc}
  0 & x-\partial_x \\
  x+\partial_x & 0
 \end{array}
 \right): 
\mathcal{S}(\mathbb{R},\mathbb{C}^2)\to  \mathcal{S}(\mathbb{R},\mathbb{C}^2),\quad
\text{so that} \quad D^2=\left(
  \begin{array}{cc}
    H -\frac 1 2 & 0 \\
  0 &   H +\frac 1 2 
   \end{array}
 \right),
\end{equation}
where  $H =\frac12(x ^2-\partial_{x }^2)$.  

\subsection*{Heisenberg-Weyl operators.}For $a,k\in\mathbb R$ we define the Heisenberg-Weyl operators $T_{a,k}$ on $L^2(\mathbb R)$ by
$$
T_{a,k} u(x)=e^{ikx-iak/2}u(x-a).
$$
We shall also write $T_{a,k}=T_z$, where $z=a-ik$. These operators generate the Heisenberg group, and we have the product formula
$$
T_{z_1}T_{z_2}=e^{-i\im  z_1\overline{z_2}/2}T_{z_1+z_2}.
$$
We extend the action of the Heisenberg-Weyl operator $T_z$ 
to $\mathcal{S}(\mathbb{R},\mathbb{C}^2)$  by
$(u,v)\mapsto (T_zu,T_zv)$, denoting it by $T_z$ as in the scalar case. Then the following
commutation relations are true
\begin{equation}\label{eq-d3}
[D, T_z]=\frac 1{\sqrt 2}  \left(
  \begin{array}{cc}
  0 & z \\
  \overline z  & 0
 \end{array}
 \right)T_z,
\end{equation}
$$
{}[H,T_z]=\frac{1}{2}\left((k^2-a^2+2ax)T_z-2ik T_z\partial_x\right)=T_z\cdot (\text{operator of order 1}).
$$

\subsection*{Metaplectic operators (fractional Fourier transforms).}
Consider the representation
\begin{equation}\label{u1}
 \begin{array}{ccc}
    R: U(1) & \lra & \Mp^c(1)\vspace{2mm}\\
  e^{i\varphi} & \longmapsto & R_\varphi=e^{i(1/2-  H)\varphi}.
 \end{array}
\end{equation}
We obtain the commutation relation:
 \begin{equation}
\label{cr12q}
R_\varphi T_z R_{\varphi}^{-1}=  T_{e^{i\varphi}z}.
\end{equation}
We also extend the action of the metaplectic operator $R_g$  to $\mathcal{S}(\mathbb{R},\mathbb{C}^2)$  via
$$ 
 \mathbf{R}_\varphi  (u,v)  =(    R_\varphi u, e^{-i\varphi}R_\varphi v).
$$ 
It turns out that $D$ is $U(1)$-equivariant (see \cite{SaSc2} for the proof), i.e. 
\begin{equation}\label{eq-equiv2}
 \mathbf{R}_\varphi D \mathbf{R}_\varphi^{-1}=D.
\end{equation}

\subsection*{Heat asymptotics.}

\begin{proposition}\label{prop1}
We have the following asymptotics as $t\to 0^+:$
\begin{equation}\label{heat1}
\tr(T_zR_{\varphi}e^{-t H})=\left\{
 \begin{array}{ll}
  \displaystyle  O(t^{+\infty}) & \text{ if }\varphi=0, z\ne 0; \vspace{2mm}\\
  \displaystyle   \frac 1{\sqrt 2\sqrt{\ch t-1}}=\frac  1 t +O(1)& \text{ if }\varphi=0, z=0;  \vspace{2mm}\\
   \displaystyle  \frac 1{1-e^{-i\varphi}} 
   \exp
\left(\frac i4(a^2+k^2)\ctg(\varphi/2) \right) +O(t)& \text{ if }\varphi \in (0,2\pi).\\
 \end{array}
\right.
\end{equation}
Here $T_zR_{\varphi}e^{-t H}$ is treated as an operator on $L^2(\mathbb{R})$.
\end{proposition}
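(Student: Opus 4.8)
The plan is to compute the trace $\tr(T_z R_\varphi e^{-tH})$ directly from an explicit integral kernel. The operator $T_z R_\varphi e^{-tH}$ is a composition of three operators whose kernels are known: $e^{-tH}$ is the Mehler kernel for the harmonic oscillator $H = \tfrac12(x^2 - \partial_x^2)$, $R_\varphi = e^{i(1/2-H)\varphi}$ is obtained from the same family by the substitution $t \mapsto i\varphi$ together with the scalar factor $e^{i\varphi/2}$, and $T_z$ acts by $u(x) \mapsto e^{ikx - iak/2}u(x-a)$. So the first step is to write down the kernel $K(x,y)$ of $R_\varphi e^{-tH}$ — equivalently of $e^{-(t+i\varphi)H}$ up to the factor $e^{i\varphi/2}$ — via the Mehler formula
\[
\bigl(e^{-sH}\bigr)(x,y) = \frac{1}{\sqrt{2\pi \sh s}}\exp\!\left(-\frac{(x^2+y^2)\cth s}{2} + \frac{xy}{\sh s}\right),
\]
then compose on the left with $T_z$ to get the kernel $e^{ikx - iak/2}K(x-a,y)$, and finally take the trace by integrating the diagonal $x = y$ over $\mathbb{R}$.

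**Next**, the trace becomes a single Gaussian integral in $x$ (a shifted, complex-coefficient Gaussian), which converges precisely when $\re(\cth s - 1/\sh s) > 0$, i.e. for $s = t + i\varphi$ with $t>0$ and $\varphi \in (0,2\pi)$ — here I would note that $\cth s - 1/\sh s = \tanh(s/2)$, so the relevant quantity is $\re \tanh((t+i\varphi)/2)$, which is positive in this range. Carrying out the Gaussian integration and simplifying using $\sh s = 2\sh(s/2)\ch(s/2)$ and half-angle identities, the prefactor $1/\sqrt{2\pi\sh s}$ combines with the $\sqrt{2\pi/(\cth s - 1/\sh s)}$ from the integral and the factor $e^{i\varphi/2}$ to produce $\frac{1}{1-e^{-i\varphi}} + O(t)$ as $t\to 0^+$; the quadratic form in $(a,k)$ left over in the exponent after completing the square should organize into $\tfrac{i}{4}(a^2+k^2)\ctg(\varphi/2)$, using $z = a - ik$ so that $|z|^2 = a^2+k^2$. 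The $\varphi = 0$, $z=0$ case is just $\tr e^{-tH} = \frac{1}{2\sh(t/2)} = \frac{1}{\sqrt{2}\sqrt{\ch t - 1}} = \frac1t + O(1)$, the classical oscillator partition function. For $\varphi = 0$, $z \neq 0$, the kernel of $T_z e^{-tH}$ on the diagonal gives an integral of the form $\int e^{ikx} e^{-(\text{positive})(x-a/2)^2 + \ldots}\,dx$ whose value is $\exp(-c/t + \ldots)$-type with the leading behaviour forcing $O(t^{+\infty})$ as $t \to 0^+$, reflecting that the shift $T_z$ with $z\neq 0$ moves the operator off the fixed-point set of the identity canonical transformation.

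**The main obstacle** I anticipate is bookkeeping: correctly tracking the branch of the square root $\sqrt{2\pi\sh s}$ and of $\sqrt{1 - i\ctg\varphi}$ (cf. the normalization in the fractional Fourier transform quoted in Section~\ref{sec-mp}) so that the leading coefficient comes out as exactly $\frac{1}{1-e^{-i\varphi}}$ rather than a spurious sign or phase, and verifying convergence of the Gaussian uniformly enough in $t$ near $0$ to justify the asymptotic expansion. Concretely, one must check $\re\bigl(\coth((t+i\varphi)/2)\bigr) > 0$ for all $t > 0$, $\varphi \in (0,2\pi)$, and that the limit $t\to 0^+$ of the regularized integral is the stated value — this is where the stationary phase / Gaussian-integral argument has to be made rigorous rather than formal. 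Once the $n=1$ computation is pinned down, it will serve as the model for the higher-dimensional equivariant heat trace asymptotics via the Mehler formula, as announced in the introduction.
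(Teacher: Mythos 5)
Your proposal is correct and follows essentially the same route as the paper: Mehler's formula for $e^{-sH}$, composition with $T_z$, Gaussian integration along the diagonal, and the identification $R_\varphi e^{-tH}=e^{i\varphi/2}e^{-(t+i\varphi)H}$ with careful tracking of the square-root branch to land on the coefficient $\frac{1}{1-e^{-i\varphi}}$ and the exponent $\frac i4(a^2+k^2)\ctg(\varphi/2)$. The only cosmetic difference is that the paper first evaluates the Gaussian for real $t$ and then substitutes $t\mapsto t+i\varphi$, whereas you integrate directly with the complex parameter $s=t+i\varphi$ after verifying $\re\tanh(s/2)>0$.
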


\begin{proof}
1. In case $\varphi= 0$,  Mehler's formula for the heat kernel
$$
e^{-t H}(x,y)=\frac 1{\sqrt{2\pi \sh t}}\exp\left(-\cth t\frac{x^2+y^2}2+\frac 1{\sh t}xy\right)
$$
shows that 
$$
T_ze^{-t H}(x,y)=\frac {e^{i(kx-ak/2)}}{\sqrt{2\pi \sh t}}\exp\left(-\cth t\frac{(x-a)^2+y^2}2+\frac 1{\sh t}(x-a)y\right).
$$
Hence, the value of the kernel at the diagonal is equal to
$$
T_ze^{-t H}(x,x)=\frac {e^{-iak/2}}{\sqrt{2\pi \sh t}}\exp\left(- x^2\left( \frac{\ch t-1}{\sh t}\right)
+x\left(ik+a\frac{\ch t-1 }{\sh t}\right)-a^2\frac{\ch t}{2\sh t} \right) 
$$
and we obtain\footnote{We use the formula for the Gaussian integration
$$
\int_{\mathbb{R}} e^{-ax^2+bx+c}dx=\sqrt{ \frac \pi a}e^{b^2/4a+c}.
$$}
\begin{equation}\label{heat2}
  \tr(T_z e^{-t H})=\int_\mathbb{R}  T_ze^{-t H}(x,x) dx 
=\frac 1{\sqrt 2}\frac{1}{\sqrt{\ch t -1}} \exp\left(
  -a^2\frac{\ch t+1}{4\sh t}-k^2\frac{\sh t}{4(\ch t -1)}
 \right).
\end{equation}
This readily gives us  the first two lines in \eqref{heat1}.

2. In case $\varphi\ne 0$ we have
$$
T_zR_{\varphi}e^{-t H}=T_z e^{i(1/2- H)\varphi} e^{-t H}=
e^{i\varphi/2}T_ze^{-(t+i\varphi) H}.
$$
We can compute the trace of this operator by replacing $t$ by $t+i\varphi$ in \eqref{heat2}. Then
\begin{equation}\label{heat3}
  \lim_{t\to 0+}\tr(T_z R_{\varphi}e^{-t H}) 
=\frac{e^{i\varphi/2}}{ i\sqrt 2\sqrt{1-\cos\varphi}} \exp
\left(\frac i 4\left(   a^2\frac{1+\cos\varphi}{\sin\varphi}+k^2\frac{\sin\varphi}{1-\cos\varphi} \right)\right),
\end{equation}
where $\varphi\in (0,2\pi)$. Here the argument of the square root is computed using the Taylor expansion at $t=0,\varphi=0$: $\sqrt{\ch (t+i\varphi) -1}\sim \sqrt{ (t+i\varphi)^2/2}=(t+i\varphi)/\sqrt{2}$.  
This expression is equal to the last line in \eqref{heat1}.
\end{proof}

\subsection*{The local index formula.} 
We denote by  $\A$ the algebra generated by the operators $T_z$ and $\mathbf R_g$, $z\in \mathbb C$, $g\in U(1)$, acting on  the Hilbert space $\H= L^2(\mathbb R,\mathbb C^2)$.
Let us compute the Connes--Moscovici   cocycle   of the spectral triple $(\A, \H, D)$ 
defined from the operator $D$. 
It follows from the above commutation relations that 
$$ a_0[D,a_1]^{[\alpha_1]}... [D,a_{2k}]^{[\alpha_{2k}]}|D|^{-2(|\alpha|+k+z)}$$
is an operator of order $\le
|\alpha|-2(|\alpha|+k+\re z)=-|\alpha|-2k-2\re z$.
As operators of order $<-2$ are of trace class, there are only two possibilities to obtain 
a nontrivial contribution to the Connes-Moscovici local index formula in \eqref{ht1a}/\eqref{ht1b} 
namely a) $k=0$  
and  b) $k=1,$ $|\alpha|=0$. This will be important for the proof of the following theorem.

\begin{theorem}
The component $\Psi_0$ of the Connes-Moscovici cocycle is equal to
\begin{equation}\label{fi0}
\Psi_0(T_z\mathbf{R}_\varphi)=\left\{
 \begin{array}{cl}
    0, & \text{ if } \varphi=0,z\ne 0,\\
    1, & \text{ if } \varphi=0,z=0,\\
    \exp \left(\frac i 4(a^2+k^2)\ctg(\varphi/2) \right),  & \text{ if } \varphi\ne 0. \\
 \end{array}
\right.
\end{equation}
Here $z=a-ik$.
The component $\Psi_2$ of the Connes-Moscovici cocycle is equal to
\begin{equation}\label{fi2}
\Psi_2(T_{z_0}\mathbf{R}_{\varphi_0},T_{z_1}\mathbf{R}_{\varphi_1},T_{z_2}\mathbf{R}_{\varphi_2})=\left\{
 \begin{array}{cl}
    0 & \text{ if } \varphi_0+\varphi_1+\varphi_2\ne 0  \\
 & \text{ or } z_0+e^{i\varphi_0}z_1+e^{i(\varphi_0+\varphi_1)}z_2\ne 0,\\
   \displaystyle \frac{e^{i\varepsilon}}4( z_1\overline z_2e^{-i\varphi_1}-\overline z_1z_2e^{i\varphi_1}) & \text{ otherwise},
 \end{array}
\right.
\end{equation}
where $ \varepsilon= \im(e^{i\varphi_0}z_1\overline{z}_0+e^{i\varphi_1}z_2\overline{z}_1+e^{i(\varphi_0+\varphi_1)}z_2\overline{z}_0).$
\end{theorem}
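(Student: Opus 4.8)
The plan is to compute the two components $\Psi_0$ and $\Psi_2$ of the Connes--Moscovici cocycle using the heat-trace reformulation \eqref{ht1}, exploiting the order count already carried out before the theorem statement. For $\Psi_0$ I would apply \eqref{ht1b}/\eqref{res3a}: by Proposition~\ref{prop2} the residue $\res_{z=0}z^{-1}\tr_s(T_z\mathbf{R}_\varphi|D|^{-2z})$ equals the constant term $a_0$ in the small-$t$ expansion of $\tr_s(T_z\mathbf{R}_\varphi e^{-tD^2})$. Here I would use the block-diagonal form of $D^2$ in \eqref{eq-d2}, so that
$$
\tr_s(T_z\mathbf{R}_\varphi e^{-tD^2}) = e^{t/2}\tr(T_zR_\varphi e^{-tH}) - e^{-t/2}e^{-i\varphi}\tr(T_zR_\varphi e^{-tH}),
$$
using the definition $\mathbf{R}_\varphi(u,v) = (R_\varphi u, e^{-i\varphi}R_\varphi v)$ and the fact that $T_z$ acts diagonally. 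Plugging in the three cases of Proposition~\ref{prop1} and extracting the $t^0$-coefficient gives \eqref{fi0}: the $1/t$ singular term cancels in the supertrace when $\varphi=0,z=0$ leaving the constant $1$, the $O(t^{+\infty})$ case gives $0$, and for $\varphi\neq 0$ the leading constant from \eqref{heat1} appears twice with coefficients $1$ and $-e^{-i\varphi}$, whose difference against the prefactor $(1-e^{-i\varphi})^{-1}$ yields exactly the stated exponential. I should note that because $R_\varphi$ is only well-defined for $\varphi\in(0,2\pi)$ as an integral operator, the general element is a product $T_{z}R_\varphi$ with the group law for $U(1)\ltimes\mathbb{C}$ understood, and the cocycle formula extends by the homomorphism property.

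For $\Psi_2$, by the order analysis only the terms with $k=1$, $|\alpha|=0$ contribute (the $k=0$ term belongs to $\Psi_0$), so
$$
\Psi_2(a_0,a_1,a_2) = d_{1,0}\cdot\{\text{finite part of }t\,\tr_s(a_0[D,a_1][D,a_2]e^{-tD^2})\},
$$
with $d_{1,0} = \tfrac16$ from the formula for $d_{k,\alpha}$. I would take $a_j = T_{z_j}\mathbf{R}_{\varphi_j}$, compute the product $a_0[D,a_1][D,a_2]$ using the commutation relations \eqref{eq-d3} (which gives $[D,T_z]$ a factor $\tfrac1{\sqrt2}\left(\begin{smallmatrix}0&z\\\bar z&0\end{smallmatrix}\right)T_z$) together with \eqref{cr12q}, \eqref{eq-equiv2}, and the $U(1)$-equivariance of $D$, collecting everything into a single operator of the form (scalar)$\cdot T_w\mathbf{R}_{\varphi_0+\varphi_1+\varphi_2}$ times a $2\times2$ matrix. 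The trace/supertrace then vanishes unless $\varphi_0+\varphi_1+\varphi_2\equiv 0$ and the net Heisenberg shift $w = z_0 + e^{i\varphi_0}z_1 + e^{i(\varphi_0+\varphi_1)}z_2$ vanishes, by the first line of Proposition~\ref{prop1}; the scalar phase $e^{i\varepsilon}$ is accumulated from the Heisenberg product formula $T_{z_1}T_{z_2} = e^{-i\im z_1\overline{z_2}/2}T_{z_1+z_2}$ applied repeatedly, and the $2\times2$ matrix algebra (products of off-diagonal matrices, then supertrace $\tr_s$ of the result times $e^{-tD^2}$) produces the antisymmetric combination $z_1\bar z_2 e^{-i\varphi_1} - \bar z_1 z_2 e^{i\varphi_1}$. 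The factor $\tfrac1{\sqrt2}$ from each of the two commutators combines with the $\tfrac1t\cdot t$ from the singular heat trace and the prefactor $\tfrac16$ to give the stated $\tfrac14$.

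The main obstacle will be the bookkeeping in the $\Psi_2$ computation: one must carefully track (i) the noncommutative phases arising from moving the metaplectic operators $\mathbf{R}_{\varphi_j}$ past the Heisenberg--Weyl operators via \eqref{cr12q} and past $[D,\cdot]$ via equivariance, (ii) the correct ordering in the triple product so that the residual shift and residual rotation come out as stated, and (iii) the $2\times2$ Clifford/matrix algebra interacting with the grading operator inside $\tr_s$, keeping in mind the asymmetric weights $1$ and $e^{-i\varphi_j}$ on the two components in the definition of $\mathbf{R}_{\varphi_j}$. Verifying that the collection $(\Psi_0,\Psi_2)$ indeed satisfies the $(b,B)$-cocycle identity is automatic from Theorem~\ref{th0} once the hypotheses (simple dimension spectrum, regularity, verified in the cited sections) are in place, so I would not reprove it; the content of this theorem is purely the explicit evaluation.
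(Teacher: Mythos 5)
Your plan follows essentially the same route as the paper: the order count reduces everything to $k=0$ and $k=1,\ \alpha=0$; the supertrace factors as $(e^{t/2}-e^{-i\varphi}e^{-t/2})\tr(T_zR_\varphi e^{-tH})$ for $\Psi_0$; the $\Psi_2$ term is reduced via \eqref{eq-d3}, \eqref{cr12q}, \eqref{eq-equiv2} and the Heisenberg product formula to a scalar matrix factor times $\tr(e^{i\varepsilon'}T_wR_{\varphi_0+\varphi_1+\varphi_2}e^{-tH})$; and the Mehler-based asymptotics of Proposition~\ref{prop1} supply both the vanishing conditions and the nontrivial coefficients. All of this matches the paper's proof.

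There is, however, one concrete error in your constants, and it matters because the whole content of this theorem is the explicit evaluation. From the paper's formula
$d_{k,\alpha}=\frac{(-1)^{|\alpha|}}{\alpha!\,(\alpha_1+1)\cdots(\alpha_1+\cdots+\alpha_{2k}+2k)}$
one gets, for $k=1$ and $\alpha=(0,0)$, the denominator $0!\,0!\cdot(0+1)(0+0+2)=2$, so $d_{1,0}=\tfrac12$, not $\tfrac16$ as you claim. The stated coefficient $\tfrac14$ in \eqref{fi2} arises exactly as $\bigl(\tfrac1{\sqrt2}\bigr)^2\cdot d_{1,0}=\tfrac12\cdot\tfrac12$ times the unit coefficient of $t^{-1}$ in $\tr(T_wR_0e^{-tH})$ when $w=0$ (cf. \eqref{eq-asymp5} and the paper's remark that $\Psi_2$ equals $\tfrac12$ times the $t^{-1}$-coefficient). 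With your value $\tfrac16$ the same bookkeeping would give $\tfrac1{12}$, contradicting the formula you are trying to prove; so as written your final step fails and needs the corrected $d_{1,0}=\tfrac12$. The remaining points (the cancellation giving $\Psi_0=1$ at $\varphi=0,\ z=0$, the cancellation of $1-e^{-i\varphi}$ for $\varphi\neq0$, the accumulation of $e^{i\varepsilon}$, and the antisymmetric combination $z_1\overline z_2e^{-i\varphi_1}-\overline z_1z_2e^{i\varphi_1}$) are correct and coincide with the paper's computation.
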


\begin{proof}
We saw that there is no contribution to the Connes-Moscovici cocycle from terms with 
$\alpha\not=0$. 
  
1. In order to show   \eqref{fi0}, we note that 
\begin{eqnarray*}%
\tr_s (T_z\mathbf{R}_\varphi e^{-tD^2})
&=&
\tr
\left(\begin{array}{cc}
1 & 0\\0 & -1
\end{array}
\right)
T_z
\left(\begin{array}{cc}
R_{\varphi} & 0\\0 & R_{\varphi}e^{-i\varphi}
\end{array}
\right)
\left(
  \begin{array}{cc}
  e^{-t(  H -\frac 1 2)} & 0 \\
  0 & e^{-t(  H +\frac 1 2)} 
   \end{array}
 \right)\\
&=&(e^{t/2}-e^{-i\varphi}e^{-t/2})\tr(T_zR_\varphi e^{-t  H}).
\end{eqnarray*}
According to \eqref{ht1b} and \eqref{res3a} we have to compute the constant term as  $t\to0^+$.
Substituting  the heat asymptotics from Proposition~\ref{prop1} gives precisely \eqref{fi0}.

2. Next let us prove \eqref{fi2}.  A direct computation using \eqref{cr12q}, \eqref{eq-d2}, \eqref{eq-d3}, \eqref{eq-equiv2} shows that
\begin{eqnarray}
\label{fi2a}
\lefteqn{\tr_s (T_{z_0}\mathbf{R}_{\varphi_0}[D,T_{z_1}\mathbf{R}_{\varphi_1}][D,T_{z_2}\mathbf{R}_{\varphi_2}]e^{-tD^2})\nonumber}\\
&=& \frac{1}{2}( z_1\overline z_2e^{-i\varphi_1}e^{t/2}-\overline z_1 z_2 e^{-i(\varphi_0+\varphi_2)}e^{-t/2})\tr (T_{z_0}R_{\varphi_0}T_{z_1}R_{\varphi_1}T_{z_2}R_{\varphi_2}e^{-t  H}))\nonumber\\
&=& \frac{1}{2}( z_1\overline z_2e^{-i\varphi_1}e^{t/2}-\overline z_1 z_2 e^{-i(\varphi_0+\varphi_2)}e^{-t/2})\tr (e^{i\varepsilon'}T_{w}R_{\varphi_0+\varphi_1+\varphi_2}e^{-t  H}),
\end{eqnarray}
where 
$$
 w=z_0+e^{ i\varphi_0}z_1+e^{ i(\varphi_0+\varphi_1)}z_2, \quad
  e^{i\varepsilon'}Id=T_{z_0} T_{e^{ i\varphi_0}z_1} T_{e^{ i(\varphi_0+\varphi_1)}z_2}  T^{-1}_w.
$$ 
By \eqref{ht1a} and \eqref{res3},  
$
\Psi_2(T_{z_0}\mathbf{R}_{\varphi_0},T_{z_1}\mathbf{R}_{\varphi_1},T_{z_2}\mathbf{R}_{\varphi_2})$
equals $1/2$ times the coefficient of $t^{-1}$ in the asymptotics of \eqref{fi2a} as $t\to 0^+$.
By Proposition~\ref{prop1}, this coefficient is zero if either $\varphi_0+\varphi_1+\varphi_2\ne 0 $  or $w=z_0+e^{ i\varphi_0}z_1+e^{ i(\varphi_0+\varphi_1)}z_2\ne 0$. Otherwise, we obtain
\begin{equation}
\label{eq-asymp5}
\tr_s (T_{z_0}\mathbf{R}_{\varphi_0}[D,T_{z_1}\mathbf{R}_{\varphi_1}][D,T_{z_2}\mathbf{R}_{\varphi_2}]e^{-tD^2})= \frac{1}{2}(z_1\overline z_2e^{ -i\varphi_1}-\overline z_1z_2e^{i\varphi_1})e^{i\varepsilon'}t^{-1}+O(1), 
\end{equation}
where 
\begin{multline*}
e^{i\varepsilon'}Id=T_{z_0} T_{e^{ i\varphi_0}z_1} T_{e^{ i(\varphi_0+\varphi_1)}z_2}=
e^{ i\im(\overline{z_0}e^{ i\varphi_0}z_1 )}T_{z_0+e^{ i\varphi_0}z_1}T_{e^{ i(\varphi_0+\varphi_1)}z_2}\\
=
e^{ i\im(\overline{z_0}e^{ i\varphi_0}z_1 )}
e^{ i\im(\overline{z_0+e^{ i\varphi_0}z_1}e^{ i(\varphi_0+\varphi_1)}z_2)}Id=e^{i\varepsilon}Id.
\end{multline*}
Asymptotics~\eqref{eq-asymp5} and Eq.~\eqref{ht1} give the desired expression~\eqref{fi2} for  $\Psi_2$.
\end{proof}

\section{Operators on $\mathbb{R}^n$}\label{sec3}
\subsection*{The Euler operator.}
We introduce the Euler operator 
\begin{equation}\label{euler2s}
D=\frac 1{\sqrt{2}}\left(d+d^*+xdx\wedge +(xdx\wedge)^*\right)
\colon \mathcal{S}(\mathbb{R}^n,\Lambda^{ev}(\mathbb{R}^n)\otimes \mathbb{C})\longrightarrow
\mathcal{S}(\mathbb{R}^n,\Lambda^{odd}(\mathbb{R}^n)\otimes \mathbb{C}).
\end{equation}
Here $d$ is the exterior differential,  $xdx\wedge$ is the operator
of exterior multiplication by $xdx=dr^2/2=\sum_j x_jdx_j$, where   $r=|x|$, while $d^*$ and $(xdx\wedge)^*$
stand for the adjoint operators. Its symbol is invertible for $|x|^2+|p|^2\ne0$.\footnote{Indeed, $\sigma(D)(x,p)=2^{-1/2}[(ip+xdx)\wedge+((ip+xdx)\wedge)^*]$.  Hence,
$\sigma(D)^2(x,p)=2^{-1}(|x|^2+|p|^2)Id.$} 
We consider this operator in the Schwartz spaces of complex-valued differential forms. 
Below, we will use the identification $\Lambda (\mathbb{R}^n)\otimes \mathbb{C}\simeq \Lambda(\mathbb{C}^n)$.
According to \cite[Lemma 14]{HKT1}
\begin{equation}\label{di1}
D^2= H+ F, \quad \text{ where }   H=\frac 1 2 \sum_{j=1}^n\left(-\frac{\partial^2}{\partial x_j^2}+x_j^2\right),\quad    F|_{\Lambda^k}= \left(k-\frac n 2\right)Id.
\end{equation}
\subsection*{Heisenberg-Weyl operators.} Given  $z=a-ik\in\mathbb{C}^n$, where $a,k\in \mathbb{R}^n$, we define the operators
$$
T_z u(x)=e^{ikx-iak/2}u(x-a).
$$
These operators  and $e^{i\varepsilon}$ for all $\varepsilon\in\mathbb{R}$ define the so-called Schr\"odinger representation of the Heisenberg group. We note the following product formula
\begin{equation}
\label{eq-comm1}
T_{z_1}T_{z_2}=e^{-i\im( z_1,z_2)/2}T_{z_1+z_2},\quad \text{ where } (z_1,z_2)=z_1\overline{z_2}.
\end{equation}
The Heisenberg-Weyl operators are extended to the space of forms by the trivial action on the differentials, and this extension is denoted by the same symbol. 

\subsection*{Metaplectic operators.} 
Let $g\in U(n)\mapsto R_g\in\mathcal{B}L^2(\mathbb{R}^n)$ be the unitary representation of $U(n)$ by operators in the complex metaplectic group defined in Section \ref{sec-mp}. 
By Theorem~7.13 in~\cite{deG1}
\begin{equation} 
\label{cr12}
R_g T_z R_g^{-1}=  T_{g z}.
\end{equation}

 The metaplectic operators are also extended to the space of forms by the formula:
\begin{equation}
\label{eq-rg2}
\begin{array}{ccc}
 \mathbf{R}_g:  \mathcal{S}(\mathbb{R}^n,\Lambda (\mathbb{C}^n) ) & \longrightarrow & \mathcal{S}(\mathbb{R}^n,\Lambda (\mathbb{C}^n) )\\
 u\otimes \omega & \longmapsto & \mathbf{R}_g(u\otimes\omega)=R_gu\otimes {g^*}^{-1}\omega,
\end{array}
\end{equation}
where ${g^*}^{-1}: \Lambda (\mathbb{C}^n) \to \Lambda (\mathbb{C}^n)  $
stands for the induced action on forms for $g:\mathbb{C}^n\to\mathbb{C}^n.$ 
 
One has the following properties:
\begin{equation}\label{clifford1}
 [D,T_z]=\frac 1{\sqrt 2} T_z  c(z), \text{ where }c(z)=  \overline zdx\wedge+  z  dx \lrcorner ;
\end{equation} 
\begin{equation}\label{eq-equiv2a}
 [D,\mathbf{R}_g]=0, \text{ for all }g\in U(n).
\end{equation} 
Equality~\eqref{clifford1} is straightforward, while \eqref{eq-equiv2a} is \cite[Lemma 4]{SaSc2}.
 
\subsection*{Main results.}
 
Let $\A$ be the operator algebra generated by the operators $T_z$ and 
$\mathbf{R}_g$ for $z\in\mathbb{C}^n, g\in U(n)$ on the graded Hilbert space 
$\H= L^2(\mathbb R^n, \Lambda(\mathbb C^n))$. 
It follows from~\eqref{eq-comm1} and~\eqref{cr12} that an arbitrary element in $\A$ 
can be written as a finite sum
\begin{equation}
\label{eq-genel1}
a=\sum_{z,g} a_{z,g}T_z\mathbf{R}_g:L^2(\mathbb{R}^n,\Lambda(\mathbb{C}^n) )\longrightarrow
L^2(\mathbb{R}^n,\Lambda(\mathbb{C}^n) ),\qquad a_{z,g}\in\mathbb{C}. 
\end{equation}
By $\Psi(\A,\H,D)$ we denote the algebra of all operators  of the form
$$
B=\sum_k D_kT_{z_k}\mathbf{R}_{g_k},
$$
where the sum is finite, $z_k\in\mathbb{C}^n,g_k\in U(n)$ and the $D_k$ are Shubin type pseudodifferential operators (see~Section~\ref{sec-shubin}).  This algebra $\Psi(\A,\H,D)$ might be larger than the one defined by Connes and Moscovici (see Section~\ref{sec1}).

\begin{theorem}\label{th34}
The conditions in Connes--Moscovici's local index theorem $($Theorem~$\ref{th0})$ are satisfied for the graded spectral triple $(\A,\H,D)$. More precisely,  the spectral triple is regular, finitely summable, and has simple dimension spectrum.
\end{theorem}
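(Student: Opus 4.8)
The plan is to verify the three conditions—regularity, finite summability, and simple dimension spectrum—separately, leaning on the structural description of $\A$ in \eqref{eq-genel1} and the commutation relations \eqref{clifford1}, \eqref{eq-equiv2a}, \eqref{cr12}. For \emph{finite summability}, observe that $D^2 = H + F$ by \eqref{di1}, and $H$ is (a sum of) harmonic oscillators whose spectrum on $L^2(\mathbb R^n)$ is well known; since $F$ is a bounded grading operator, $(1+D^2)^{-1/2}$ is comparable to $(1+H)^{-1/2}$, whose eigenvalue decay makes it lie in $\mathcal L^p(\H)$ for every $p > n$. In fact $(1+D^2)^{-s}$ is trace class for $\re s > n/2$; this is exactly the kind of estimate recorded for Shubin operators of order $< -2n$ in Section~\ref{sec-shubin}, applied to the symbol $(1+|x|^2+|p|^2)^{-s}$. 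So $p$-summability holds for any $p>n$.

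For \emph{regularity}, I would show that $[|D|,\cdot]$ maps the relevant operators into operators of no greater order, so that all iterated commutators stay bounded. The generators of $\A$ are the $T_z$ and $\mathbf R_g$. By \eqref{eq-equiv2a}, $\mathbf R_g$ commutes with $D$, hence with $D^2$ and with $|D|=(D^2)^{1/2}$, so all its iterated commutators vanish. For $T_z$, equation \eqref{clifford1} gives $[D,T_z]=\tfrac1{\sqrt2}T_z c(z)$ with $c(z)$ a bounded (Clifford) operator; iterating, $[D^2,T_z]$ is $T_z$ times a first-order Shubin operator (the analogue of the relation $[H,T_z]=T_z\cdot(\text{order }1)$ displayed in Section~\ref{sec2}), and more generally the iterated commutators $[D^2,[D^2,\dots[D^2,T_z]\dots]]$ stay in $\Psi(\A,\H,D)$ with controlled order. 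To pass from $D^2$ to $|D|$ one uses the standard device of writing $|D|^{-1}=\tfrac1\pi\int_0^\infty (D^2+\mu)^{-1}\mu^{-1/2}d\mu$ and commuting through the resolvent; this is where the Shubin calculus does the bookkeeping, since $(D^2+\mu)^{-1}$ is, up to the bounded factor involving $F$, a parameter-dependent Shubin pseudodifferential operator. The upshot is that $T_z$ and $[D,T_z]$ lie in the domains of all iterated commutators with $|D|$, and regularity follows; more precisely $\Psi(\A,\H,D)$ as defined in this section is contained in the algebra spanned by $D_k T_{z_k}\mathbf R_{g_k}$ with $D_k$ Shubin, and this whole algebra is stable under $[|D|,\cdot]$ and $[D^2,\cdot]$.

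For the \emph{simple dimension spectrum}, the heart of the matter is the zeta function $\zeta_B(z)=\tr(B|D|^{-2z})$ for $B=\sum_k D_k T_{z_k}\mathbf R_{g_k}$. By linearity it suffices to treat a single term $D_0 T_{z_0}\mathbf R_{g_0}$, and by the Mellin transform identity \eqref{mellin2} this reduces, via Proposition~\ref{prop2}, to establishing that the heat trace $\tr(D_0 T_{z_0}\mathbf R_{g_0}\, e^{-tD^2})$ admits an asymptotic expansion as $t\to 0^+$ in real powers $t^{m_k}$ with $m_k\nearrow\infty$, together with the $O(t^{-\infty})$ behaviour as $t\to\infty$ and rapid decay of $\zeta_B$ on vertical lines (the last via the weakly parametric calculus of Grubb and Seeley, as noted in Remark~\ref{decay}). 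Since $e^{-tD^2}=e^{-tF}e^{-tH}$ with $e^{-tF}$ a bounded scalar-on-each-$\Lambda^k$ factor expanding in a convergent Taylor series, everything reduces to the \emph{bosonic} heat trace $\tr(D_0 T_{z_0} R_{g_0}\, e^{-tH})$ on $L^2(\mathbb R^n)$; this is exactly the $n=1$ computation of Proposition~\ref{prop1} carried out in several variables. The tool is Mehler's formula for $e^{-tH}$, which gives an explicit Gaussian kernel; composing with the translation $T_{z_0}$, the unitary $R_{g_0}$ (whose Schwartz kernel is again Gaussian, being a metaplectic operator), and the Shubin operator $D_0$, one is left with a Gaussian oscillatory integral whose stationary phase / exact Gaussian evaluation produces the desired expansion, the leading power being governed by the dimension of the fixed-point set of the canonical transformation attached to $g_0$ and the shift $z_0$. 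The key point is that the quadratic form in the exponent is nondegenerate for $t>0$ and its determinant behaves like a power of $t$ as $t\to 0$, so the expansion has simple poles only, with $F=\mathbb Z+\ord B$ or a translate thereof. I expect the main obstacle to be precisely this last step: controlling the $t\to 0$ degeneration of the Gaussian integral uniformly—i.e. tracking how the Hessian of the combined phase degenerates along the fixed-point locus of $g_0$ and showing that after the Gaussian integration one obtains a genuine asymptotic series in powers of $t$ with \emph{simple} poles (no logarithms, no higher-order poles), which is where the stationary phase bookkeeping and the structure of the metaplectic Gaussian kernels must be handled with care.
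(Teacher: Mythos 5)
Your route differs from the paper's at the decisive point, and that is where the gap sits. For the dimension spectrum you propose the heat-trace/Mellin route (which the remark following Proposition~\ref{prop2} does sanction in principle), reducing everything to a small-$t$ expansion of $\tr\bigl(D_0T_{z_0}\mathbf{R}_{g_0}e^{-tD^2}\bigr)$ obtained from Mehler's formula. But Mehler's formula gives an explicit Gaussian kernel only for $e^{-tH}$ and for the metaplectic factor; once you compose with a \emph{general} Shubin pseudodifferential operator $D_0$ (the algebra $\Psi(\A,\H,D)$ contains genuine pseudodifferential, not merely differential, factors), the trace is no longer a Gaussian integral but an oscillatory integral involving the full classical symbol of $D_0$, and for such traces Grubb--Seeley type expansions generically contain $\log t$ terms, which correspond to double poles of $\zeta_B$. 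Ruling these out is precisely the content of the theorem, and your sketch explicitly defers it (``the main obstacle \dots must be handled with care'') rather than proving it. The paper closes this gap by a different analysis (Section~\ref{sec:zeta}, Theorem~\ref{zeta1}): it never expands the heat operator, but writes $\zeta_{A,g,w}(z)=\tr(R_gT_wAH^{-z})$ as an oscillatory integral with the classical symbol of $AH^{-z}$ of order $\ord A-2z$, carries out the Gaussian integrations coming from the Mehler kernel of $R_g$ exactly, reduces the phase to a quadratic form, and applies stationary phase in the radial variable; only the kernel of that quadratic form contributes non-entire terms, and termwise integration of $r^{\ord A-2z+\varkappa_1-1-j}$ yields manifestly simple poles with entire coefficients, located at $z=\dim\mathbb{C}^n_g+(\ord A-j)/2$ (and no poles at all when the affine map has no fixed points). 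Some argument of this strength (or the weakly parametric calculus invoked in Remark~\ref{decay}) is needed; without it your proposal establishes the expansion only for the special operators appearing in the cocycle, not for all of $\Psi(\A,\H,D)$ as ``simple dimension spectrum'' requires.

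Two smaller points. First, your summability exponent is off by a factor of two: the $n$-dimensional harmonic oscillator has eigenvalues $|\alpha|+n/2$ with multiplicity growth of order $\lambda^{n-1}$, so $(1+D^2)^{-s}$ is trace class only for $\re s>n$ (consistently with the Shubin criterion you quote, since $(1+|x|^2+|p|^2)^{-s}$ has order $-2s<-2n$ exactly when $s>n$), giving $p$-summability for $p>2n$ as the paper states, not $p>n$, and your claim ``trace class for $\re s>n/2$'' contradicts your own cited criterion. This does not affect the qualitative conclusion of finite summability. Second, your regularity argument (commutation relations \eqref{clifford1}, \eqref{eq-equiv2a}, plus the resolvent formula for $|D|$, with the Shubin calculus doing the bookkeeping) is in substance the same as the paper's appeal to the invariance of the Shubin calculus under the affine metaplectic group, just spelled out in more detail, and is fine.
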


\begin{proof}
The regularity of the spectral triple follows from the invariance of 
the Shubin pseudodifferential calculus under the affine metaplectic group
generated by all $T_z$ and $\mathbf{R}_g$, $z\in\mathbb{C}^n,g\in U(n)$.   
Moreover, the explicit description of the spectrum of $D^2=H+F$ enables one to prove that $|D|^{-1}$ is $p$-summable whenever $p>2n.$

The proof that  zeta functions for this spectral triple extend to meromorphic functions on $\mathbb{C}$ with simple poles is deferred to Section \ref{sec:zeta}. 
\end{proof}

To state the main result of this paper, we recall the definition of the Berezin integral. 
Given a complex subspace  $L\subset \mathbb{C}^n$, we define the  Berezin integral as a linear functional 
\begin{equation}
\label{eq-bere1}
\int_{L}:\Lambda( L)  \longrightarrow   \mathbb{C}
\end{equation} 
on exterior forms on $L$ considered as a real vector space of dimension $2k$. To define this functional, we choose an orthonormal base $e_1,...,e_k$ in $L$, denote the coordinates with respect to this base by $z_j=p_j+ix_j$ and consider the volume form $dp_1\wedge dx_1\wedge\ldots\wedge dp_k\wedge dx_k\in\Lambda^{2k}(L).$ Then the Berezin integral~\eqref{eq-bere1} is characterized by the properties: $\int_L dp_1\wedge dx_1\wedge\ldots\wedge dp_k\wedge dx_k=1$ and $\int_L\omega=0$ whenever $\deg\omega<2k$. It is easy to show that this definition does not depend on the choice of an orthonormal base. Below  we denote the coordinates in $\mathbb{C}^n$ by $z=p+ix$.

\begin{theorem}\label{th-main3}
The components of the Connes--Moscovici 
cocycle $\Psi=(\Psi_0,\Psi_2,...,\Psi_{2n})$ of the spectral triple in Theorem~\ref{th34} are equal to  
\begin{equation}\label{fmain1}
\Psi_{2k}(a_0,a_1,...,a_{2k})=
\left\{
\begin{array}{l}
\displaystyle 0,\quad \text{if the  mapping $w\mapsto gw+z$  has no fixed points or }k>\dim \mathbb{C}^n_g\vspace{2mm}\\  
\displaystyle\frac{ i^{ -k}}{(2k)!}e^{i\varepsilon}   \prod_{j=1}^m  e^{\frac i 4|(z,e_j)|^2\ctg(\varphi_j/2) } 
\int_{\mathbb{C}^n_g} \sigma(w_1)\wedge\sigma( w_2)\wedge...\wedge \sigma( w_{2k})   \wedge e^{-\omega}
\text{ else,}
\end{array} 
\right.
\end{equation}
where
\begin{itemize}
\item  $a_j=T_{z_j}\mathbf{R}_{g_j}$, $z_j\in\mathbb{C}^n$, $g_j\in U(n)$;
\item $\mathbb{C}^n_g\subset\mathbb{C}^n$ is the fixed point set of $g=g_0 g_1...g_{2k}$; $m=n-\dim_{\mathbb C}  \mathbb{C}^n_g$; 
\item $e^{i\varphi_j}$ for $1\le j\le m$ are the eigenvalues of $g$ which are $\ne 1$, while $e_j\in\mathbb{C}^n$ stand for the corresponding orthonormal system of eigenvectors;
\item $z=w_0+w_1+...w_{2k},$  where $w_j=(g_0g_1...g_{j-1})z_j;$ 
\item $ \varepsilon=\im(\sum_{j\ge k}w_j\overline{w}_k)/2$; 
\item $\sigma(a-ik)=-k dx+a dp\in \Lambda^1(\mathbb{R}^{2n})$;
\item $\omega=\sum_{j=1}^n dx_j\wedge dp_j$  is the symplectic form on $\mathbb{R}^{2n}$;
\item $\int_{\mathbb{C}^n_g}:\Lambda(\mathbb{C}^n_g)\to \mathbb{C}$ stands for the Berezin integral on $\mathbb{C}^n_g\subset\mathbb{C}^n$. 
\end{itemize}
\end{theorem}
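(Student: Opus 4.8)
The plan is to compute each component $\Psi_{2k}$ by combining the heat-trace version of the Connes--Moscovici formula, Eq.~\eqref{ht1}, with equivariant heat trace asymptotics for the Euler operator $D^2 = H + F$. First I would reduce the expression $a_0[D,a_1]^{[\alpha_1]}\cdots[D,a_{2k}]^{[\alpha_{2k}]}e^{-tD^2}$ for $a_j = T_{z_j}\mathbf{R}_{g_j}$ to a tractable form. Using \eqref{clifford1}, the commutator $[D,T_z]$ contributes a Clifford multiplication operator $\tfrac1{\sqrt2}T_z c(z)$, and by \eqref{eq-equiv2a} the metaplectic part commutes with $D$, so $[D,T_{z_j}\mathbf{R}_{g_j}] = \tfrac1{\sqrt 2}T_{z_j}c(z_j)\mathbf{R}_{g_j}$. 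A degree count shows (as in the one-dimensional case) that only the terms with $\alpha = 0$ survive, since $[D^2, \cdot]$ raises the order and operators of order $<-2n$ are trace class; this kills all iterated commutators and reduces \eqref{ht1} to the finite part of $t^k\,\tr_s(a_0[D,a_1]\cdots[D,a_{2k}]e^{-tD^2})$. Next, using the product formula \eqref{eq-comm1} and the intertwining relation \eqref{cr12}, I would push all Heisenberg-Weyl factors to the left, collect the phase $e^{i\varepsilon}$ from the cocycle of the Schrödinger representation, and recombine the metaplectic factors into a single $\mathbf{R}_g$ with $g = g_0g_1\cdots g_{2k}$. This leaves us computing $\tr_s\bigl(T_w \mathbf{R}_g \cdot (\text{Clifford operator on }\Lambda(\mathbb C^n))\cdot e^{-t(H+F)}\bigr)$ with $w = z$ in the notation of the theorem.

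The trace then factors into a \emph{bosonic} part, the equivariant heat trace $\tr\bigl(T_w R_g e^{-tH}\bigr)$ on $L^2(\mathbb R^n)$, and a \emph{fermionic} part, a supertrace over $\Lambda(\mathbb C^n)$ of the Clifford product twisted by the exterior action ${g^*}^{-1}$ and the grading operator $e^{-tF}$. For the bosonic part I would diagonalize $g$ on $\mathbb C^n$: on the fixed subspace $\mathbb C^n_g$ the computation reduces to the pure oscillator heat trace on $L^2(\mathbb R^{\dim_{\mathbb C}\mathbb C^n_g})$, which by Mehler's formula behaves like $t^{-\dim_{\mathbb C}\mathbb C^n_g}$ (plus corrections), while on each two-real-dimensional eigenspace with eigenvalue $e^{i\varphi_j}\ne 1$ one gets the finite factor $\tfrac1{1-e^{-i\varphi_j}}\exp\bigl(\tfrac i4|(z,e_j)|^2\ctg(\varphi_j/2)\bigr)$ exactly as in Proposition~\ref{prop1}, line three. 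If the affine map $w\mapsto gw + z$ has no fixed point, the shift $T_w$ with $w\ne0$ forces the bosonic trace to be $O(t^{+\infty})$ on the relevant factor (Proposition~\ref{prop1}, line one), giving $\Psi_{2k}=0$. For the fermionic part I would use Getzler rescaling: the supertrace of a product of Clifford elements over $\Lambda(\mathbb C^n_g)$ picks out the top-degree component, and the twist by ${g^*}^{-1}$ on the non-fixed directions produces, via the Berezinian / Mathai--Quillen type identity, precisely the factor $e^{-\omega}$ restricted to $\mathbb C^n_g$ together with the symbols $\sigma(w_j)$. Matching powers of $t$: the bosonic factor supplies $t^{-\dim_{\mathbb C}\mathbb C^n_g}$, and we need the coefficient of $t^{-k}$, which forces $k \le \dim_{\mathbb C}\mathbb C^n_g$ (hence the vanishing when $k$ is larger) and selects the top Berezin component on $\mathbb C^n_g$ of the $(2k)$-form $\sigma(w_1)\wedge\cdots\wedge\sigma(w_{2k})\wedge e^{-\omega}$.

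Finally I would assemble the constants: the $\tfrac1{\sqrt2}$ from each of the $2k$ Clifford commutators, the combinatorial weight $d_{k,0} = 1/(1\cdot 2\cdots 2k) = 1/(2k)!$ from \eqref{ht1}, and the factors of $i$ arising from the identification $z = p + ix$ and from $\sigma(D)^2 = \tfrac12(|x|^2+|p|^2)$; these combine to the prefactor $i^{-k}/(2k)!$. For the degenerate component $\Psi_0$ ($k=0$), I would instead use \eqref{ht1b}/\eqref{res3a} and read off the constant term of $(e^{t/2}-\cdots)\tr(T_w R_g e^{-tH})$, recovering the stated formula with $\prod_j e^{\frac i4|(z,e_j)|^2\ctg(\varphi_j/2)}$ and no wedge factor, consistent with an empty Berezin integral over $\mathbb C^n_g$ when $g=1$ giving $1$.

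\emph{The main obstacle} I anticipate is the fermionic supertrace computation: correctly tracking how the exterior action ${g^*}^{-1}$ on $\Lambda(\mathbb C^n)$ interacts with the grading operator $e^{-tF}$ and the Clifford multiplications, so that the non-fixed directions contribute exactly $\det(1 - {g^*}^{-1})$-type factors that cancel against the bosonic $\tfrac1{1-e^{-i\varphi_j}}$ factors and leave the clean Berezin integral $\int_{\mathbb C^n_g}\sigma(w_1)\wedge\cdots\wedge\sigma(w_{2k})\wedge e^{-\omega}$ over the \emph{fixed} subspace only. Getting the orientation conventions, the normalization of the Berezin integral relative to $\omega$, and the precise phase $\varepsilon$ all mutually consistent is the delicate bookkeeping; the analytic inputs (Mehler, trace-class estimates, meromorphic continuation) are already in hand from Proposition~\ref{prop1}, Theorem~\ref{th34}, and Section~\ref{sec-shubin}.
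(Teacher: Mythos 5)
Your overall strategy---factorizing the supertrace into a bosonic equivariant heat trace on $L^2(\mathbb R^n)$ handled by Mehler's formula/Proposition~\ref{prop1} and a fermionic Clifford supertrace on $\Lambda(\mathbb C^n)$ evaluated by a Berezin-integral identity, then assembling constants via \eqref{ht1}---is exactly the paper's route for the $\alpha=0$ terms. The genuine gap is your disposal of the terms with $\alpha\ne0$. You argue ``as in the one-dimensional case'' that $[D^2,\cdot]$ raises the order and operators of order $<-2n$ are trace class, so all iterated commutators drop out. That count only shows that $a_0[D,a_1]^{[\alpha_1]}\cdots[D,a_{2k}]^{[\alpha_{2k}]}|D|^{-2(|\alpha|+k)}$ has Shubin order $\le-|\alpha|-2k$, which is $<-2n$ only when $|\alpha|+2k>2n$; in dimension $n=1$ this indeed leaves only $k=0$ and $k=1,\alpha=0$, but for $n>1$ it says nothing about the terms with $0<|\alpha|\le 2n-2k$ (e.g.\ $k=1$, $|\alpha|=1$, $n=2$). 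Nor does equivariant localization by itself rescue the argument: by Theorem~\ref{zeta1} the zeta function of such a term can still have a pole at $z=|\alpha|+k$ whenever $k+|\alpha|/2\le\dim_{\mathbb C}\mathbb C^n_g$. The paper needs a separate mechanism (its Step 3): it introduces the Getzler order (Shubin order plus Clifford degree), proves $\ord[D,T_w]^{[\gamma]}\le 1+\gamma$ (Lemma~\ref{lem-get1}), and then combines the bosonic bound $O(t^{-\dim\mathbb C^n_g-\ord B_k/2})$ with the compensating fermionic bound $O(t^{\dim\mathbb C^n_g-\ord a_k/2})$ (Lemma~\ref{lem-get2}) to conclude $\tr_s(a_0[D,a_1]^{[\alpha_1]}\cdots[D,a_{2k}]^{[\alpha_{2k}]}e^{-tD^2})=O(t^{-(2k+|\alpha|)/2})$, whence $t^{k+|\alpha|}\tr_s(\cdots)=O(t^{|\alpha|/2})$ has vanishing constant term for $\alpha\ne0$. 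Without this (or an equivalent) estimate your reduction to $\alpha=0$ is unjustified, and this is the one place where the $n$-dimensional proof genuinely differs from the one-dimensional model you invoke.

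A secondary remark: the fermionic computation you defer---how ${g^*}^{-1}$ and $e^{-tF}$ expand in the Clifford algebra so that the $(1-e^{-i\varphi_j})$ factors cancel the bosonic $(1-e^{-i\varphi_j})^{-1}$ and the factor $e^{-\omega}$ together with $\sigma(w_1)\wedge\cdots\wedge\sigma(w_{2k})$ emerges on $\mathbb C^n_g$---is precisely the content of Lemma~\ref{lem6} and Proposition~\ref{p5} in the paper. Your sketch points in the right direction (and your power counting giving the vanishing for $k>\dim_{\mathbb C}\mathbb C^n_g$ is correct), but as you yourself note this is the heart of the computation and would still have to be carried out in detail.
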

\begin{remark}
Note that the affine mapping $w\mapsto gw+z$ is equal to the composition of the affine mappings $w\mapsto g_jw+z_j$
for $j=0,1,...,2k$.
\end{remark}
\begin{remark}
If $n=1$, then \eqref{fmain1} coincides with the 
Connes--Moscovici cocycle in \eqref{fi0} and \eqref{fi2}.  
\end{remark} 

\subsection*{Proof of Theorem~\ref{th-main3}.}
The proof is divided into three steps. First, we compute the heat trace asymptotics for scalar operators. We then use these asymptotics and Getzler's calculus to identify the contribution of the terms with $\alpha=0$ in the Connes--Moscovici formula. Finally we show that the contributions of the terms with $\alpha\ne 0$ are equal to zero.

\subsection*{Step 1. Heat trace asymptotics for scalar operators in $\mathbb{R}^n$.}

Given $g\in U(n)$, we diagonalize it: $g=hg_0h^{-1}$, where $g_0,h\in U(n)$, while $g_0={\rm diag}(e^{i\varphi_1},...,e^{i\varphi_m},1,...,1)$,
$\varphi_1,...,\varphi_m\in (0,2\pi)$, $\varphi_{m+1}=...=\varphi_n=0$. 
Then we have
\begin{equation}\label{eq6}
\tr (T_zR_ge^{-t H})=\tr (T_zR_h R_{g_0}R_h^{-1}e^{-t  H})=
\tr (R_h^{-1}T_z R_h R_{g_0}e^{-t  H})= \tr (T_{h^{-1}z} R_{g_0}e^{-t  H}).
\end{equation}
In the second equality we used the fact that $R_h$ commutes with $  H$, while in the last  we used \eqref{cr12}. 
Since $g_0$ is diagonal, the trace in \eqref{eq6} is the product of the traces of $n$ operators on $\mathbb{R}$:
$$
\tr (T_zR_ge^{-t  H})=\prod_{j=1}^n \tr ( T_{(h^{-1}z)_j} R_{\varphi_j}e^{-t H_j}),\quad
H_j=\frac{1}{2}(x_j^2-\partial^2_{x_j}),
$$
where $(h^{-1}z)_j$ is the $j$-th component of $h^{-1}z$.
We now apply the one-dimensional heat expansion of  Proposition~\ref{prop1} and obtain the following asymptotics.
\begin{proposition}\label{prop4}
\begin{equation}\label{ht6}
 \tr (T_zR_ge^{-t H})\sim
 \left\{
 \begin{array}{l}
  \displaystyle  O(t^{+\infty}) \quad \text{ if the affine mapping $w\to gw+z$ has no fixed points}; \vspace{2mm}\\
    \displaystyle t^{-(n-m)} \prod_{j=1}^m \frac {e^{ \frac i 4|(z,e_j)|^2
    \ctg(\varphi_j/2) }}{ 1-e^{-i \varphi_j} } 
\quad  \text{ otherwise}.\\
 \end{array}
\right.
\end{equation}
As before, $\{e_j\}_{j=1}^m$ is an orthonormal system of eigenvectors of $g$ with eigenvalues $e^{i\varphi_j}$; $e^{i\varphi_j}\ne 1$ for $j=1,\ldots, m$. 
Note that $(z,e_j)=(h^{-1}z)_j$ and that the condition that the fixed point set of $w\mapsto gw+z$ is nontrivial is equivalent to   $( z)_j=(z,e_j)= 0 \text{ for all }j>m$ (equivalently, $z$ is orthogonal to the fixed point set of $g$).

Also, if $B$ is a differential operator of order $\ord B$, then one has
\begin{equation}\label{ht6b}
 \tr (BT_zR_ge^{-t H})\sim
 \left\{
 \begin{array}{l}
  \displaystyle  O(t^{+\infty}) \quad \text{ if the   mapping $w\to gw+z$ has no fixed points}; \vspace{2mm}\\
    \displaystyle O(t^{-(n-m+\ord B/2)})  
\quad  \text{ otherwise}.\\
 \end{array}
\right.
\end{equation}
\end{proposition}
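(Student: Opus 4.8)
The plan is to reduce everything to the one-dimensional computation of Proposition~\ref{prop1} via diagonalization, which is already carried out in \eqref{eq6} and the display following it. After the change of variables $z\mapsto h^{-1}z$ we are left with evaluating $\prod_{j=1}^n \tr(T_{(h^{-1}z)_j}R_{\varphi_j}e^{-tH_j})$, a product of $n$ one-dimensional traces, each governed by the three cases of \eqref{heat1}. The first thing I would do is sort the factors into three groups: indices $j>m$ with $(h^{-1}z)_j=0$ (these contribute the factor $\frac1t+O(1)$, i.e. a simple pole of order one), indices $j>m$ with $(h^{-1}z)_j\neq0$ (these contribute $O(t^{+\infty})$), and indices $j\le m$ (these contribute $\frac{1}{1-e^{-i\varphi_j}}\exp(\frac i4|(h^{-1}z)_j|^2\ctg(\varphi_j/2))+O(t)$, a constant). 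Multiplying, we see immediately that if any index $j>m$ has $(h^{-1}z)_j\neq0$ the whole product is $O(t^{+\infty})$; otherwise we get $t^{-(n-m)}$ times $\prod_{j=1}^m \frac{e^{\frac i4|(h^{-1}z)_j|^2\ctg(\varphi_j/2)}}{1-e^{-i\varphi_j}}+O(t^{-(n-m)+1})$, which is \eqref{ht6} once we record $(h^{-1}z)_j=(z,e_j)$ where $e_j=he_j^{(0)}$ is the eigenvector of $g$ for $e^{i\varphi_j}$.

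The remaining point is the geometric reformulation of the dichotomy, namely that ``$(h^{-1}z)_j=0$ for all $j>m$'' is equivalent to ``the affine map $w\mapsto gw+z$ has a fixed point.'' For this I would note that $w$ is fixed iff $(g-1)w=-z$, and that $g-1$ is invertible precisely on the orthogonal complement of $\ker(g-1)$, which is the span of the $e_j$, $j\le m$; so the equation is solvable iff $z$ lies in the range of $g-1$, i.e. iff $z\perp\ker(g-1)$, i.e. iff $(z,e_j)=0$ for $j>m$. (Here I use that $g$ is unitary, so $\mathbb{C}^n=\ker(g-1)\oplus\operatorname{ran}(g-1)$ orthogonally.) This gives the statement and also the ``equivalently'' clause about $z$ being orthogonal to the fixed point set of $g$.

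Finally, for \eqref{ht6b} with $B$ a differential operator of order $\ord B$: I would invoke the second commutation relation in \eqref{eq-d3}, $[H,T_z]=T_z\cdot(\text{operator of order }1)$, together with the analogous fact that $R_g$ conjugates Shubin operators into Shubin operators of the same order (the Egorov property of Section~\ref{sec-shubin}), to move $B$ past $T_zR_g$ at the cost of replacing $B$ by another operator of the same order. Then $\tr(BT_zR_ge^{-tH})$ is a sum of terms, each obtained from the scalar heat kernel by applying a differential operator of order $\le\ord B$ before restricting to the diagonal and integrating. Each such derivative, acting on the Gaussian kernel in Mehler's formula, costs at most $t^{-1/2}$ per order in the small-$t$ regime (since $\cth t,\ 1/\sh t\sim t^{-1}$ and each $x$-monomial produced by completing the square integrates against a Gaussian of variance $\sim t$), so the leading pole is shifted by at most $\ord B/2$, giving $O(t^{-(n-m+\ord B/2)})$; the $O(t^{+\infty})$ case is unaffected since the off-diagonal exponential decay that killed the trace is untouched by finitely many derivatives. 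The main obstacle — though it is more bookkeeping than genuine difficulty — is making the order-counting in \eqref{ht6b} precise and uniform across all the factors after diagonalization; everything else is a direct assembly of Proposition~\ref{prop1}.
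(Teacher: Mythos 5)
Your proposal is correct and follows essentially the same route as the paper: diagonalize $g=hg_0h^{-1}$, use \eqref{cr12} and the invariance of $H$ to reduce to $\tr(T_{h^{-1}z}R_{g_0}e^{-tH})$, factor into $n$ one-dimensional traces, and read off \eqref{ht6} from Proposition~\ref{prop1}; the paper in fact leaves the fixed-point equivalence and the bound \eqref{ht6b} unproved, and your linear-algebra argument ($z\in\operatorname{ran}(g-1)=\ker(g-1)^\perp$) and the derivative/multiplication counting against the Mehler kernel are exactly the intended fillers. Only a wording slip: the diagonal Gaussian $e^{-x^2t/2}$ in the fixed directions has width $t^{-1/2}$ (variance $\sim 1/t$, not $\sim t$), which is what yields your (correct) cost of at most $t^{-1/2}$ per order of $B$.
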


\subsection*{Step 2. Computation of the Connes--Moscovici cocycle for $\alpha=0$.}
Given $a_j=T_{z_j}\mathbf{R}_{g_j}$, we have\footnote{Here we used the property ${g^{-1}}^*c(z)=c(gz){g^{-1}}^*$,
which is easy to prove using the property $g^*(v\lrcorner \omega) =(g_* v)\lrcorner (g^*\omega)$ for a vector $v$ and a differential form $\omega$.} 
\begin{multline}\label{eq-la1}
a_0[D,a_1]...[D,a_{2k}]e^{-tD^2}=
2^{-k}T_{z_0}\mathbf{R}_{g_0}c(z_1)T_{z_1}\mathbf{R}_{g_1}... c(z_{2k})T_{z_{2k}}\mathbf{R}_{g_{2k}}e^{-t(  H+ F)} \\
=2^{-k}\Bigl(T_{z_0}R_{g_0}T_{z_1}R_{g_1}...  T_{z_{2k}}R_{g_{2k}}e^{-t  H}\Bigr)\otimes
\Bigl( {g^{-1}_0}^*c(z_1) {g^{-1}_1}^*c(z_2){g^{-1}_2}^*...c(z_{2k}){g^{-1}_{2k}}^* e^{-t   F}\Bigr) \\
=2^{-k}\Bigl(e^{i\varepsilon}T_zR_g e^{-t H}\Bigr)\otimes\Bigl(c(w_1) c(w_2)...c(w_{2k}){g^{-1}}^* e^{-t   F}\Bigr),
\end{multline}
where  
$$
z=w_0+w_1+w_2+...,\quad w_j=(g_0g_1...g_{j-1})z_j,  
$$
while $e^{i\varepsilon}\text{Id}=T_{w_0}T_{w_1}T_{w_2}...T_{w_{2k}}T_z^{-1}$. 
Hence, we obtain for the supertrace
\begin{equation} \label{susy0}
\tr_s(a_0[D,a_1]...[D,a_{2k}]e^{-tD^2})=2^{-k}e^{i\varepsilon}\tr\Bigl(T_zR_g e^{-t  H}\Bigr)\tr_s \Bigl(c(w_1) c(w_2)...c(w_{2k}){g^{-1}}^* e^{-t   F}\Bigr).
\end{equation}
The heat trace  $\tr\Bigl(T_zR_g e^{-t  H}\Bigr)$ here is computed using \eqref{ht6}.
Note that \eqref{ht6} is exponentially small if the fixed point set of the affine mapping $w\to gw+z$ is empty.
 It remains to compute the expansion of the supertrace in \eqref{susy0}.

We have $w_j\in \mathbb{C}^n\simeq \mathbb{R}^{2n}$ with the base $e_1,e_2,...,e_{2n-1},e_{2n}$:
$$
e_1=(i,0,0,...,0),\; e_2=(1,0,0,...,0),\; ..., e_{2n-1}=(0,0,...,0,i),\; e_{2n}=(0,0,...,0,1)
$$
and let $Cl(2n)$ be the real Clifford algebra generated by these vectors with the relations
$$
e_j^2=1 \text{ for all }j;\quad e_je_k+e_ke_j=0 \text{ for all }k\ne j .
$$ 
The mapping $z\mapsto c(z)\in\End\Lambda(\mathbb{C}^n)$ defined in \eqref{clifford1} enjoys the property
$$
c(z_1)c(z_2)+c(z_2)c(z_1)=2\re (z_1\cdot \overline z_2).
$$
Thus,  $c(e_j)^2=1$, and $c(e_j)c(e_k)+c(e_k)c(e_j)=0$, $k\not=j$. Hence, this mapping uniquely extends to a homomorphism of  algebras
denoted by $c:Cl(2n)\to\End\Lambda(\mathbb{C}^n) $. 

The symbol mapping $\sigma$ is given by 
$$
 \begin{array}{ccc}
\sigma: Cl(2n) & \lra &  \Lambda(\mathbb{R}^{2n})\\
  e_{j_1}e_{j_2}...e_{j_k} & \longmapsto & \sigma(e_{j_1})\wedge \sigma(e_{j_2})\wedge ...\wedge \sigma(e_{j_k}),
\end{array} 
$$
where  $j_1,...,j_k$ are all different, and $\sigma(e_{2j-1})=dx_j$, $\sigma(e_{2j})=dp_j$. 

In the sequel we shall need the Berezin lemma:
$$
\tr_s(c(a))=(-2i)^n\int_{\mathbb{C}^{2n}} \sigma(a),\quad \forall a\in Cl(2n).
$$
To check that the constant here is chosen correctly, we consider the special case $n=1$. Then $Cl(2)$ is spanned by $1,e_1,e_2,e_1e_2$. Both sides of the formula are nontrivial only for $a=e_1e_2$ and we have
$$
c(e_1)c(e_2)= (-idx\wedge+idx\lrcorner) (dx\wedge+dx\lrcorner)=-i(dx\wedge dx\lrcorner-dx\lrcorner dx\wedge).
$$
Hence, we get $\tr_s(c(e_1)c(e_2))=i+i=2i$. On the other hand, we have $\sigma(e_1e_2)=dx\wedge dp$ and $\int_\mathbb{C}\sigma(e_1e_2)=\frac{dx\wedge dp}{dp\wedge dx}=-1$.

\begin{proposition}\label{p5} Given $k\le n-m$, we have
\begin{multline}\label{fermi2}
 \tr_s \Bigl(c(w_1) c(w_2)...c(w_{2k}){g^{-1}}^* e^{-t  F}\Bigr) \\
\sim
t^{n-m-k}2^ki^{-k}
\prod_{j=1}^m  \left( 1-e^{-i\varphi_j}\right)  \int_{\mathbb{C}^n_g} 
\sigma(w_1)\wedge\sigma( w_2)\wedge...\wedge \sigma( w_{2k})   \wedge e^{-\omega} ,
\end{multline}
where $\omega=dx_1\wedge dp_1+\ldots + dx_n\wedge dp_n$ is the symplectic form on $\mathbb{C}^n$
with coordinate $z=p+ix$.
Also if $k>n-m$, then the left hand side in \eqref{fermi2} is $O(1)$.
\end{proposition}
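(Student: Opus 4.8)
The plan is to compute the fermionic supertrace in~\eqref{fermi2} by first reducing the operator ${g^{-1}}^*$ to an exponential of a Clifford element, then applying Getzler's rescaling and the Berezin lemma. The key observation is that $e^{-tF}$ acts on $\Lambda^k(\mathbb{C}^n)$ as $e^{-t(k-n/2)}\mathrm{Id}$, and that $F$ itself can be written in Clifford form: indeed $F = \sum_{j} (\text{number operator in }dx_j\wedge, dp_j\lrcorner) - n/2$ is a sum of commuting degree-two Clifford elements, so $e^{-tF}$ is the image under $c$ of an explicit element of (the complexification of) $Cl(2n)$. Similarly, each eigenvalue-$e^{i\varphi_j}$ block of ${g^{-1}}^*$ acting on the corresponding two-dimensional summand of $\Lambda(\mathbb{C}^n)$ can be written as $\exp$ of a Clifford element; this is the standard ``fermionic Mehler/rotation'' formula. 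On the fixed subspace $\mathbb{C}^n_g$ the map ${g^{-1}}^*$ is the identity, which is precisely where the symplectic-form factor $e^{-\omega}$ will come from, while on the complementary $m$ rotated planes it contributes the scalars $1-e^{-i\varphi_j}$.

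\textbf{Step A.} First I would split $\mathbb{C}^n = \mathbb{C}^n_g \oplus (\mathbb{C}^n_g)^\perp$ according to $g = g_0g_1\cdots g_{2k}$, so that $\Lambda(\mathbb{C}^n) \simeq \Lambda(\mathbb{C}^n_g)\otimes\Lambda((\mathbb{C}^n_g)^\perp)$ and all three operators $c(w_j)$ (since the $w_j$ need not lie in $\mathbb{C}^n_g$ we must be careful — actually $c(w_j)$ will be split into its fixed-part and perp-part Clifford factors), ${g^{-1}}^*$, and $e^{-tF}$ respect a compatible tensor decomposition up to the usual sign/graded issues, which are handled by using the supertrace. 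The supertrace of a tensor product is the product of supertraces, so the computation factors as: a piece on $\Lambda(\mathbb{C}^n_g)$ involving $c(w_1^\parallel)\cdots c(w_{2k}^\parallel)e^{-tF^\parallel}$, and pieces on each rotated plane involving ${g^{-1}}^*$ restricted there. The perp pieces contain no $c(w_j)$ factors (because, as Proposition~\ref{prop4} already exploits, the nonemptiness of the fixed point set forces $z$ — and one checks the relevant combination of the $w_j$ — to be orthogonal to $\mathbb{C}^n_g$; but the individual $w_j$ may have perp components, so more precisely the product $c(w_1)\cdots c(w_{2k})$ must be expanded and only the ``all-parallel'' term survives after taking supertrace against $e^{-tF^\perp}{g^{-1}}^*$ on the perp planes, since a supertrace of an odd number of Clifford generators on a single plane vanishes).

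\textbf{Step B.} On each rotated $2$-plane, compute $\tr_s\big({g^{-1}}^*|_{\text{plane}} \cdot e^{-tF|_{\text{plane}}}\big)$ directly: this plane contributes $\Lambda^0\oplus\Lambda^1_{\mathbb{R}^2}\oplus\Lambda^2$, on which $F$ acts by $-\tfrac12+(\text{shifted number})$ and ${g^{-1}}^*$ acts by $1$, $e^{-i\varphi_j}\oplus 1$ (on the two-real-dimensional middle piece, as a rotation with determinant...), $e^{-i\varphi_j}$ respectively — and a short computation gives exactly $(1-e^{-i\varphi_j}) + O(t)$, matching the claimed constant. On $\Lambda(\mathbb{C}^n_g)$, writing $F^\parallel$ in Clifford form and applying Getzler rescaling $c(e_j)\rightsquigarrow \sqrt t\,(\text{creation}) + \tfrac1{\sqrt t}(\text{annihilation})$ converts $c(w_1)\cdots c(w_{2k})e^{-tF^\parallel}$, after collecting powers of $t$, into $t^{n-m-k}$ times a Clifford element whose symbol is $\sigma(w_1)\wedge\cdots\wedge\sigma(w_{2k})\wedge e^{-\omega}$; the Berezin lemma $\tr_s(c(a)) = (-2i)^{n-m}\int_{\mathbb{C}^n_g}\sigma(a)$ then produces the stated formula, with the $2^k i^{-k}$ and the remaining signs accounted for by the normalization constant checked in the $n=1$ example above. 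The ``$k>n-m$ gives $O(1)$'' claim follows because then $\sigma(w_1)\wedge\cdots\wedge\sigma(w_{2k})$ already exceeds the top degree $2(n-m)$ of $\Lambda(\mathbb{C}^n_g)$, so after Getzler rescaling no term can produce a negative power of $t$.

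\textbf{The hard part} will be bookkeeping the signs and the precise powers of $t$ through the Getzler rescaling and the tensor-factorization of the supertrace — in particular making sure the graded (Koszul) signs from reordering Clifford factors across the $\mathbb{C}^n_g\oplus(\mathbb{C}^n_g)^\perp$ splitting are absorbed correctly, and that the ``wrong-degree'' terms in the expansion of $c(w_1)\cdots c(w_{2k})$ genuinely drop out under the supertrace on the perp planes. The exponential-in-$t$ structure of $e^{-tF}$ is elementary, so the content is entirely in getting the leading coefficient right; the $n=1$ check of the Berezin constant is the anchor that fixes all ambiguities.
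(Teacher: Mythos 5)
There is a genuine problem in your Step B. The fermionic fiber of this spectral triple is $\Lambda(\mathbb{R}^n)\otimes\mathbb{C}\simeq\Lambda(\mathbb{C}^n)$, a $2^n$-dimensional Clifford module: in coordinates where $g$ is diagonal, each rotated eigenline of $g$ contributes only a \emph{two}-dimensional factor, spanned by $1$ and $dx_j$, on which $c(e_{2j-1}e_{2j})=\diag(i,-i)$, hence by \eqref{g-cliff1} ${g^{-1}}^*=\diag(1,e^{-i\varphi_j})$ and the $j$-th summand of $F$ is $\diag(-\tfrac12,\tfrac12)$; the per-plane factor is therefore $e^{t/2}-e^{-i\varphi_j}e^{-t/2}=(1-e^{-i\varphi_j})+O(t)$, exactly as in the $n=1$ computation of the paper, where the fiber is $\mathbb{C}^2$. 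Your model instead takes the full real exterior algebra $\Lambda^0\oplus\Lambda^1_{\mathbb{R}^2}\oplus\Lambda^2$ of the plane (four-dimensional), with ${g^{-1}}^*$ acting by $e^{-i\varphi_j}$ on $\Lambda^2$. That is not the induced action of a rotation (on $\Lambda^2$ it acts by the determinant, i.e.\ by $1$, and on the complexified $\Lambda^1$ its eigenvalues are $e^{\pm i\varphi_j}$), and with that model the supertrace of ${g^{-1}}^*$ at $t=0$ is the Lefschetz factor $(1-e^{i\varphi_j})(1-e^{-i\varphi_j})=2-2\cos\varphi_j$ — while with the eigenvalues exactly as you list them it is $1-(e^{-i\varphi_j}+1)+e^{-i\varphi_j}=0$. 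In neither case does the asserted ``short computation'' produce the factor $1-e^{-i\varphi_j}$ of \eqref{fermi2}; getting this constant right is precisely the content of the proposition, so the step as described fails and must be redone with the correct two-dimensional Clifford module (which is what Lemma~\ref{lem6} encodes).

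A second, smaller gap is in Step A: the claim that ``only the all-parallel term survives'' is not what your justification shows. The vanishing of the supertrace of an odd operator only removes terms of $c(w_1)\cdots c(w_{2k})$ carrying an odd number of perpendicular Clifford factors on some rotated plane; terms with an even, nonzero number of perpendicular factors do not vanish. They are harmless only because they are subleading: moving a pair of Clifford factors off the $\Lambda(\mathbb{C}^n_g)$ part raises the leading power there from $t^{n-m-k}$ to $t^{n-m-k+1}$, while the perp-plane factor remains $O(1)$ as $t\to0$ — this power counting needs to be stated. With these two points repaired, the rest of your route (Clifford expression of $e^{-tF}$ and ${g^{-1}}^*$, Berezin lemma on the fixed-point part, trivial $O(1)$ bound for $k>n-m$) is essentially the paper's proof of Proposition~\ref{p5}, which carries out the same computation in a single global Berezin supertrace rather than factor by factor.
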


\begin{proof}
If $k>n-m$, then the statement in this proposition is true, 
since the expression is smooth up to $t=0$.   Let us now obtain the asymptotics for $k\le n-m$.
Since both sides of the formula are invariant under changes of coordinates, we choose coordinates, in which $g$ is a diagonal matrix $\diag(e^{i\varphi_1},...,e^{i\varphi_m},1,...,1)$.

\begin{lemma}\label{lem6}
The operators $ F,e^{-t F}, {g^{-1}}^*\in \End \Lambda(\mathbb{C}^n)$ are expressed in terms of the Clifford multiplication as
$$
  F = \frac i 2 \sum_{j=1}^n c(e_{2j-1}e_{2j}),\quad e^{-t  F}=\prod_{j=1}^n\left(\ch \frac t 2 -ic(e_{2j-1}e_{2j})\sh \frac t 2 \right)
$$
\begin{equation}\label{g-cliff1}
{g^{-1}}^*=\prod_{j=1}^m \left(\cos\frac {\varphi_j}2+\sin\frac{\varphi_j}2 c(e_{2j-1}e_{2j})\right)e^{-i\varphi_j/2}.
\end{equation}
\end{lemma}
\begin{proof}
The proof is straightforward.
\end{proof}

We now use Lemma~\ref{lem6}  and the Berezin lemma  to obtain
\begin{multline}\label{fermi4}
 \tr_s \Bigl(c(w_1) c(w_2)...c(w_{2k}){g^{-1}}^* e^{-t  F}\Bigr) \\
= \tr_s \Bigl(c(w_1) c(w_2)...c(w_{2k})  \prod_{j=1}^m \left(\cos\frac {\varphi_j}2+ \sin\frac{\varphi_j}2 c(e_{2j-1}e_{2j})\right)e^{-i\varphi_j/2}   
 \prod_{j=1}^n\left(\ch \frac t 2 -ic(e_{2j-1}e_{2j})\sh \frac t 2 \right)\\
=
(-2i)^n\int_{\mathbb{C}^n} \sigma\left(
w_1 w_2... w_{2k}\prod_{j=1}^m \left(\cos\frac {\varphi_j}2+ \sin\frac{\varphi_j}2  e_{2j-1}e_{2j} \right)e^{-i\varphi_j/2}   
 \prod_{j=1}^n\left(\ch \frac t 2 -i e_{2j-1}e_{2j} \sh \frac t 2 \right) 
\right).
\end{multline}
Since $2k\le 2n-2m$ by assumption, the main term of the expansion of \eqref{fermi4} is of order $t^{n-m-k}$ and equal to 
\begin{multline}\label{fermi5}
(-2i)^n\int_{\mathbb{C}^n} \sigma\left(
w_1 w_2... w_{2k}\prod_{j=1}^m \left(\cos\frac {\varphi_j}2+ \sin\frac{\varphi_j}2  e_{2j-1}e_{2j} \right)e^{-i\varphi_j/2}   
 \prod_{j=1}^n\left(\ch \frac t 2 -i e_{2j-1}e_{2j} \sh \frac t 2 \right) 
\right) \\
\sim(-2i)^n (-it/2)^{n-m-k}
\int_{\mathbb{C}^n} \sigma\left(
w_1 w_2... w_{2k}\prod_{j=1}^m \left(   \sin\frac{\varphi_j}2  e_{2j-1}e_{2j} \right)e^{-i\varphi_j/2}   
\sum_J \prod_{j\in J} e_{2j-1}e_{2j}  \right) =\\
=(-2i)^n  (-it/2)^{n-m-k}
\prod_{j=1}^m \sin\frac{\varphi_j}2 e^{-i\varphi_j/2}   \times \\
\times \int_{\mathbb{C}^n} 
\sigma(w_1)\wedge\sigma( w_2)\wedge...\wedge \sigma( w_{2k}) \wedge  dx_1\wedge dp_1\wedge...\wedge dx_m\wedge dp_m \wedge
\sum_J \prod_{j\in J} dx_j\wedge dp_j \\
=(-2i)^n  (-it/2)^{n-m-k}(-1)^m\prod_{j=1}^m  \frac{1-e^{-i\varphi_j} }{2i}    \int_{\mathbb{C}^n_g} 
\sigma(w_1)\wedge\sigma( w_2)\wedge...\wedge \sigma( w_{2k})   \wedge e^\omega\\
=t^{n-m-k}2^ki^{-k}
\prod_{j=1}^m  \left( 1-e^{-i\varphi_j}\right)  \int_{\mathbb{C}^n_g} 
\sigma(w_1)\wedge\sigma( w_2)\wedge...\wedge \sigma( w_{2k})   \wedge e^{-\omega}.
\end{multline}
Here the summations $\sum_J$ are over all subsets $J\subset\{m+1,...,n\}$ of $n-m-k$ different numbers.

The proof of Proposition~\ref{p5} is now complete. 
\end{proof}

Now we substitute the asymptotics \eqref{ht6} and \eqref{fermi2} into \eqref{susy0} and obtain the desired expression \eqref{fmain1} for the Connes--Moscovici 
cocycle.

\subsection*{Step 3. Computation of the Connes--Moscovici cocycle for $\alpha\ne 0$.}

We claim that for $\alpha\ne 0$ the contribution to the Connes--Moscovici 
cocycle is equal to zero. Indeed, similar to \eqref{eq-la1} we get
\begin{multline}\label{eq-bb1}
\tr_s(a_0[D,a_1]^{[\alpha_1]}\ldots [D,a_{2k}]^{[\alpha_{2k}]}e^{-tD^2})\\
=\tr_s(T_{w_0}[D,T_{w_1}]^{[\alpha_1]}\ldots [D,T_{w_{2k}}]^{[\alpha_{2k}]}\mathbf{R}_g e^{-t(H+F)})\\
 =\tr_s(T_{w_0}[D,T_{w_1}]^{[\alpha_1]}\ldots [D,T_{w_{2k}}]^{[\alpha_{2k}]}({R}_g\otimes {g^*}^{-1})(  e^{-tH}\otimes e^{-tF})).
\end{multline}
To study the asymptotics of this supertrace, we recall the following definition of Getzler order.

\begin{definition}
Given an operator
\begin{equation}
\label{bb-1}
B=\sum_k \left(B_k T_{z_k}\right)\otimes c(a_k): \mathcal{S}(\mathbb{R}^n,\Lambda(\mathbb{C}^n))\longrightarrow \mathcal{S}(\mathbb{R}^n,\Lambda(\mathbb{C}^n)), 
\end{equation}
where $B_k$ are scalar Shubin differential operators, $z_k\in \mathbb{C}^n$, $a_k\in Cl(2n)$,
its {\em Getzler order}  is equal to
$$
 \ord B=\max_k(\ord B_k+\deg a_k).
$$
Thus, in the order we count the order in $x,\partial/\partial x$ and the Clifford filtration.\footnote{Recall that $Cl(2n)$ is a filtered algebra and we define $\ord a$ for $a\in Cl(2n)$ to be equal to the Clifford filtration $d$, where
$Cl_d(2n)\subset Cl(2n)$ is the subspace of elements spanned by the products $v_1\cdots v_d\in Cl(2n)$, where $v_j\in \mathbb{C}^{n}\subset Cl(n)$.}
\end{definition}

The Getzler orders of the operators in \eqref{eq-bb1} are computed in the following lemma.

\begin{lemma}\label{lem-get1}
One has $\ord {g^*}^{-1}= 2m$ and  $\ord [D,T_w]^{[\gamma]}\le 1+\gamma$.
\end{lemma}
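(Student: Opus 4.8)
\textbf{Proof proposal for Lemma~\ref{lem-get1}.}

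The plan is to establish the two Getzler order estimates separately, treating the fermionic operator ${g^*}^{-1}$ and the commutator powers $[D,T_w]^{[\gamma]}$ in turn, and in both cases the strategy is to reduce everything to Clifford algebra computations via the identities already collected in Lemma~\ref{lem6} and in \eqref{clifford1}. First I would handle ${g^*}^{-1}$. Passing to coordinates in which $g=\diag(e^{i\varphi_1},\dots,e^{i\varphi_m},1,\dots,1)$, formula \eqref{g-cliff1} of Lemma~\ref{lem6} expresses ${g^*}^{-1}$ as a product over $j=1,\dots,m$ of factors $\bigl(\cos\tfrac{\varphi_j}{2}+\sin\tfrac{\varphi_j}{2}\,c(e_{2j-1}e_{2j})\bigr)e^{-i\varphi_j/2}$. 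Each such factor is a scalar plus a degree-$2$ Clifford element, hence has Getzler order $2$ (the scalar part has order $0$, contributing no Shubin-differential order); multiplying $m$ of them, the top Clifford degree is $2m$, achieved by the product of all the $c(e_{2j-1}e_{2j})$ terms, and since no differential operators enter, $\ord {g^*}^{-1}=2m$ exactly. One should note that the value is independent of the choice of diagonalizing coordinates since conjugation by a unitary $h$ (acting as $\mathbf R_h$) preserves the Clifford filtration.

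Next I would bound $\ord[D,T_w]^{[\gamma]}$, where the bracket $[\cdot]$ denotes the iterated commutator with $D^2=H+F$. By \eqref{clifford1} we have $[D,T_w]=\tfrac1{\sqrt2}T_w\,c(w)$, an operator of Getzler order exactly $1$: the Heisenberg--Weyl factor $T_w$ has Shubin-differential order $0$ and $c(w)$ is a degree-$1$ Clifford element. It remains to see that each commutator with $D^2=H+F$ raises the Getzler order by at most $1$. For this I would argue on the two summands: the commutator with $F$ is handled by writing $F=\tfrac i2\sum_j c(e_{2j-1}e_{2j})$ (Lemma~\ref{lem6}), so $[F,\cdot]$ acts purely in the Clifford algebra and, by the standard fact that the bracket of a degree-$2$ element with a degree-$d$ element in a Clifford (equivalently, quantized exterior) algebra lands in degree $\le d$, it does not increase Getzler order at all; the commutator with $H=\tfrac12\sum_j(x_j^2-\partial_{x_j}^2)$ is the scalar part, and as already recorded in the one-dimensional discussion (the relation $[H,T_z]=T_z\cdot(\text{operator of order }1)$), commuting $H$ past $T_w$ produces $T_w$ times a Shubin differential operator of order $1$, which raises the Shubin-differential part of the Getzler order by exactly $1$ and leaves the Clifford degree unchanged. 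Composing: after $\gamma$ brackets the Getzler order of $[D,T_w]^{[\gamma]}$ is at most $1+\gamma$, as claimed. (Here I would keep track of the fact that iterated brackets may spread $T_w$ over several factors of the form $T_w\cdot(\text{polynomial in }x,\partial_x)$, but the total Shubin order accumulated is still at most $\gamma$, since each application of $[H,\cdot]$ contributes at most one.)

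The step I expect to be the main obstacle is making precise and uniform the claim that $[H,\cdot]$ raises the Shubin-differential order by at most one \emph{through iterations}, i.e.\ controlling $\mathrm{ad}_H^{\gamma}$ applied to $T_w c(w)$ rather than just a single commutator. A single commutator is immediate from \eqref{eq-d3}-type formulae, but after $\gamma$ steps one must verify that no cancellation-free growth beyond order $\gamma$ occurs; the clean way is to observe that $T_w^{-1}\,\mathrm{ad}_H^{\gamma}(T_w)\,$ lies in the Shubin differential operators of order $\le\gamma$ because $T_w^{-1}H T_w = H + (\text{first-order Shubin operator})$ by the explicit conjugation formula, so $\mathrm{ad}_H$ acts on the algebra of Shubin differential operators as a derivation that increases order by $\le 1$, and iterating a derivation of this type $\gamma$ times increases order by $\le\gamma$. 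The $F$-part and the Clifford bookkeeping are then routine, and combining the two estimates with the product rule for Getzler order in \eqref{bb-1} finishes the lemma.
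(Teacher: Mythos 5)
Your proposal is correct and follows essentially the same route as the paper: the first claim is read off from the Clifford formula \eqref{g-cliff1}, and the second is proved by showing that each commutator with $D^2=H+F$ raises Getzler order by at most one, splitting into the Shubin part ($[H,\cdot]$ contributes at most one order) and the Clifford part ($[F,c(a)]$ does not exceed the needed bound), then iterating. Your resolution of the iteration issue via $T_w^{-1}HT_w=H+(\text{order }1)$ is just a reformulation of the paper's induction over operators of the general form \eqref{bb-1}, and your sharper observation that $[F,\cdot]$ does not raise the Clifford degree at all is a harmless strengthening of the paper's estimate $\ord[F,c(a_k)]\le\ord a_k+1$.
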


\begin{proof}
The first equality follows from \eqref{g-cliff1}. The second estimate is proved by induction. Indeed, if $\gamma=0$, then \eqref{clifford1} shows that $\ord [D,T_w]\le 1$. Let us now show that $\ord[D^2,B]\le \ord B+1$ for all $B$ as in \eqref{bb-1}. We have:
\begin{equation}\label{bbc-1}
[D^2,B]=[H+F,B]=[H,B]+[F,B] 
=\sum_k\left( [H,B_kT_{z_k}]\otimes  c(a_k)+ B_kT_{z_k}\otimes [F,c(a_k)]\right).
\end{equation}
It follows from the properties of Shubin operators that $\ord [H,B_kT_{z_k}]\le \ord B_k+1$, and from the properties of the Clifford multiplication that $\ord[F,c(a_k)]\le \ord a_k+1$. These estimates and \eqref{bbc-1} imply the desired estimate
$$
\ord [D^2,B]\le \max_k(\ord B_k+\ord a_k+1)=\ord B+1.
$$
The proof of Lemma~\ref{lem-get1} is now complete.
\end{proof}

\begin{lemma}\label{lem-get2}
Given an operator $B$ as in \eqref{bb-1}, we have
\begin{equation}\label{eq-esta2}
\tr_s(B\mathbf{R}_ge^{-tD^2})=
\left\{ 
\begin{array}{l}
O(t^{+\infty}) \text{ if the fixed point set of $w\mapsto gw+z$ is trivial}\vspace{2mm}\\
O(t^{-\ord B/2}) \text{ otherwise}.
\end{array}
\right.
\end{equation} 
\end{lemma}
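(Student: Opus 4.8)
The plan is to reduce the supertrace $\tr_s(B\mathbf{R}_g e^{-tD^2})$ to a bosonic heat trace on $L^2(\mathbb{R}^n)$ tensored with a fermionic supertrace on $\Lambda(\mathbb{C}^n)$, exactly as in the computation leading to \eqref{susy0}, and then combine the bosonic asymptotics from Proposition~\ref{prop4} with a Getzler-order bookkeeping for the fermionic part. First I would write $B=\sum_k (B_k T_{z_k})\otimes c(a_k)$ as in \eqref{bb-1} and use $e^{-tD^2}=e^{-tH}\otimes e^{-tF}$ together with $\mathbf{R}_g=R_g\otimes{g^*}^{-1}$ and the commutation relation \eqref{cr12} to move all Heisenberg--Weyl factors past $R_g$; this produces, for each summand, a bosonic operator $B_k'T_{z}R_g$ acting on $L^2(\mathbb{R}^n)$ (with $z$ the total shift and $B_k'$ a Shubin operator of the same order as $B_k$, possibly conjugated by $R_h$ in the diagonalization $g=hg_0h^{-1}$) and a fermionic endomorphism $c(a_k'){g^*}^{-1}e^{-tF}$ of $\Lambda(\mathbb{C}^n)$. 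The supertrace then factors as $\tr(B_k'T_zR_ge^{-tH})\cdot\tr_s(c(a_k'){g^*}^{-1}e^{-tF})$.

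Next I would invoke \eqref{ht6b}: if the fixed point set of $w\mapsto gw+z$ is trivial the bosonic factor is $O(t^{+\infty})$, so the whole expression is $O(t^{+\infty})$ — this gives the first line of \eqref{eq-esta2}. Otherwise the bosonic factor is $O(t^{-(n-m+\ord B_k/2)})$. For the fermionic factor I would use Lemma~\ref{lem6} to expand ${g^*}^{-1}e^{-tF}$ in the Clifford algebra, observe that $e^{-tF}=\prod_j(\ch\frac t2 - ic(e_{2j-1}e_{2j})\sh\frac t2)$ contributes a factor $t$ only through each $\sh\frac t2$, and that $\tr_s$ annihilates any Clifford element of degree $<2n$ (by the Berezin lemma). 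A Clifford element of degree $d=\deg a_k + \ord {g^*}^{-1} = \deg a_k + 2m$ is already supplied by $c(a_k'){g^*}^{-1}$; to reach top degree $2n$ one must extract degree $2n-d$ from the $n-m$ available factors $e_{2j-1}e_{2j}$ with $j>m$ inside $e^{-tF}$, each of which costs one power of $t$. Hence $\tr_s(c(a_k'){g^*}^{-1}e^{-tF})=O\bigl(t^{\,(2n-2m-\deg a_k)/2}\bigr)=O\bigl(t^{\,n-m-\deg a_k/2}\bigr)$, provided $\deg a_k\le 2n-2m$; if $\deg a_k>2n-2m$ it is $O(1)$, and in all cases it is $O\bigl(t^{\,n-m-\deg a_k/2}\bigr)$ once we allow the exponent to be clipped at $0$ (which is absorbed harmlessly).

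Multiplying the two estimates, the net power of $t$ for the $k$-th summand is at least $-(n-m+\ord B_k/2)+(n-m-\deg a_k/2) = -(\ord B_k+\deg a_k)/2 = -\ord (B_kT_{z_k}\otimes c(a_k))/2$, using the definition of Getzler order. Taking the maximum over $k$ and recalling $\ord B=\max_k(\ord B_k+\deg a_k)$ yields $\tr_s(B\mathbf{R}_ge^{-tD^2})=O(t^{-\ord B/2})$, which is the second line of \eqref{eq-esta2}. The main obstacle is the fermionic degree count: one must be careful that the conjugation by $R_h$ in the diagonalization does not change $\ord B_k$ (it doesn't, since $R_h\in\Mp(n)$ and by the Egorov theorem of Section~\ref{sec-shubin} conjugation preserves the Shubin order) and that the degree of $a_k'=h^*a_k$ equals $\deg a_k$ (clear, since $h^*$ is an algebra automorphism preserving the Clifford filtration), so the Getzler order is genuinely invariant under the reduction; once this is checked the estimate follows by simply adding exponents.
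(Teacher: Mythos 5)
Your proposal is correct and follows essentially the same route as the paper: factor the supertrace termwise into the bosonic trace $\tr(B_kT_{z_k}R_g e^{-tH})$ times the fermionic supertrace $\tr_s(c(a_k){g^*}^{-1}e^{-tF})$, bound the former by \eqref{ht6b} and the latter by $O(t^{\dim\mathbb{C}^n_g-\deg a_k/2})$, and add exponents to get $-\ord B/2$. The only cosmetic differences are that the paper simply cites Proposition~\ref{p5} for the fermionic estimate (your Lemma~\ref{lem6}/Berezin degree count is exactly its proof), and that the $R_h$-diagonalization and the shuffling of Heisenberg--Weyl factors you describe are not needed for the lemma itself, since $B$ is already given in the factored form \eqref{bb-1}.
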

\begin{proof}
We have
\begin{multline}
\label{comba3}
\tr_s(B\mathbf{R}_ge^{-tD^2})=\sum_k \tr_s\bigl((B_kT_{z_k}\otimes c(a_k))( {R}_g\otimes  {g^*}^{-1})(e^{-tH}\otimes e^{-tF})\bigr)\\
= \sum_k \tr (B_kT_{z_k}{R}_ge^{-tH}) \tr_s( c(a_k)  {g^*}^{-1}e^{-tF}).
\end{multline}
On the one hand, the trace of scalar operators is estimated by Proposition~\ref{prop4}:
\begin{equation}
\label{eq-bos1}
\tr (B_kT_{z_k}{R}_ge^{-tH})=
\left\{ 
\begin{array}{l}
O(t^{+\infty}) \text{ if the fixed point set of $w\mapsto gw+z$ is trivial}\vspace{2mm}\\
O(t^{-\dim\mathbb{C}^n_g-\ord B_k/2}) \text{ otherwise.}
\end{array}
\right.
\end{equation}
On the other hand,  the supertrace in \eqref{comba3} is computed by Proposition~\ref{p5}:
\begin{equation}
\label{eq-ferm1}
\tr_s( c(a_k)  {g^*}^{-1}e^{-tF})=
\left\{ 
\begin{array}{l}
O(1) \text{ if $\ord a_k$ is odd or $\dim\mathbb{C}^n_g-\ord a_k/2<0$}\vspace{2mm}\\
O(t^{\dim\mathbb{C}^n_g-\ord a_k/2}) \text{ otherwise.}
\end{array}
\right.
\end{equation}
Estimating the traces in \eqref{comba3} using \eqref{eq-bos1} and \eqref{eq-ferm1}, we obtain the desired estimate
\eqref{eq-esta2}.
\end{proof}

Now, we see from Lemma~\ref{lem-get1} that
$$
 \ord [D,a_1]^{[\alpha_1]},..., [D,a_{2k}]^{[\alpha_{2k}]}\le 2k+|\alpha|.
$$
Thus, Lemma~\ref{lem-get2} implies that
$$
\tr_s(a_0[D,a_1]^{[\alpha_1]}... [D,a_{2k}]^{[\alpha_{2k}]}e^{-tD^2})=
O(t^{-(2k+|\alpha|)/2}).
$$
Hence
$$
t^{k+|\alpha|}\tr_s(a_0[D,a_1]^{[\alpha_1]},..., [D,a_{2k}]^{[\alpha_{2k}]}e^{-tD^2})=
O(t^{ |\alpha|/2}) 
$$
and the constant term in the asymptotic expansion is equal to zero. This implies the desired statement that
the terms in the Connes--Moscovici 
cocycle for $\alpha\ne 0$ are equal to zero. 

The proof of Theorem~\ref{th-main3} is now complete.

\section{Cyclic Cocycles}\label{sec4}

In this section, we show that  each component of the periodic cyclic cocycle in Theorem~\ref{th-main3} is actually a cyclic cocycle. Moreover, each of these cocycles is a sum of cyclic cocycles localized  at the conjugacy classes in the semidirect product of  $\mathbb{C}^n$ and the unitary group $U(n)$ which we denote by $\mathbb{C}^n\rtimes U(n)$. Here we use the approach due to Connes to define cyclic cocycles as characters of cycles, see~\cite{Con1}.

\subsection*{Noncommutative differential forms.}

We consider $\A$ as a subalgebra of  the 
differential graded algebra $\Omega^*\subset 
\mathcal{B}L^2(\mathbb{R}^n,\Lambda(\mathbb{R}^{2n}))$ consisting of all operators $a$
that are  finite sums 
\begin{equation}
\label{eq-diff1} 
a=\sum_{k}  u_k T_{z_k}\mathbf{R}_{g_k},\quad z_k\in\mathbb{C}^n, g_k\in U(n), u_k\in \Lambda(\mathbb{R}^{2n}).
\end{equation}
This algebra is graded by the degree of forms.  
We define the operator $d:\Omega^*\to \Omega^{*+1}$ by
\begin{equation}
\label{eq-diff2}
 d (u T_z\mathbf{R}_g  )  =(-1)^{\deg u}u\sigma(z)T_z \mathbf{R}_g,\quad\text{ where }\sigma(z)=\im z dx+\re z dp.
\end{equation}
\begin{lemma}
\label{lem11}
The operator $d$ is  a graded differentiation on $\Omega^*$. More precisely, the following equalities hold:
$$
d^2a=0,\quad d(a_1 a_2)=(da_1) a_2+(-1)^{\deg a_1} a_1 da_2,\quad\text{ for all }a,a_1,a_2\in\Omega^*.
$$
\end{lemma}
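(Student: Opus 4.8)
The plan is to reduce everything to computations on the generators $u\,T_z\mathbf{R}_g$ and to exploit systematically the commutation relations between the three types of factors occurring in \eqref{eq-diff1}: exterior multiplication by a form $u\in\Lambda(\mathbb{R}^{2n})$, the Heisenberg--Weyl operators $T_z$, and the metaplectic operators $\mathbf{R}_g$. First I would note that $d$ is well defined: the operators $T_z\mathbf{R}_g$, $z\in\mathbb{C}^n$, $g\in U(n)$, are linearly independent in $\mathcal{B}L^2(\mathbb{R}^n)$, and exterior multiplication by the elements of a basis of $\Lambda(\mathbb{R}^{2n})$ gives operators independent on the form factor, so every $a\in\Omega^*$ has a unique expression $\sum_k u_k T_{z_k}\mathbf{R}_{g_k}$ with pairwise distinct pairs $(z_k,g_k)$; since the right-hand side of \eqref{eq-diff2} depends linearly on $u$, formula \eqref{eq-diff2} then defines a linear map $d\colon\Omega^*\to\Omega^{*+1}$ without ambiguity. (Alternatively one may phrase the whole argument invariantly, by checking directly that $d$ respects the defining relations of $\Omega^*$.)

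Next I would assemble the elementary relations needed: (i) $T_z$ commutes with exterior multiplication, since they act on different tensor factors; (ii) $\mathbf{R}_g\,u=(\rho(g)u)\,\mathbf{R}_g$, where $\rho(g)={g^*}^{-1}$ is the induced (pushforward) action of $g$ on $\Lambda(\mathbb{R}^{2n})$, an algebra automorphism, with $g\mapsto\rho(g)$ a representation; (iii) $\mathbf{R}_g T_z=T_{gz}\mathbf{R}_g$, which is \eqref{cr12}; (iv) $T_{z_1}T_{z_2}=e^{-i\im(z_1,z_2)/2}T_{z_1+z_2}$, which is \eqref{eq-comm1}; (v) $\mathbf{R}_{g_1}\mathbf{R}_{g_2}=\mathbf{R}_{g_1g_2}$; (vi) $\sigma$ is real-linear and $U(n)$-equivariant, $\sigma(gz)=\rho(g)\,\sigma(z)$; and (vii) $\sigma(z)\wedge\sigma(z)=0$, since $\sigma(z)$ is a $1$-form. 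Combining (i)--(v) yields the normal form of a product of two generators,
$$ (u_1 T_{z_1}\mathbf{R}_{g_1})(u_2 T_{z_2}\mathbf{R}_{g_2})=e^{-i\im(z_1,g_1 z_2)/2}\,\bigl(u_1\wedge\rho(g_1)u_2\bigr)\,T_{z_1+g_1 z_2}\,\mathbf{R}_{g_1 g_2}, $$
again a single generator, of form-degree $\deg u_1+\deg u_2$, with the scalar phase absorbed into the form coefficient.

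The identity $d^2a=0$ is then immediate: applying \eqref{eq-diff2} twice to $u T_z\mathbf{R}_g$ produces $(-1)^{\deg u+1}(-1)^{\deg u}\,u\wedge\sigma(z)\wedge\sigma(z)\,T_z\mathbf{R}_g=-u\wedge\sigma(z)\wedge\sigma(z)\,T_z\mathbf{R}_g=0$ by (vii). For the graded Leibniz rule it suffices, by bilinearity, to take $a_i=u_i T_{z_i}\mathbf{R}_{g_i}$, $i=1,2$. Using the product normal form together with the real-linearity and equivariance (vi) of $\sigma$, one gets
$$ d(a_1 a_2)=(-1)^{\deg u_1+\deg u_2}\,e^{-i\im(z_1,g_1 z_2)/2}\,\bigl(u_1\wedge\rho(g_1)u_2\bigr)\wedge\bigl(\sigma(z_1)+\rho(g_1)\sigma(z_2)\bigr)\,T_{z_1+g_1 z_2}\mathbf{R}_{g_1 g_2}. $$
Computing $(da_1)a_2$ and $(-1)^{\deg a_1}a_1(da_2)$ by the same product rule, then moving the degree-one factor $\sigma(z_1)$ past $\rho(g_1)u_2$ in the first term (which produces the sign $(-1)^{\deg u_2}$) and using that $\rho(g_1)$ is an algebra homomorphism in the second, one checks that the two contributions add up to exactly the displayed expression for $d(a_1 a_2)$.

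I expect the only genuinely non-mechanical point to be the $U(n)$-equivariance of the symbol map in (vi): one must verify that identifying $z=p+ix\leftrightarrow(x,p)\in\mathbb{R}^{2n}$ and sending this point to the covector $\sigma(z)=\sum_j(x_j\,dx_j+p_j\,dp_j)$ intertwines the complex-linear $U(n)$-action on $\mathbb{C}^n$ with the natural pushforward action on $\Lambda^1(\mathbb{R}^{2n})$. This is precisely where one uses that $U(n)=\Sp(n)\cap O(2n)$ acts by Euclidean isometries of $\mathbb{R}^{2n}$, so the metric identification of vectors with covectors is equivariant; everything else in the proof is bookkeeping of signs.
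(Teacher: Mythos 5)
Your proposal is correct and follows essentially the same route as the paper: check $d^2=0$ on generators via $\sigma(z)\wedge\sigma(z)=0$, and prove the graded Leibniz rule by bringing a product of two generators into normal form using \eqref{eq-comm1}, \eqref{cr12} and the multiplicativity of ${g^*}^{-1}$, together with the key equivariance $\sigma(gz)={g^*}^{-1}\sigma(z)$ of \eqref{eq-equiv}. The only (harmless) difference is that you justify \eqref{eq-equiv} conceptually, from $U(n)=\Sp(n)\cap O(2n)$ acting by isometries so that metric duality intertwines the actions, whereas the paper verifies it by the explicit matrix computation \eqref{eq-left3}--\eqref{eq-right3}.
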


\begin{proof}  
The first equality is seen as follows
$$
d (d(u T_z\mathbf{R}_g  )   )=d((-1)^{\deg u} u\sigma(z) T_z \mathbf{R}_g    )=-u  \sigma(z)\sigma(z)T_z\mathbf{R}_g     =
u \cdot 0\cdot T_z \mathbf{R}_g     =0.
$$
Before proving the second, we first show that   
\begin{equation}
\label{eq-equiv}
{g^*}^{-1}(\sigma(z))=\sigma(gz) \quad \text{ for all $g\in {U}(n)$ and $z\in\mathbb{C}^n$}.
\end{equation}
In fact, given $g=B+iA\in  {U}(n)$, where $A$ and $B$ are real matrices, we have on the one hand 
\begin{multline}
\label{eq-left3}
\sigma(gz)=\re(gz d\overline{(p+ix)})=\sum_{kl}\re\left((B_{kl}+iA_{kl})(\re z_l+i\im z_l)(dp_k-idx_k)\right)\\
=\sum_{kl}\left(
B_{kl}(\re z_l dp_k+\im z_l dx_k)+A_{kl}(-\im z_l dp_k+\re z_l dx_k)\right).
\end{multline}
On the other hand, 
$g^{-1}=B^t-iA^t$ with the transposed matrices $A^t$ and $B^t$, and
$$
g^{-1}\begin{pmatrix}
 x \\
 p
\end{pmatrix}=
\begin{pmatrix}
 B^t & -A^t\\
 A^t & B^t
\end{pmatrix}
\begin{pmatrix}
 x \\
 p
\end{pmatrix}.
$$
Now  \eqref{eq-equiv} follows from the fact that 
\begin{multline}
\label{eq-right3}
{g^*}^{-1}\sigma(z)= \sum_l (\im z_l ){g^*}^{-1}(dx_l)+\sum_l (\re z_l) {g^*}^{-1}(dp_l)\\
=\sum_{kl} (\im z_l)(B_{kl}dx_k-A_{kl}dp_k)+\sum_{kl}(\re z_l )(A_{kl}dx_k+B_{kl}dp_k)
\end{multline}
which coincides with \eqref{eq-left3}.

As for the second statement: Given $a_1=u_1T_{z_1}\mathbf{R}_{g_1}  $ and  $a_2= u_2T_{z_2}\mathbf{R}_{g_2} $, we find  that 
\begin{eqnarray*}
\lefteqn{d(a_1a_2)= d(( u_1T_{z_1}\mathbf{R}_{g_1} )( u_2 T_{z_2}\mathbf{R}_{g_2} ))=d(u_1{g_1^*}^{-1}u_2T_{z_1}\mathbf{R}_{g_1}T_{z_2}\mathbf{R}_{g_2}   )}\\
&=&e^{-i\im(z_1,g_1z_2)/2}d(u_1{g_1^*}^{-1}u_2T_{z_1+g_1z_2}\mathbf{R}_{g_1g_2}   )\\
&=&(-1)^{\deg u_1+\deg u_2}e^{-i\im(z_1,g_1z_2)/2}u_1{g_1^*}^{-1}u_2\sigma(z_1+g_1z_2)T_{z_1+g_1z_2} \mathbf{R}_{g_1g_2}   \\
&=&(-1)^{\deg u_1 } u_1\sigma(z_1 ){g_1^*}^{-1}u_2T_{z_1}T_{g_1z_2} \mathbf{R}_{g_1g_2}+
(-1)^{\deg u_1+\deg u_2} u_1{g_1^*}^{-1}u_2\sigma(g_1z_2)T_{z_1}T_{g_1z_2} \mathbf{R}_{g_1g_2}\\
&=&(-1)^{\deg u_1 } u_1\sigma(z_1 )T_{z_1}\mathbf{R}_{g_1 } u_2T_{ z_2} \mathbf{R}_{ g_2}+
(-1)^{\deg u_1 } u_1T_{z_1}\mathbf{R}_{g_1 }(-1)^{\deg u_2 } u_2\sigma( z_2)T_{ z_2} \mathbf{R}_{ g_2}\\
&=&(da_1) a_2+(-1)^{\deg a_1} a_1 da_2.
\end{eqnarray*}
\end{proof}

\subsection*{Localized traces.}

Let us fix a pair $(z_0,g_0)\in \mathbb{C}^n\times U(n)$ such that the fixed point set of the affine mapping
$$
\mathbb{C}^n\longrightarrow \mathbb{C}^n,\quad w\longmapsto g_0w+z_0
$$
is not empty. Then we define the functional 
$$
\tau_{z_0,g_0}: \Omega^* \longrightarrow \mathbb{C}
$$ 
$$
\tau_{z_0,g_0}\left(\sum_{z,g} u_{z,g}T_z\mathbf{R}_g  \right) =
\sum_{(z,g)\in \langle (z_0,g_0)\rangle} 
 \prod_{j=1}^m  e^{\frac i 4|(z,e_j(g))|^2\ctg\varphi_j(g)/2 }
\int_{\mathbb{C}^n_g} 
u_{z,g}   \wedge e^{-\omega}.
$$
Here $\langle (z_0,g_0)\rangle\subset \mathbb{C}^n\rtimes U(n)$ stands for the conjugacy class of $(z_0,g_0)$, $m=n-\dim \mathbb{C}^n_g$, $e_j(g)$ stand for the eigenvectors of $g$ with eigenvalues $e^{i\varphi_j(g)}\ne 1$.

\begin{lemma}
\label{lem12}
The functional $\tau_{z_0,g_0}$ is a closed graded trace on the differential graded algebra $\Omega^*$. More precisely, one has
$$
\tau_{z_0,g_0} (da)=0\qquad\text{ for all }a\in\Omega^*,
$$
$$
\tau_{z_0,g_0}(a_1a_2)=(-1)^{\deg a_1\deg a_2}\tau_{z_0,g_0}(a_2a_1)\quad 
\text{ for all }a_1,a_2\in\Omega^*.
$$
\end{lemma}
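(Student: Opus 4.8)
The plan is to verify the two defining properties of a closed graded trace separately, reducing each to a computation on the homogeneous generators $u\,T_z\mathbf{R}_g$ since both sides of every identity are linear in $a$ (and bilinear in $a_1,a_2$).

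\textbf{Closedness.} For $a = u\,T_z\mathbf{R}_g$ we have $da = (-1)^{\deg u}u\,\sigma(z)\,T_z\mathbf{R}_g$, which is supported on the same pair $(z,g)$. Thus $\tau_{z_0,g_0}(da)$ is a sum over $(z,g)\in\langle(z_0,g_0)\rangle$ of terms involving $\int_{\mathbb{C}^n_g} u\wedge\sigma(z)\wedge e^{-\omega}$. The point is that whenever $(z,g)$ lies in the conjugacy class of $(z_0,g_0)$ and the fixed-point set is nonempty, $z$ is orthogonal to $\mathbb{C}^n_g$ (as noted after Proposition~\ref{prop4}), hence $\sigma(z)$ restricts to $0$ on $\mathbb{C}^n_g$. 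Since the Berezin integral $\int_{\mathbb{C}^n_g}$ only sees the component of a form along $\mathbb{C}^n_g$, every such term vanishes, giving $\tau_{z_0,g_0}(da)=0$.

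\textbf{Graded trace property.} Take $a_1 = u_1 T_{z_1}\mathbf{R}_{g_1}$ and $a_2 = u_2 T_{z_2}\mathbf{R}_{g_2}$. Using \eqref{cr12}, \eqref{eq-comm1} and the action \eqref{eq-rg2} one computes, exactly as in the proof of Lemma~\ref{lem11},
$$
a_1a_2 = e^{-i\im(z_1,g_1z_2)/2}\,u_1\,{g_1^*}^{-1}u_2\,T_{z_1+g_1z_2}\mathbf{R}_{g_1g_2},
$$
and symmetrically for $a_2a_1$ with indices swapped. First one checks the pairs are conjugate: $(z_1+g_1z_2,\,g_1g_2)$ and $(z_2+g_2z_1,\,g_2g_1)$ are conjugate in $\mathbb{C}^n\rtimes U(n)$ via $(z_1,g_1)$ (or $(z_2,g_2)$), and the whole products $a_1a_2$, $a_2a_1$ are supported over the conjugacy-class orbit determined by which $(z,g)$ appear in $a_1,a_2$; so $\tau_{z_0,g_0}$ picks up matching contributions or none at all. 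Then, on the pair that does contribute, the prefactor $\prod_j e^{\frac i4|(z,e_j(g))|^2\ctg\varphi_j(g)/2}$ is a conjugation-invariant function of $(z,g)$, hence agrees for the two conjugate pairs. It remains to match the Berezin integrals: one must show
$$
\int_{\mathbb{C}^n_{g_1g_2}} e^{-i\im(z_1,g_1z_2)/2}u_1\,{g_1^*}^{-1}u_2\wedge e^{-\omega}
=(-1)^{\deg a_1\deg a_2}\!\!\int_{\mathbb{C}^n_{g_2g_1}} e^{-i\im(z_2,g_2z_1)/2}u_2\,{g_2^*}^{-1}u_1\wedge e^{-\omega}.
$$
Here the sign $(-1)^{\deg a_1\deg a_2}=(-1)^{\deg u_1\deg u_2}$ comes from commuting the forms $u_1$ and $u_2$ past each other under the Berezin integral (which is graded-commutative on $\Lambda(\mathbb{C}^n_g)$), the factor ${g^*}^{-1}$ is absorbed using the equivariance ${g^*}^{-1}\omega=\omega$ for $g\in U(n)$ together with the change-of-variables invariance of $\int_{\mathbb{C}^n_g}$ under the isometry identifying $\mathbb{C}^n_{g_1g_2}$ with $\mathbb{C}^n_{g_2g_1}$, and the two phase factors $e^{-i\im(z_1,g_1z_2)/2}$ and $e^{-i\im(z_2,g_2z_1)/2}$ are equal since $\im(z_1,g_1z_2)=\im(z_1\overline{g_1z_2})=\im(\overline{g_1^{-1}z_1}\,\overline{z_2})^{-}=\im(z_2, g_2 z_1)$ after using unitarity of $g_1$ and $g_2=g_1^{-1}(g_1g_2)$-type manipulations (more directly, $\im(z_1,g_1z_2)=-\im(g_1z_2,z_1)=-\im(z_2,g_1^{-1}z_1)$, and one relates $g_1^{-1}z_1$ to $g_2 z_1$ on the common orbit).

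\textbf{Main obstacle.} The routine parts are the algebra identity for the product and the invariance of the Berezin integral under isometries. The genuine technical point — and the step I would spend the most care on — is the bookkeeping of \emph{which} conjugate pairs contribute and showing the sums over the conjugacy class on the two sides are term-by-term equal: one needs that conjugation by $(z_1,g_1)$ sets up a bijection between the pairs $(z,g)$ appearing in $a_1a_2\big|_{\langle(z_0,g_0)\rangle}$ and those in $a_2a_1\big|_{\langle(z_0,g_0)\rangle}$, and that under this bijection the fixed-point sets, eigenvalue data $\varphi_j(g)$, the orthogonality $z\perp\mathbb{C}^n_g$, the prefactors, and the integrands all correspond correctly (including the reconciliation of the two phase factors). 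Once this correspondence is pinned down, the graded trace identity follows by summing the matched contributions.
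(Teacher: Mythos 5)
Your closedness argument and your treatment of the Berezin integrals (pulling back along $g_1^{-1}:\mathbb{C}^n_{g_1g_2}\to\mathbb{C}^n_{g_2g_1}$ and using ${g_1^*}^{-1}{g_2^*}^{-1}=1$ on the fixed-point set, producing the sign $(-1)^{\deg u_1\deg u_2}$) are essentially the paper's proof. The gap is in the step you treat as routine: matching the scalar factors. You claim (i) that the prefactor $\prod_j e^{\frac i4|(z,e_j(g))|^2\ctg(\varphi_j(g)/2)}$ is a conjugation-invariant function of $(z,g)$ and (ii) that the Heisenberg phases satisfy $\im(z_1,g_1z_2)=\im(z_2,g_2z_1)$. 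Both statements are false in general, even for contributing terms. Take $n=1$, $g_1=g_2=e^{i\alpha}$ with $\sin\alpha\neq 0$, $z_1=1$, $z_2=i$: then $\im(z_1,g_1z_2)=-\cos\alpha$ while $\im(z_2,g_2z_1)=\cos\alpha$, and $|(z_1+g_1z_2,e_1)|^2=2-2\sin\alpha$ while $|(z_2+g_2z_1,f_1)|^2=2+2\sin\alpha$ (here $g_1g_2=e^{2i\alpha}\neq1$, so the affine map has a fixed point and the term does contribute). Your ``more direct'' manipulation also breaks down: unitarity gives $(z_1,g_1z_2)=(g_1^{-1}z_1,z_2)$, whose imaginary part is $-\im(z_2,g_1^{-1}z_1)$, and there is no identification of $g_1^{-1}z_1$ with $g_2z_1$ that rescues the claimed equality.

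What is actually true — and what the paper's proof establishes as its central computation — is that only the \emph{product} of phase and prefactor is invariant under the swap: with $f_j=g_1^{-1}e_j$ one must prove
$$
\frac{\im(z_2,g_2z_1)-\im(z_1,g_1z_2)}2+\frac14\sum_{j=1}^m\Bigl(|(z_1+g_1z_2,e_j)|^2-|(z_2+g_2z_1,f_j)|^2\Bigr)\ctg(\varphi_j/2)=0,
$$
i.e. identity \eqref{eq-long3}. This requires decomposing $z_1$ and $z_1+g_1z_2$ along the eigenvectors of $g_1g_2$ (using that $z_1+g_1z_2$ has no component in $\mathbb{C}^n_{g_1g_2}$, by the fixed-point assumption) and the trigonometric identities $\sin\varphi-(1-\cos\varphi)\ctg(\varphi/2)=0$ and $(e^{-i\varphi}-1)\ctg(\varphi/2)=-i(e^{-i\varphi}+1)$; in the example above the two discrepancies are $\cos\alpha$ and $-\cos\alpha$ and cancel. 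Without this compensation argument your term-by-term matching of the two sums does not go through, so the trace property is not yet proved.
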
 
\begin{proof}
1. Given $a=uT_z\mathbf{R}_g$, we have  
\begin{equation*}
  \tau_{z_0,g_0} (da)=(-1)^{\deg u}\tau_{z_0,g_0} (u\sigma(z)T_z \mathbf{R}_g)={\rm Const}\cdot\int_{\mathbb{C}^n_g}
  u \sigma(z)e^{-\omega}=0,
\end{equation*}
where we used the assumption that the fixed point set of $w\mapsto gw+z$ is nonempty, which is equivalent to  $\sigma(z)|_{\mathbb{C}^n_g}=0.$ Indeed, if we choose the basis in which $g$ is diagonal, then $\mathbb{C}^n_g=\{(0,...,w_{m+1},...,w_n)\}$. Hence, the fixed point set of $w\mapsto gw+z$ is nonempty if and only if $z_j=0$ whenever  $j>m$.
Clearly, this condition is equivalent to $\sigma(z)|_{\mathbb{C}^n_g}=0.$

2. Given $a_j=u_jT_{z_j}\mathbf{R}_{g_j}$, $j=1,2$, we denote by 
$\gamma\subset \mathbb{C}^n\rtimes U(n)$ the conjugacy class of $(z_2,g_2)(z_1,g_1)$
which coincides with that of $(z_1,g_1)(z_2,g_2)$. 
Then we have
\begin{multline}
\label{eq-long1}
\tau_\gamma(a_1a_2)=\tau_\gamma(u_1T_{z_1}\mathbf{R}_{g_1}u_2T_{z_2}\mathbf{R}_{g_2}) =
e^{-i\im(z_1,g_1z_2)/2}\tau_\gamma(u_1{g_1^*}^{-1}u_2T_{z_1+g_1z_2}\mathbf{R}_{g_1g_2}) \\ 
=e^{-i\im(z_1,g_1z_2)/2}\prod_{j=1}^m  e^{\frac i 4|(z_1+g_1z_2,e_j )|^2\ctg(\varphi_j/2) }
\int_{\mathbb{C}^n_{g_1g_2}} 
u_1{g_1^*}^{-1} u_2    e^{-\omega},
\end{multline}
where the $e_j$ are the eigenvectors of $g_1g_2$ with eigenvalues $e^{i\varphi_j}\ne 1$.
Similarly, we get
\begin{equation}
\label{eq-long2} 
\tau_\gamma(a_2a_1)= e^{-i\im(z_2,g_2z_1)/2}\prod_{j=1}^m  e^{\frac i 4|(z_2+g_2z_1,f_j )|^2\ctg(\varphi_j/2) } 
\int_{\mathbb{C}^n_{g_2g_1}} 
u_2{g_2^*}^{-1} u_1    e^{-\omega},
\end{equation}
where the $f_j$ are the eigenvectors of $g_2g_1$ with eigenvalues $e^{i\varphi_j}\ne 1$. 

To compare~\eqref{eq-long1} with~\eqref{eq-long2}, we first compare the integrals. We claim that 
\begin{equation}
\label{eq-int3}
 \int_{\mathbb{C}^n_{g_1g_2}} 
u_1{g_1^*}^{-1} u_2    e^{-\omega}=(-1)^{\deg u_1\deg u_2}\int_{\mathbb{C}^n_{g_2g_1}} 
u_2{g_2^*}^{-1} u_1    e^{-\omega}.
\end{equation}
Indeed, since $g_1^{-1}$ defines an isomorphism $\mathbb{C}^n_{g_1g_2}\to\mathbb{C}^n_{g_2g_1}$, we have
\begin{multline}
\int_{\mathbb{C}^n_{g_2g_1}} 
u_2{g_2^*}^{-1} u_1    e^{-\omega}=\int_{\mathbb{C}^n_{g_1g_2}} ({g_1^*}^{-1}u_2) {g_1^*}^{-1}{g_2^*}^{-1} u_1    e^{-\omega}= \\
=\int_{\mathbb{C}^n_{g_1g_2}}  ({g_1^*}^{-1}u_2)   u_1    e^{-\omega}=(-1)^{\deg u_1\deg u_2}\int_{\mathbb{C}^n_{g_1g_2}} 
u_1{g_1^*}^{-1} u_2    e^{-\omega},
\end{multline}
where we used the fact that ${g_1^*}^{-1}{g_2^*}^{-1}={(g_1g_2)^*}^{-1}=1$ on $\mathbb{C}^n_{g_1g_2}$.

To compare the exponential functions in~\eqref{eq-long1} and~\eqref{eq-long2}, we set $f_j=g_1^{-1}e_j$ and 
claim that
\begin{equation}
\label{eq-long3}
 \frac{\im(z_2,g_2z_1)-\im(z_1,g_1z_2)}2+\frac{1}{4}\sum_{j=1}^m
 \left(
   |(z_1+g_1z_2,e_j )|^2-|(z_2+g_2z_1,f_j )|^2
 \right)
 \ctg\varphi_j/2=0 .
\end{equation}
To prove~\eqref{eq-long3}, we decompose $z_1+g_1z_2$ and $z_1$ as 
$$
z_1=\sum_j d_j e_j+z_{10}, \quad d_j=(z_1,e_j),\quad z_{10}\in \mathbb{C}^n_{g_1g_2},
$$
$$
z_1+g_1z_2=\sum_j c_je_j,\quad c_j=(z_1+g_1z_2,e_j).
$$
Note that $z_1+g_1z_2$ has no component in $\mathbb{C}^n_{g_1g_2}$ since by our assumption the affine mapping $w\mapsto g_1g_2w+z_1+g_1z_2$ has nontrivial fixed point set. Let us now compute the left hand side in~\eqref{eq-long3}.
We have
\begin{multline}
|(z_2+g_2z_1,f_j )|^2=|(z_2+g_2z_1,g_1^{-1}e_j )|^2=|(g_1z_2+g_1g_2z_1,e_j)|^2=|(g_1z_2+ z_1+(g_1g_2-1)z_1,e_j)|^2\\
=|c_j+(e^{i\varphi_j}-1)d_j|^2=|c_j|^2+|d_j|^22(1-\cos\varphi_j)+2\re({c_j}\overline{d_j}(e^{-i\varphi_j}-1)).
\end{multline}
Hence,
\begin{multline}
\im(z_2,g_2z_1)-\im(z_1,g_1z_2)=\im (g_1z_2,g_1g_2z_1)-\im(z_1,g_1z_2)\\
=\im\left(\sum_j(c_j-d_j)e_j-z_{10},z_{10}+\sum_je^{i\varphi_j}d_je_j\right)-
\im(\sum_j d_je_j+z_{10}, \sum_j(c_j-d_j)e_j-z_{10})\\
=\im \sum_j(c_j-d_j)\overline{d_j}e^{-i\varphi_j}-\im\sum_j d_j(\overline{c_j}-\overline{d_j})
=  \sum_j \left(
   \im (c_j \overline{d_j} (e^{-i\varphi_j}+1)) +|d_j|^2\sin\varphi_j 
  \right),
\end{multline}
Thus, the left hand side in~\eqref{eq-long3} is equal to
\begin{multline}
\frac12  \sum_j \left(
   \im (c_j \overline{d_j} (e^{-i\varphi_j}+1)) +|d_j|^2\sin\varphi_j   \right)\\+
  \frac 1 4 \sum_j \left(
  |c_j|^2-|c_j|^2-|d_j|^22(1-\cos\varphi_j)-2\re(c_j\overline{d_j}(e^{-i\varphi_j}-1))\right)\ctg(\varphi_j /2)\\ 
  =\frac 12 \sum_j |d_j|^2(\sin\varphi_j-(1-\cos\varphi_j)\ctg(\varphi_j /2))+
  \frac{1}{2}\sum_j\left[\im (c_j \overline{d_j} (e^{-i\varphi_j}+1))-\re(c_j\overline{d_j}(e^{-i\varphi_j}+1)(-i))\right]\\
  =0+ \frac{1}{2}\sum_j\left[\im (c_j \overline{d_j} (e^{-i\varphi_j}+1))-\im(c_j\overline{d_j}(e^{-i\varphi_j}+1)) \right]=0.
\end{multline}
Here we used the identities 
$$
\sin\varphi_j-(1-\cos\varphi_j)\ctg(\varphi_j /2)=0, \qquad (e^{-i\varphi_j}-1) \ctg(\varphi_j /2)=-i (e^{-i\varphi_j}+1).
$$
Equations \eqref{eq-int3} and \eqref{eq-long3} show that the left hand sides in \eqref{eq-long1} and \eqref{eq-long2}
are related as follows:
$$
\tau_\gamma(a_1a_2)=(-1)^{\deg a_1\deg a_2}\tau_\gamma(a_2a_1).
$$
This equality is precisely the graded trace property. 
The proof of Lemma~\ref{lem12} is now complete.
\end{proof}

\subsection*{Cyclic cocycles.}
Lemmas~\ref{lem11} and~\ref{lem12} imply that $(\Omega^*,\tau_{z_0,g_0})$ is a cycle in the sense of Connes~\cite{Con1}. In a standard way, we define the character of this cycle as the following cyclic cocycle:
\begin{equation}
\label{eq-char1}
 \Phi_{k;z_0,g_0}(a_0,a_1,...,a_k)=\tau_{z_0,g_0} (a_0da_1...da_k),\quad a_j\in \A.
\end{equation}

Theorem~\ref{th-main3} together with Lemmas~\ref{lem11} and~\ref{lem12} implies the following corollary.
\begin{corollary}\label{cor1}
Each component  $\Psi_{2k}$ of the Connes--Moscovici periodic cyclic cocycle~\eqref{fmain1} is   a  cyclic cocycle  and has the following decomposition
\begin{equation}
\Psi_{2k}=\frac{i^{-k}}{(2k)!} \sum_{\langle (z,g)\rangle}\Phi_{2k;z,g}
\end{equation}
into the sum of localized cyclic cocycles \eqref{eq-char1}, where the summation is over all conjugacy classes in $\mathbb{C}^n\rtimes U(n)$ with nontrivial fixed point set. 
\end{corollary}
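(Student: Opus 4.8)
The plan is to combine the explicit formula of Theorem~\ref{th-main3} with the fact, recorded just above, that each $\Phi_{2k;z_0,g_0}$ is a cyclic cocycle on $\A$ (being the character of the cycle $(\Omega^*,\tau_{z_0,g_0})$). It therefore suffices to evaluate the characters $\Phi_{2k;z,g}$ on a tuple of generators $a_j=T_{z_j}\mathbf{R}_{g_j}$ and to verify the displayed identity on such tuples; it then holds in general by multilinearity.

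For the evaluation I would proceed exactly as in Step~2 of the proof of Theorem~\ref{th-main3}. By \eqref{eq-diff2} one has $da_j=\sigma(z_j)T_{z_j}\mathbf{R}_{g_j}$, and multiplying out $a_0\,da_1\cdots da_{2k}$ in $\Omega^*$ one pushes each $1$-form to the left through the metaplectic factors by means of the equivariance identity \eqref{eq-equiv}, namely ${g^*}^{-1}\sigma(z)=\sigma(gz)$, while the Heisenberg product formula \eqref{eq-comm1} accumulates the scalar phase. This brings the product into the normal form
$$a_0\,da_1\cdots da_{2k}=e^{i\varepsilon}\,\sigma(w_1)\wedge\sigma(w_2)\wedge\cdots\wedge\sigma(w_{2k})\,T_z\mathbf{R}_g,$$
with $g=g_0g_1\cdots g_{2k}$, $w_j=(g_0\cdots g_{j-1})z_j$, $z=w_0+w_1+\cdots+w_{2k}$ and $e^{i\varepsilon}\,\mathrm{Id}=T_{w_0}T_{w_1}\cdots T_{w_{2k}}T_z^{-1}$ — precisely the combinatorial data appearing in \eqref{fmain1} (compare \eqref{eq-la1}, where $c(w_j)$, ${g^{-1}}^*$ and the heat factors $e^{-tH}$, $e^{-tF}$ played the role of $\sigma(w_j)$ here). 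Applying the functional $\tau_{z,g}$ and using its definition — in particular the prefactor $\prod_{j=1}^m e^{\frac i4|(z,e_j)|^2\ctg(\varphi_j/2)}$ and the Berezin integral $\int_{\mathbb{C}^n_g}(\cdot)\wedge e^{-\omega}$ — reproduces the second line of \eqref{fmain1} up to the scalar $i^{-k}/(2k)!$. The two vanishing regimes match as well: $\tau_{z,g}$, and hence $\Phi_{2k;z,g}$, is defined only when $w\mapsto gw+z$ has a fixed point, which is the first alternative in \eqref{fmain1}; and the Berezin integral over $\mathbb{C}^n_g$ of a wedge of $2k$ one-forms with $e^{-\omega}$ vanishes for degree reasons precisely when $k>\dim_{\mathbb{C}}\mathbb{C}^n_g$, the other one. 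One thus obtains $\Psi_{2k}(a_0,\dots,a_{2k})=\frac{i^{-k}}{(2k)!}\,\Phi_{2k;z,g}(a_0,\dots,a_{2k})$ for the unique conjugacy class containing the total data $(z,g)$, while $\Phi_{2k;z',g'}(a_0,\dots,a_{2k})=0$ for every other class.

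Since for a fixed tuple of generators the total data $(z,g)$ is uniquely determined, the sum $\sum_{\langle(z,g)\rangle}\Phi_{2k;z,g}$ has at most one nonzero summand on any such tuple, so it is a well-defined cyclic cochain; being a locally finite sum of cyclic cocycles it is itself a cyclic cocycle. The identity above then reads $\Psi_{2k}=\frac{i^{-k}}{(2k)!}\sum_{\langle(z,g)\rangle}\Phi_{2k;z,g}$ on generators, and hence everywhere by linearity, which is the assertion of the corollary. The only step that requires genuine work is the normal-form computation together with the bookkeeping of the phase $\varepsilon$ and of the eigenvalue prefactors; I would organize it so as to reuse the computations already carried out in \eqref{eq-la1} and in the proof of Lemma~\ref{lem12} rather than repeat them.
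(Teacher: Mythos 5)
Your proposal is correct and follows essentially the same route as the paper: the paper states Corollary~\ref{cor1} as an immediate consequence of Theorem~\ref{th-main3} together with Lemmas~\ref{lem11} and~\ref{lem12}, and the generator-by-generator normal-form computation you spell out (pushing the one-forms $\sigma(z_j)$ left via ${g^*}^{-1}\sigma(z)=\sigma(gz)$, accumulating the phase via the Heisenberg product formula, then applying $\tau_{z,g}$ and matching with \eqref{fmain1}, including the two vanishing regimes) is exactly the verification the paper leaves implicit, reusing \eqref{eq-la1} and the computations of Lemma~\ref{lem12}.
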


\section{Applications to Noncommutative Tori and Orbifolds}\label{sec5}


Here we specialize our spectral triple $(\A,\H,D)$ to subalgebras in $\A$ and obtain as corollaries of Theorem~\ref{th-main3} local index formulas on noncommutative tori of arbitrary dimension and noncommutative orbifolds.

\subsection*{The local index formula for noncommutative tori.}

Let $v_j\in\mathbb{C}^n$, $1\le j\le N$ be a collection of  vectors linearly independent over $\mathbb{Q}$.
They generate the lattice 
$$
 \left\{ \sum_j \ell_jv_j\;\Bigl|\; \ell_j\in\mathbb{Z}\Bigr.\right\}\subset\mathbb{C}^n
$$ 
isomorphic to $\mathbb{Z}^N$. We define the algebra $\A_v\subset \A$ of `functions on an $N$-dimensional  noncommutative torus' by
\begin{equation}
\label{eq-tori1}
 \A_v=\left\{\sum_\ell c_\ell T^{\ell_1}_{v_1}\cdots T^{\ell_N}_{v_N}\;|\; c_\ell\in\mathbb{C},\ell=(\ell_1,\ell_2,...,\ell_N),\ell_j\in\mathbb{Z}\right\}.
\end{equation} 
This is the algebra generated by the $N$ unitaries  
$$
T_{v_j}u(x)=e^{i(k_jx-a_jk_j/2)}u(x-a_j),\quad
\text{where }a_j=\re v_j,\; k_j=-\im v_j,
$$
acting on $\H=L^2(\mathbb{R}^n,\Lambda(\mathbb{C}^n))$ with the   commutation relations (cf.~\cite{Elliott1})
$$
 T_{v_k}T_{v_l}=e^{-i\im(v_k,v_l)}T_{v_l}T_{v_k}.
$$
Then we consider the spectral triple $(\A_v,\H,D)$, where $D$ was defined in~\eqref{euler2s}.
Corollary~\ref{cor1} implies that the Connes--Moscovici periodic cyclic cocycle  of this spectral triple decomposes into  cyclic cocycles  
\begin{equation}
\label{eq-indtor1}
 \Psi_{2k}(a_0,...,a_{2k})=\frac{i^{-k}}{(2k)!}\int_{\mathbb{C}^n} (a_0 da_1... da_{2k})(0)\wedge e^{-\omega}, \quad k\le n,  \quad a_j\in \A_v.
\end{equation} 
Here  $a_j$ are treated as elements in the differential graded algebra $\Omega^*$ (see~\eqref{eq-diff1} and~\eqref{eq-diff2}),  $(a_0 da_1... da_{2k})(0)\in \Lambda(\mathbb{C}^n)$ stands for the component corresponding to 
$\ell=0$ in \eqref{eq-tori1}, which is the only one with nontrivial fixed point set,
$\omega=dx\wedge dp$ is the symplectic form, while $\int_{{\mathbb{C}^n}}$ is the Berezin integral.

For $n=1$ Eq.~\eqref{eq-indtor1} coincides with the Connes cyclic cocycles in~\cite{Con1}, while for $n\ge 1$ this result is a refinement of the Riemann--Roch theorem on noncommutative tori of arbitrary dimension (see~\cite{MaRo20}).

\subsection*{The local index formula for noncommutative $\mathbb{Z}_4$-orbifolds.}

Choose complex numbers $z_1=k,z_2=ik$, $k>0$ and $g=i\in U(1)$. We define the square lattice $L=\{n_1z_1+n_2z_2\in\mathbb{C}\;|\; n_1,n_2\in\mathbb{Z}\}$
on which the group $\mathbb{Z}_4=\{i^\beta\;|\;\beta\in\mathbb{Z}\}$ acts by rotations.

To these elements, we associate the unitary operators $U=T_{z_1},V=T_{z_2}$, $R=R_{g}$:
$$
Uf(x)=f(x-k),\quad Vf(x)=e^{-ikx}f(x),\quad Ru(x)=(2\pi)^{-1/2}\int f(y)e^{-ixy}dy.
$$ 
We obtain the  commutation relations:
$$
VU=e^{ i\theta}UV,\quad RUR^{-1}=V,\quad RVR^{-1}=U^{-1}, \quad\text{where }\theta=- k^2.
$$ 
Hence, the algebra generated by $U$ and $V$ is just the noncommutative torus $\A_{\theta}$, while the algebra generated by $U,V,R$ is the  crossed product $\A_{\theta}\rtimes \mathbb{Z}_4$ with respect to the action of the generator 
of $\mathbb{Z}_4$ on  the generators $U,V\in \A_{\theta}$ as: 
$$
 U\longmapsto RUR^{-1}=V,\quad
 V\longmapsto RVR^{-1}=U^{-1}.
$$
This crossed product is known as a noncommutative orbifold for the group $\mathbb{Z}_4$ and was studied earlier in operator algebras and noncommutative geometry (see~\cite{FaWa1,Walt4,Walt2}).

It follows from the commutation relations  that elements   $a\in \A_{\theta}\rtimes \mathbb{Z}_4$ can be uniquely written as
$$
a=\sum_{(z,\alpha)\in L\times\mathbb{Z}_4}
a(z,\alpha)T_zR^\alpha .
$$ 

Consider the spectral triple $(\A_{\theta}\rtimes \mathbb{Z}_4,\H,D)$, which is the restriction of the spectral triple
$(\A,\H,D)$ in Section~\ref{sec3} to the subalgebra $\A_{\theta}\rtimes \mathbb{Z}_4\subset \A$.  Then Corollary~\ref{cor1} shows that the Connes--Moscovici periodic cyclic cocycle decomposes as a sum of cyclic cocycles   
$$
 \Phi_{2l;z,\alpha}\in HC^{2l}(\A_{\theta}\rtimes \mathbb{Z}_4),\quad l=0,1 
$$
for each $(z,\alpha)\in L\times\mathbb{Z}_4$. Let us describe these cocycles explicitly.

First, if $\alpha=0$, then the cocycles are nontrivial  only if $z=0$. In this case the fixed point set of the affine mapping $w\mapsto i^\alpha w+z=w$    is equal to $\mathbb{C}$ and  we have
$$
\Phi_{0;0,0}(a)=a(0,0),\quad \Phi_{2;0,0}(a_0,a_1,a_2)=\int_{\mathbb{C}} (a_0 da_1 d a_2)(0,0),
$$
where the exterior differential is that described above, while $\int_{\mathbb{C}}$ stands for the Berezin integral.

Second, if $\alpha\ne 0$, then the fixed point set of the affine mapping $w\mapsto i^\alpha w+ z$ is always zero dimensional; hence
the cocycles $\Phi_{2;z,\alpha}$ are trivial by  \eqref{fmain1}. Let us describe the trace $\Phi_{0;z,\alpha}$.    A direct computation shows that the conjugacy class $\langle(z,i^\alpha)\rangle\subset \mathbb{Z}^2\rtimes \mathbb{Z}_4$ is equal to 
\begin{equation}
\label{eq-conj1}
\langle(z,i^\alpha)\rangle=
\bigl(i^{\mathbb{Z}}z+L(1-i^{\alpha})\bigr)\times\{i^\alpha\}.  
\end{equation}
Hence, the cyclic cocycle is equal to 
$$
 \Phi_{0;z,\alpha}(f)=\sum_{z'\in i^{\mathbb{Z}}z+L(1-i^{\alpha})}\exp\left(\frac i 4 |z'|^2\ctg\frac{\pi\alpha}4\right)f(z',\alpha).
$$
A computation shows that there are actually eight different conjugacy classes in  \eqref{eq-conj1}, see the following table:

\begin{center}
\begin{tabular}{|c|c|l|}
 \hline 
 $z$ & $i^\alpha$ & conjugacy class  $\langle (z,i^\alpha)\rangle$  \\
 \hline
  $0$ & $1$ & $\{0\}\times \{1\}$ \\
  $0$ & $i$ & $k[(1-i)\mathbb{Z}+(1+i)\mathbb{Z}]\times \{i\}$ \\
  $k$ & $i$ & $k[1+(1-i)\mathbb{Z}+(1+i)\mathbb{Z}]\times \{i\}$ \\
  $0$ & $i^2$ & $k[2\mathbb{Z}+2i\mathbb{Z}]\times \{i^2\}$ \\
  $k$ & $i^2$ & $k[1+(1-i)\mathbb{Z}+(1+i)\mathbb{Z}]\times \{i^2\}$ \\
  $k(1+i)$ & $i^2$ & $k[1+i+2\mathbb{Z}+2i\mathbb{Z}]\times \{i^2\}$ \\
  $0$ & $i^3$ & $k[(1-i)\mathbb{Z}+(1+i)\mathbb{Z}]\times \{i^3\}$ \\
  $k$ & $i^3$ & $k[1+(1-i)\mathbb{Z}+(1+i)\mathbb{Z}]\times \{i^3\}$ \\  
  \hline
 \end{tabular}
\end{center}
 
Thus, the eight different traces $\Phi_{0,z,\alpha}$ coming from the decomposition of the Connes--Moscovici local index formula form a basis of the  eight-dimensional space of traces  on $\A_{\theta}\rtimes \mathbb{Z}_4$ (see~\cite{Walt4}).

\subsection*{The local index formula for noncommutative $\mathbb{Z}_6$-orbifolds.}

Choose complex numbers $z_1=k,z_2=\varepsilon k$, $g=\varepsilon\in U(1)$, where $k>0$ and $\varepsilon=e^{ \pi i/3}$. We consider the triangular lattice $L=\{n_1z_1+n_2z_2\in\mathbb{C}\;|\; n_1,n_2\in\mathbb{Z}\}$
with the group $\mathbb{Z}_6=\{\varepsilon^\beta\;|\;\beta\in\mathbb{Z}\}$ acting on $L$ by rotations.

Consider the unitary operators $U=T_{z_1},V=T_{z_2}$, $R=R_{g}$:
$$
Uf(x)=f(x-k),\quad 
Vf(x)=e^{i(-xk\sqrt{3}/2+k^2\sqrt{3}/8)}f(x-k/2), 
$$
$$
Ru(x)=\sqrt{\frac{1-\frac{i}{\sqrt{3}}}{2\pi}}
\int  \exp\left(  i\left( (x^2+y^2)\frac{1}{2\sqrt 3}-\frac{2xy}{\sqrt{3}} \right)        \right) u(y )dy .
$$  
We have the commutation relations:
$$
VU=e^{i \theta}UV,\quad RUR^{-1}=V,\quad RVR^{-1}=e^{-i \theta/2}U^{-1}V,\quad\text{where }\theta=-\frac{\sqrt 3}{2}k^2.
$$ 
Hence, the algebra generated by $U$ and $V$ is just the noncommutative torus $\A_{\theta}$, while the algebra generated by $U,V,R$ is the  crossed product $\A_{\theta}\rtimes \mathbb{Z}_6$ with respect to the action of the generator 
of $\mathbb{Z}_6$ on  the generators $U,V\in \A_{\theta}$ as: 
$$
 U\longmapsto RUR^{-1}=V,\quad
 V\longmapsto RVR^{-1}=e^{-i \theta/2}U^{-1}V.
$$
This crossed product is known as a noncommutative orbifold for the group $\mathbb{Z}_6$  and was studied earlier in operator algebras and noncommutative geometry (see~\cite{BuWa1,Walt3,Walt2,Walt1}).

It follows from the commutation relations  that elements   $f\in \A_{\theta}\rtimes \mathbb{Z}_6$ can be uniquely written as
$$
f=\sum_{(z,\alpha)\in L\times\mathbb{Z}_6} f(z,\alpha)T_zR^\alpha.
$$

Consider the spectral triple $(\A_{\theta}\rtimes \mathbb{Z}_6,\H,D)$, which is the restriction of the spectral triple
$(\A,\H,D)$ in Section~\ref{sec3} to the subalgebra $\A_{\theta}\rtimes \mathbb{Z}_6\subset \A$.  Then Corollary~\ref{cor1} shows that the Connes--Moscovici periodic cyclic cocycle decomposes as a sum of cyclic cocycles   
$$
 \Phi_{2l;z,\alpha}\in HC^{2l}(\A_{\theta}\rtimes \mathbb{Z}_6),\quad l=0,1, 
$$
for each $(z,\alpha)\in L\times\mathbb{Z}_6$. Let us describe these cocycles explicitly.

First, if $\alpha=0$, then the cocycles are nontrivial  only if $z=0$. In this case the fixed point set of the rotation   $w\mapsto \varepsilon^\alpha w $ is equal to $\mathbb{C}$ and  we have
$$
\Phi_{0;0,0}(a)=a(0,0),\quad \Phi_{2;0,0}(a_0,a_1,a_2)=\int_{\mathbb{C}} (a_0 da_1 d a_2)(0,0),
$$
where the exterior differential is that described above, and $\int_{\mathbb{C}}$ stands for the Berezin integral.

Second, if $\alpha\ne 0$, then the fixed point set of the affine mapping $w\mapsto \varepsilon^\alpha w+ z$ is of dimension zero and the 
cocycle $\Phi_{2;z,\alpha}$ is trivial by \eqref{fmain1}. 
Let us describe the trace $\Phi_{0;z,\alpha}$.    A direct computation shows that the conjugacy class $\langle(z,\varepsilon^\alpha)\rangle\subset L\rtimes \mathbb{Z}_6$ is equal to 
\begin{equation}
\label{eq-conj2}
\langle(z,\varepsilon^\alpha)\rangle=
[\varepsilon^{\mathbb{Z}}z+L(1-\varepsilon^{\alpha})]\times\{\varepsilon^\alpha\}.
\end{equation}
Hence, the trace is equal to 
$$
 \Phi_{0;z,\alpha}(f)=\sum_{z'\in \varepsilon^{\mathbb{Z}}z+L(1-\varepsilon^{\alpha})}\exp\left(\frac i 4 |z'|^2\ctg\frac{\pi\alpha}6\right)f(z',\alpha).
$$ 
A computation shows that there are actually nine different conjugacy classes in  \eqref{eq-conj2}, see the following table:

\begin{center}
\begin{tabular}{|c|c|l|}
 \hline 
 $z$ & $\varepsilon^\alpha$ &  conjugacy class $\langle (z,\varepsilon^\alpha)\rangle$  \\
 \hline
  $0$ & $1$ & $\{0\}\times \{1\}$ \\
  $0$ & $\varepsilon$ & $
  L\times \{\varepsilon\}$ \\
  $0$ & $\varepsilon^2$ & $k[(\varepsilon^2-1)\mathbb{Z}+(\varepsilon+1)\mathbb{Z}]\times \{\varepsilon^2\}$ \\
  $k$ & $\varepsilon^2$ & $(L\setminus k[(\varepsilon^2-1)\mathbb{Z}+(\varepsilon+1)\mathbb{Z}])\times \{\varepsilon^2\}$ \\
  $0$ & $\varepsilon^3$ & $2L\times \{\varepsilon^3\}$ \\
  $k$ & $\varepsilon^3$ & $(L\setminus 2L)\times \{\varepsilon^3\}$ \\
  $0$ & $\varepsilon^4$ & $k[ (\varepsilon+1)\mathbb{Z}+(\varepsilon^2+\varepsilon) \mathbb{Z}]\times \{\varepsilon^4\}$ \\
  $k$ & $\varepsilon^4$ & $(L\setminus k[ (\varepsilon+1)\mathbb{Z}+(\varepsilon^2+\varepsilon) \mathbb{Z}])\times \{\varepsilon^4\}$ \\
   $0$ & $\varepsilon^5$ & $
   L\times \{\varepsilon^5\}$ \\
  \hline
 \end{tabular}
\end{center}

Thus, the nine different traces $\Phi_{0,z,\alpha}$ coming from the decomposition of the Connes--Moscovici local index formula form a basis of the  nine-dimensional space of traces  on $\A_{\theta}\rtimes \mathbb{Z}_6$ (see~\cite{BuWa1}).

\section{Equivariant Zeta Functions for the Affine Metaplectic Group}\label{sec:zeta} 

Let $A\in\Psi (\mathbb{R}^n)$ be a Shubin type pseudifferential  operator of order $\ord A$, $g\in U(n), w\in \mathbb{C}^n$. For $z\in \mathbb C$ with $\re z$ sufficiently large consider the zeta function
\begin{equation}\label{eq-zeta2}
 \zeta_{A,g,w}(z)=\tr(R_gT_wA H^{-z}). 
\end{equation}

\begin{theorem}\label{zeta1}
The zeta function \eqref{eq-zeta2} has the following properties:
\begin{enumerate}
\item It is well defined and holomorphic in the half-plane $\ord A-2\re z<-2n$;
\item It has a meromorphic continuation to $\mathbb{C}$ with possibly simple poles at the points
$$
z=\dim \mathbb{C}^n_g +(\ord A-j)/2, \qquad j\in \mathbb{Z}_+,
$$  
where $ \mathbb{C}^n_g  $ is the fixed point set of $g:\mathbb{C}^n\to\mathbb{C}^n$. Moreover, if the fixed point set of the affine mapping $\mathbb{C}^n\to\mathbb{C}^n, v\mapsto gv+w$ is empty, then the zeta function has no poles.
\end{enumerate}
\end{theorem}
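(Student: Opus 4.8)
The plan is to pass from $H^{-z}$ to the heat operator $e^{-tH}$ by a Mellin transform and to reduce the whole statement to a small-time expansion of $\Theta(t):=\tr(R_gT_wAe^{-tH})$, in the spirit of Proposition~\ref{prop2}. For $\re z$ large one has $\zeta_{A,g,w}(z)=\Gamma(z)^{-1}\int_0^\infty t^{z-1}\Theta(t)\,dt$; here $\Theta(t)$ is a well-defined trace for $t>0$ since $e^{-tH}$ is smoothing and trace class while $R_gT_wA$ is a bounded operator composed with a Shubin operator, and $\|e^{-tH}\|\le e^{-tn/2}$ gives $\Theta(t)=O(t^{+\infty})$ as $t\to\infty$, so that $\Gamma(z)^{-1}\int_1^\infty t^{z-1}\Theta(t)\,dt$ is entire. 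Assertion (1) is then immediate: $H^{-z}$ is a Shubin operator of order $-2\re z$, so $R_gT_wAH^{-z}$ is of trace class and holomorphic in $z$ on $\ord A-2\re z<-2n$. All poles of $\zeta_{A,g,w}$ therefore come from $\int_0^1t^{z-1}\Theta(t)\,dt$ and are dictated by the $t\to0^+$ asymptotics of $\Theta$, through $\int_0^1t^{z-1-\mu}\,dt=(z-\mu)^{-1}$.

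To extract these asymptotics I would first reduce to a diagonal $g$. Writing $g=hg_0h^{-1}$ with $g_0=\diag(e^{i\varphi_1},\dots,e^{i\varphi_m},1,\dots,1)$ and using that $R_h$ commutes with $H$, that $R_h^{-1}T_wR_h=T_{h^{-1}w}$, and the Egorov theorem of Section~\ref{sec-shubin} (so $A'=R_h^{-1}AR_h$ is again Shubin of order $\ord A$), one gets $\Theta(t)=\tr(R_{g_0}T_{h^{-1}w}A'e^{-tH})$, and the affine map $v\mapsto gv+w$ has a fixed point iff $v\mapsto g_0v+h^{-1}w$ does. As in the proof of Proposition~\ref{prop1}, the identity $e^{-tH_j}R_{\varphi_j}=e^{i\varphi_j/2}e^{-(t+i\varphi_j)H_j}$ absorbs $R_{g_0}$ into the heat operator, so after a cyclic permutation $\Theta(t)=c\,\tr\bigl(T_{h^{-1}w}A'\bigotimes_{j=1}^n e^{-t_jH_j}\bigr)$ with a constant $c$ and $t_j=t+i\varphi_j$. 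This is the trace of a Weyl pseudodifferential operator whose symbol is the Moyal product of the plane-wave symbol of $T_{h^{-1}w}$, the classical symbol $a'$, and the product of one-dimensional Mehler symbols $(\cosh\tfrac{t_j}2)^{-1}\exp(-\tanh\tfrac{t_j}2(x_j^2+p_j^2))$, and the trace is the integral of that symbol over $T^*\mathbb R^n$. Split the variables into the $2m$ ``rotated'' ones ($j\le m$: here $\tanh\tfrac{t_j}2\to i\tan\tfrac{\varphi_j}2$ as $t\to0$ and everything is analytic in $t$) and the $2(n-m)$ ``fixed'' ones ($j>m$: here $t_j=t$ and the Gaussian concentrates). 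If $h^{-1}w$ has a nonzero component in the fixed directions — equivalently, the affine map has no fixed point — then the plane wave produces a genuine oscillation $e^{i\langle k'',x''\rangle}$ against a Gaussian of width $\sqrt t$, the resulting integral is $O(e^{-c/t})$, and $\Theta(t)=O(t^{+\infty})$; this is the last sentence of (2). Otherwise, rescaling the fixed variables by $(\tanh\tfrac t2)^{-1/2}$, expanding $a'$ into its homogeneous components and the Mehler/Moyal factors into Taylor series in $t$, and applying Watson's lemma to the Gaussian integrals gives $\Theta(t)\sim\sum_{j\ge0}c_j\,t^{-(n-m)-(\ord A-j)/2}$ — the rescaled measure contributing $(\sinh\tfrac t2)^{-(n-m)}\sim t^{-(n-m)}$, with no logarithmic terms since every ingredient is analytic in $t$ or a pure power. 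Combined with the Mellin formula this yields precisely the claimed (at most) simple poles at $z=\dim\mathbb C^n_g+(\ord A-j)/2$, $j\in\mathbb Z_+$.

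The hard part is the rigorous control of the $t\to0^+$ expansion of this symbol integral over \emph{non-compact} phase space: $a'$ is only polynomially bounded and the Gaussian weight in the fixed directions spreads out as $t\to0$, so the remainders of the expansion must be estimated uniformly on unbounded regions. This is the point at which the weakly parametric calculus of Grubb and Seeley (cf.\ Remark~\ref{decay}) is the natural device; alternatively one estimates by hand, combining the Shubin bounds $|D^\alpha_pD^\beta_xa'|\le c_{\alpha\beta}(1+|x|+|p|)^{\ord A-|\alpha|-|\beta|}$ with the Gaussian decay in the rotated coordinates to bound each term of the expansion and its remainder. Two minor remarks round things off: the zeros of $\Gamma(z)^{-1}$ at the non-positive integers may cancel but never create poles, whence the wording ``possibly simple''; and running the same argument coefficientwise on the form bundle, where $D^2=H+F$ with $F$ bounded and commuting with $H$, supplies the simple dimension spectrum required in Theorem~\ref{th34}.
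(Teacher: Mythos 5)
Your route is genuinely different from the paper's. You reduce everything to the small-time expansion of the heat trace $\Theta(t)=\tr(R_gT_wAe^{-tH})$ and recover the continuation through the Mellin transform, in the spirit of Proposition~\ref{prop2} and Remark~\ref{decay}. The paper instead analyzes $\zeta_{A,g,w}(z)$ in \eqref{eq-zeta2} directly as an oscillatory integral: after the same reduction to diagonal $g$ by Egorov, it writes the Schwartz kernel of $R_gT_wAH^{-z}$ using the Mehler kernels of the fractional Fourier transforms, performs the Gaussian $x''$-integrations explicitly, brings the phase to the normal form $\sum_j\lambda_jv_j^2+{\rm const}$, and then passes to spherical coordinates, applying stationary phase on the sphere with large parameter $r^2$ and integrating the resulting expansion in $r$; only the eigenvalue $\mu=0$, of multiplicity $2\dim\mathbb{C}^n_g$, produces poles, and the coefficients are manifestly entire in $z$. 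The advantage of that scheme is that the only asymptotic expansion needed is a stationary-phase expansion over \emph{compact} critical manifolds, for which the classical Shubin estimates on $q(x,p;z)$ suffice verbatim; your scheme, if completed, yields more (full heat trace asymptotics, hence also rapid decay on vertical lines), but it must pay for that.

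That payment is exactly where your argument stops. The uniform control of the $t\to0^+$ expansion of the phase-space integral --- a polynomially growing symbol $a'$, purely oscillatory Fresnel factors in the rotated directions at $t=0$, a Gaussian spreading like $t^{-1/2}$ in the fixed directions, and a Moyal product whose expansion must be estimated with remainders on unbounded regions --- is the entire analytic content of statement (2): it is what produces the powers $t^{-(n-m)-(\ord A-j)/2}$, excludes logarithmic terms, and identifies the residues. You explicitly flag this as ``the hard part'' and point to Grubb--Seeley or hand estimates, but you do not carry it out, so as written the proposal is a plausible programme rather than a proof of (2). (Part (1) and the no-fixed-point case are fine: the $O(e^{-c/t})$ bound is correct, though the Gaussian in the fixed directions has width $\sim t^{-1/2}$, not $\sqrt t$; it is its Fourier transform that has width $\sqrt t$.)

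A smaller concrete point: your claim that in the rotated directions ``everything is analytic in $t$'' because $\tanh(t_j/2)\to i\tg(\varphi_j/2)$ fails for the eigenvalue $-1$, i.e.\ $\varphi_j=\pi$, where $\tanh((t+i\pi)/2)=\coth(t/2)\to\infty$ and the Mehler/Weyl factor degenerates to a Gaussian concentrating at the origin (the Weyl symbol of the parity operator is a delta). That block still contributes a factor with a regular expansion in $t$, but it needs a separate argument; note that the paper also isolates the special angles in its kernel computation ($\varphi_j=\pm\pi/2$ give delta factors, $\varphi_j=\pi$ is treated as the point transformation $x\mapsto-x$), so any complete write-up along your lines should do the analogous case distinction.
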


\begin{proof}
1. The operator $AH^{-\re z}$ is a Shubin type pseudodifferential operator  of order $\le \ord A-2\re z< -2n$ by the assumption. Hence, it is of trace class. Thus, the zeta function is well defined and holomorphic in $z$, since $R_g,T_w,H^{-i\im z}$ are bounded operators.

2. Let us now show that the zeta function has a meromorphic continuation to $\mathbb{C}$. Without loss of generality we can assume that $g$ is a diagonal matrix. Indeed, if this is not the case, then we have $g=ug_0u^{-1}$, where $u$  is  unitary, while $g_0$ is diagonal and unitary. Hence:
\begin{multline}
 \zeta_{A,g,w}(z)=\tr(R_gT_wA H^{-z})=\tr(R_uR_{g_0}R_u^{-1}T_wA H^{-z})\\
 =\tr( R_{g_0}R_u^{-1}T_w(R_u R_u^{-1})A (R_u R_u^{-1})H^{-z}R_u)\\
 =\tr( R_{g_0}(R_u^{-1}T_wR_u)(R_u^{-1} A  R_u)  (R_u^{-1}H^{-z}R_u))\\
 =
  \tr( R_{g_0}T_{w'}A'   H^{-z} )= \zeta_{A',g',w'}(z)
\end{multline}
Here $A'=R_u^{-1} A  R_u$ is a Shubin type operator by Egorov's theorem, $R_u^{-1}T_wR_u= T_{w'}$, where $w'=u^{-1}w$, and we used the fact that $H$ commutes with $R_u$.

Let now $w=(w_1,...,w_n)= a-ik$, where $a,k\in \mathbb{R}^n$, and consider diagonal element
\begin{equation}
\label{eq-diag2}
g={\rm diag}\Bigl(\underbrace{e^{i\varphi_1},...,e^{i\varphi_{m_1}}}_{m_1},\underbrace{i,...,i}_{m_2},\underbrace{-i,...,-i}_{m_3},\underbrace{-1,...,-1}_{m_4},\underbrace{1,...,1}_{m_5}\Bigr),
\end{equation}
where $\varphi_j\notin \pi\mathbb{Z}/2$ and   $m_5=\dim(\mathbb{C}^n)^g$.   
For later purposes we also let $\varphi_j=\pi/2$ for 
$j=m_1+1, \ldots, m_1+m_2$ and $\varphi_j = 3\pi/2$ for $j= m_1+m_2+1,\ldots, m_1+m_2+m_3$. 

The Schwartz kernel of $A H^{-z} $ is written as 
\begin{equation}
K_{AH^{-z}}(x,x')=\int e^{i(x-x')p}q(x',p;z)dp,
\end{equation}
where  $q(x',p;z)$ is a classical symbol of order $\ord A-2z$.
Then the  Schwartz kernel of $T_wA H^{-z} $ is equal to 
\begin{equation}
K_{T_wAH^{-z}}(x,x')=Const\int e^{i((x-a-x')p+kx)}q(x',p;z)dp.
\end{equation}
Since $g$ is a diagonal matrix, the operator $R_g$ is a product of fractional Fourier transforms in the variables $x_1,..,x_n$ and its Schwartz kernel is given by the Mehler formula and, hence, the Schwartz kernel of $R_gT_wA H^{-z} $ is equal to 
\begin{equation}
K_{R_gT_wAH^{-z}}(x,x')=Const\int  e^{i\phi_1}q(x',p;z)dp  dx'',\quad x''=(x''_1,...,x''_{m_1+m_2+m_3}),
\end{equation}
where 
\begin{multline*}
\phi_1=\sum_{j=1}^{m_1+m_2+m_3}\left(-\frac{x_jx_j''}{\sin\varphi_j}+\frac{\ctg\varphi_j}2(x_j^2+{x_j''}^2)
+(x_j''-a_j-x_j')p_j+k_jx_j''\right)\\
+\sum_{j=m_1+m_2+m_3+1}^{n-m_5}((-x_j -a_j-x_j')p_j-k_jx_j ) +\sum_{j=n-m_5+1}^n((x_j -a_j-x_j')p_j+k_jx_j ).
\end{multline*}
Hence, the zeta function is equal to
\begin{equation}\label{eq-fixa1}
\zeta_{A,g,w}(z)=\int K_{R_gT_wAH^{-z}}(x,x )dx= Const\int  e^{i\phi_2}q(x,p;z)dp  dx''dx,
\end{equation}
where
\begin{multline*}
\phi_2=\sum_{j=1}^{m_1+m_2+m_3}\left(-\frac{x_jx_j''}{\sin\varphi_j}+\frac{\ctg\varphi_j}2(x_j^2+{x_j''}^2)
+(x_j''-a_j-x_j)p_j+k_jx_j''\right)\\
-\sum_{j=m_1+m_2+m_3+1}^{n-m_5}((2x_j +a_j)p_j+k_jx_j )+\sum_{j=n-m_5+1}^n( -a_j p_j+k_jx_j ).
\end{multline*}
Note that if some $a_j\ne 0$ (or $k_j\ne 0$) for $j>n-m_5$,\footnote{
This condition is equivalent to the condition that the   affine mapping $z\mapsto gz+w$ has no fixed points.} then integrations by parts in \eqref{eq-fixa1}
with respect to $p_j$ (respectively $x_j$) show that $\zeta_{A,g,w}(z)$ can also be represented by an integral, where
the pseudodifferential symbol has very negative order. This proves that in this case the zeta function is in fact an entire function in $\mathbb{C}$.

Thus, below we suppose that $a_j=k_j=0$ for all $j>n-m_5.$
Let us now compute the Gaussian integral over $x''$ in \eqref{eq-fixa1}:
\begin{multline*}
\int  \exp \Biggl(i 
\left( \frac{\ctg\varphi_j}2 {x_j''}^2 
+ x_j''\left( p_j+k_j -\frac{x_j }{\sin\varphi_j}\right)\right)\Biggr)dx''_j\\
=
\left\{
\begin{array}{ll}
\displaystyle Const\exp \Bigl(-\frac i 2 
\tg\varphi_j\Bigl(p_j+k_j-\frac{x_j }{\sin\varphi_j}\Bigr)^2\Bigr), 
& \text{if }\varphi_j\notin \pi\mathbb{Z}/2,\\
\displaystyle Const \;\;\delta\left(p_j+k_j\mp x_j\right), & \text{if }\varphi_j=\pm\pi /2,\\
\end{array}
\right.
\end{multline*}
and obtain
\begin{equation}\label{eq-fixa2}
\zeta_{A,g,w}(z)=Const\int  e^{i\phi_3(x,p')}q(x,p',p'';z)dxdp'.  
\end{equation}
Here we decomposed $p$ as follows: $p=(p',p'')$, where $p'=(p_1,...,p_{m_1},p_{m_1+m_2+m_3+1},...,p_n)$ and $p''=(p_{m_1+1},...,p_{m_1+m_2+m_3})$. Note that the integration of the $\delta$-functions gives us $p_j=\pm x_j-k_j$ for all $j=m_1+1,...,m_1+m_2+m_3$.  The phase function in~\eqref{eq-fixa2} is equal to
\begin{multline}\label{phase3}
\phi_3(x,p')=
\\ \sum_{j=1}^{m_1}
     \Bigl(
        x_j^2\Bigl(\frac{\ctg\varphi_j}2-\frac{1}{ \sin 2\varphi_j }\Bigr)
       +x_jp_j\Bigl(\frac{1}{\cos\varphi_j}-1 \Bigr)
       -p_j^2\frac{\tg\varphi_j}2  
       +x_j\frac{k_j}{\cos\varphi_j}
       -p_j\left( a_j+k_j\tg\varphi_j\right)-\frac{\tg\varphi_j k_j^2}{2}
      \Bigr)  \\
-\sum_{j=m_1+1}^{m_1+m_2+m_3}(x_j+a_j)\left(\frac{x_j}{\sin\varphi_j}-k_j\right)       
-\sum_{j=m_1+m_2+m_3+1}^{n-m_5}( 2x_jp_j +a_j p_j+k_jx_j ).
\end{multline}
A change of variables $(x,p')=Bv+b$, where $B\in O(\nu)$, $\nu=2n-m_2-m_3$, is an orthogonal matrix and $v,b\in\mathbb{R}^{\nu}$, makes the phase function $\phi_3$ quadratic in $v$ plus a constant:
\begin{equation}\label{eq-phase6}
\phi_3(x,p')=\sum_{j=1}^{\nu} \lambda_j v_j^2+Const.
\end{equation}
Note that $B$ and $b$ depend only on $g$ and $w$. We introduce spherical coordinates $v=r\theta,$ where   $r\ge 0$ and $\theta\in\mathbb{S}^{\nu-1}$ in \eqref{eq-fixa2}, and obtain
\begin{multline}\label{eq-fixa3}
\int  e^{i\phi_3(x,p')}q(x,p',p'';z)dxdp'=Const 
 \int_0^\infty  \Bigl(  \int_{\mathbb{S}^{\nu-1}}  \exp\Bigl(i\sum_{j=1}^{\nu} \lambda_j v_j^2 \Bigr)q(Br\theta+b,p'';z)d\theta\Bigr)r^{\nu-1}dr\\
 \equiv Const 
 \int_0^\infty  c(r;z) r^{\nu-1}dr
\end{multline}
where
\begin{equation}\label{eq-c4}
   c(r;z)=\int_{\mathbb{S}^{\nu-1}}  \exp\Bigl(i\sum_{j=1}^{\nu} \lambda_j v_j^2 \Bigr)q(Br\theta+b,p'';z)d\theta.
\end{equation}
The asymptotics of $c(r;z)$ as $r\to\infty$ can be computed by the stationary phase formula.   To state it, we denote by $\{\mu_l\}$ all the different numbers $\lambda_1,...,\lambda_{\nu}$ in \eqref{eq-phase6},
and let $\{\varkappa_l\}$ be their multiplicities.

\begin{lemma}
We have an asymptotic expansion as $r\to\infty$:
\begin{equation}\label{eq-asympt5}
  c(r;z)\sim r^{\ord A-2z- \nu}\sum_l r^{\varkappa_l}e^{ir^2\mu_l}\sum_{j\ge 0} c_{lj}(z)r^{-j},
\end{equation}
where the coefficients $c_{lj}(z)$ are entire functions of $z$.
\end{lemma}
\begin{proof}
One shows that the stationary points of the phase function in \eqref{eq-c4}  are just the unit length eigenvectors
of the diagonal matrix ${\rm diag}(\lambda_1,...,\lambda_{\nu})$. Hence, the set of stationary points is just the disjoint union of spheres $\mathbb{S}^{\varkappa_l-1}\subset \mathbb{S}^{\nu-1}$ over all distinct eigenvalues. Moreover, these critical submanifolds are nondegenerate.
Thus, application of the stationary phase formula (with large parameter equal to $r^2$) together with the fact that $a(x,p;z)$ is a classical symbol of order $\ord A-2z$, gives us the desired asymptotic expansion:
\begin{equation}\label{eq-asympt6}
  c(r;z)\sim r^{\ord A-2z}\sum_l r^{\varkappa_l-\nu} e^{ir^2\mu_l}\sum_{j\ge 0} c_{lj}(z)r^{-j}.
\end{equation}
\end{proof}
 
We now substitute \eqref{eq-asympt5} in \eqref{eq-fixa3} and obtain that modulo entire functions   $\zeta_{A,g,w}(z)$
in \eqref{eq-fixa1} is equal to the series:
\begin{equation}\label{eq-7}
\zeta_{A,g,w}(z)\equiv Const\sum_l \sum_{j\ge 0}c_{lj}(z) \int_1^\infty r^{\ord A-2z+\varkappa_l-1-j}e^{ir^2\mu_l}dr.
\end{equation}
Integration by parts shows that the integral
$$
\int_1^\infty r^{\ord A-2z+\varkappa_l-1-j}e^{ir^2\mu_l}dr
$$
is an entire function of $z$ unless $\mu_l=0$. Suppose for definiteness that $\mu_1=0$.
Thus, we obtain the following equality modulo entire functions:
\begin{equation}\label{eq-8}
\zeta_{A,g,w}(z)
 \equiv  Const \sum_{j\ge 0}c_{1j}(z) \int_1^\infty r^{\ord A-2z+\varkappa_1-1-j} dr
 \equiv Const \sum_{j\ge 0}\frac{-c_{1j}(z)/2}{z-(\ord A+\varkappa_1-j)/2}.
\end{equation}
It remains to note that $\varkappa_1=2\dim  \mathbb{C}^n_g$ is the real dimension of the fixed point set of $g\in U(n)$ (this follows from \eqref{phase3} and \eqref{eq-diag2}).

This completes the proof of Theorem~\ref{zeta1}.
\end{proof}




\end{document}